\newtheorem{theorem}{Theorem}[section]
\newtheorem{lemma}[theorem]{Lemma}
\newtheorem{prop}[theorem]{Proposition}
\theoremstyle{definition}
\newtheorem{definition}[theorem]{Definition}
\newtheorem{example}[theorem]{Example}
\newtheorem{cor}[theorem]{Corollary}
\newtheorem{thm}[theorem]{Theorem}
\newtheorem{claim}[theorem]{Claim}
\newcommand{\define}[1]{{\bf #1}}
\theoremstyle{remark}
\newtheorem{remark}[theorem]{Remark}
\numberwithin{equation}{section}
\newcommand{\s}{K}
\newcommand{\core}{\mathrm{Core}}
\newcommand{\nO}[1][]{\ifthenelse{\isempty{#1}}{\core_\emptyset(\s)}{\core_\emptyset(#1)}}
\newcommand{\nI}[1][]{\ifthenelse{\equal{#1}{}}{\core_G(\s)}{\core_G(#1)}}
\newcommand{\freegroup}[1][]{\ifthenelse{\equal{#1}{}}{\mathbb{F}_2}{\mathbb{F}_{#1}}}
\newcommand{\Integer}{\mathbb{Z}}
\newcommand{\Z}{\Integer}
\newcommand{\N}{\mathbb{N}}
\newcommand{\R}{\mathbb{R}}
\newcommand{\edge}{\epsilon}%{\mathcal{E}}
\newcommand{\salgebra}[1][]{\ifthenelse{\equal{#1}{}}{\mathcal{T}}{\mathcal{T}_{#1}}}
\newcommand{\unionAlgebra}[1][]{\ifthenelse{\equal{#1}{}}{\sigma\left(\bigcup_{n}\salgebra[C_n]\right)}{\sigma\left(\bigcup_{n}\salgebra[#1]\right)}}
\newcommand{\sinf}{\mathrm{Sym}_*(X)}%{\mbox{FSym}(X)}
\newcommand{\fullsinf}{\mathrm{Sym}(X)}
\newcommand{\n}{N_G(\s)}
\newcommand{\percspace}{2^{\n \backslash G}}
\newcommand{\prob}[1]{Prob\left( #1 \right)}
\newcommand{\norm}[1]{\left\Vert #1 \right\Vert _{\s}}
\newcommand{\rwe}{h_{RW}\left( G,\mu\right)}
\newcommand{\rweq}[1]{h_{RW} \left(G/ #1,\bar{\mu}\right)}
\newcommand{\subg}{Sub_G}
\newcommand{\acts}{\curvearrowright}
\newcommand{\eps}{\varepsilon}
\newcommand{\pref}{\mathrm{pref}}
\newcommand{\shd}{\mathrm{shd}}
\newcommand{\Tail}{\mathcal{T}}
\newcommand{\set}[1]{\left\{ #1 \right\} }
\newcommand{\E}{\mathbb{E}}
\renewcommand{\Pr}{\mathbb{P}}
\begin{document}

% \title[short text for running head]{full title}
\title[Entropy of Intersectional IRSs]{Furstenberg entropy of Intersectional Invariant Random Subgroups}

%    Only \author and \address are required; other information is
%    optional.  Remove any unused author tags.

%    author one information
% \author[short version for running head]{name for top of paper}
\author[Yair Hartman]{Yair Hartman}
%    Address of record for the research reported here
\address{YH: Northwestern University, Evanston, IL}
\email{hartman@math.northwestern.edu}

%    Information for third author
\author[Ariel Yadin]{Ariel Yadin}
\address{AY: Ben-Gurion University of the Negev, Be'er Sheva ISRAEL}
\email{yadina@bgu.ac.il}

\thanks{Y.\ Hartman is supported by the Fulbright Post-Doctoral Scholar Program.
  A.\ Yadin is supported by the Israel Science Foundation (grant no.\ 1346/15). \\ 
  We would like to thank Lewis Bowen and Yair Glasner for helpful discussions.
}
\date{\today}

%    \subjclass is required.
\subjclass[2010]{Primary }

\date{}

\dedicatory{}

%    "Communicated by" -- provide editor's name; required.
\commby{}

%    Abstract is required.
\begin{abstract}
We study the {\em Furstenberg-entropy realization problem} for stationary actions.
It is shown that for finitely supported probability measures on free groups, 
any {\em a-priori} possible entropy value can be realized 
as the entropy of an ergodic stationary action. This generalizes results of Bowen.

The stationary actions we construct arise via invariant random subgroups (IRSs), 
based on ideas of Bowen and Kaimanovich.
We provide a general framework for constructing a continuum of ergodic IRSs for a discrete 
group under some algebraic conditions, which gives a continuum of entropy values.  
Our tools apply for example, for certain extensions of 
the group of finitely supported permutations and lamplighter groups, hence 
establishing full realization results for these groups.

For the free group, we construct the IRSs via a geometric construction of
subgroups, by describing their Schreier graphs.  
The analysis of the entropy of these spaces is obtained by studying the random walk
on the appropriate Schreier graphs.
\end{abstract}

\maketitle

%\tableofcontents

\section{Introduction}

In this paper we study the {\em Furstenberg-entropy realization problem}, which we will describe below.
We propose a new generic construction of {\em invariant random subgroups} and we are able to analyse the Furstenberg-entropy
via the random walk and its properties.
We apply this construction to free groups and lamplighter groups, 
generalizing results of Bowen \cite{bowen2010random} and of the first named author
with Tamuz \cite{hartman2012furstenberg}. In addition, we apply it to the group
of finitely supported infinite 
permutations and to certain extensions of this group, 
establishing a full realization result for this class of groups.
Let us introduce the notions and results precisely.

Let $G$ be a countable discrete group and let $\mu$ be a probability measure on $G$.
We will always assume that $\mu$ is
generating and of finite entropy; that is, the support of 
$\mu$ generates $G$ as a semigroup and the Shannon entropy of $\mu$ is finite, 
$H(\mu) := -\sum_g \mu(g) \log \mu(g) <\infty$.

Suppose that $G$ acts measurably on a standard probability space $(X,\nu)$. 
This induces an action on the probability measure $\nu$ given by $g \nu(\cdot) = \nu(g^{-1} \cdot)$.
The action $G \acts (X,\nu)$ is called \define{$(G,\mu)$-stationary}
if $\sum_{g\in G} \mu(g)g\nu = \nu$. 
Thus, the measure $\nu$ is not necessarily invariant, 
but only ``invariant on average''. 
An important quantity associated to a stationary space is the Furstenberg-entropy, given by
$$ h_\mu(X,\nu) :=-\sum_{g\in G} \mu(g) \int_X \log \frac{dg^{-1}\nu}{d\nu}d\nu(x)$$

It is easy to see that $h_\mu(X,\nu)\geq 0$ and that equality holds if and only if 
the measure $\nu$ is invariant ({\em invariant} means $g \nu = \nu$ for all $g \in G$).
A classical result of Kaimanovich-Vershik 
\cite{kaimanovich1983random} asserts that the Furstenberg-entropy of any
$(G,\mu)$-stationary space is bounded above by the random walk entropy 
$$ \rwe :=\lim_{n\to\infty} \frac{1}{n}H(\mu^n) , $$ 
where $\mu^n$ denotes the 
$n$-th convolution power of $\mu$.
As a first step of classification of the possible 
$(G,\mu)$-stationary actions for a given $\mu$, one may consider the following definitions.
\begin{definition}
We say that $(G,\mu)$ has an \define{entropy gap} if there exists some $c>0$ such that 
whenever $h_\mu(X,\nu)<c$ for an ergodic $(G,\mu)$-stationary action $(X,\nu)$ then $h_\mu(X,\nu)=0$.
Otherwise we say that $(G,\mu)$ has \define{no-gap}.

We say that $(G,\mu)$ admits a \define{full realization} if any number in $[0,\rwe]$ can be realized as the Furstenberg-entropy of some ergodic $(G,\mu)$-stationary space.
\end{definition}

Let us remark that choosing only ergodic actions $G \acts (X,\nu)$ is important. 
Otherwise the definitions are non-interesting, since by taking convex combinations one 
can always realize any value.

The motivation for the gap definition comes from Nevo's result in \cite{nevo2003spectral} showing that any discrete group with property (T) admits an entropy gap, for any $\mu$. The converse however is not true (see {\em e.g.}\ Section 7.2 in \cite{bowen2014generic}) and it is interesting to describe the class of groups 
(and measures) with no-gap. 
%Theorem~\ref{thm:no-gap} below is a tool that may be useful to get a better understanding of this problem.

Our main result is the following.
\begin{theorem}\label{thm:main-free}
Let $\mu$ be a finitely supported, generating probability measure on the free group
$\freegroup[r]$ on $2\leq r < \infty$ generators. 
Then $(\freegroup[r],\mu)$ admits a full realization.
\end{theorem}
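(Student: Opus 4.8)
The plan is to realize a continuum of entropy values by constructing, for each target $t \in [0, \rwe]$, an ergodic $(\freegroup[r],\mu)$-stationary space whose Furstenberg entropy equals $t$. Following the framework announced in the abstract, these spaces will arise from invariant random subgroups: each IRS gives a stationary action on its Poisson-type boundary (or on the associated homogeneous space with a harmonic measure), and the Furstenberg entropy of this action is computed via the random walk on the Schreier graph $\core_G(\s)$ of the random subgroup $\s$. So the first step is to set up the correspondence between IRSs and their realized entropy, and to express $h_\mu$ of the resulting stationary space as an asymptotic quantity of the random walk on the (random) Schreier graph. The two extreme values are already available: $t = 0$ comes from an invariant measure (e.g.\ the trivial subgroup giving the Poisson boundary, or a normal subgroup), and $t = \rwe$ comes from the IRS concentrated on the trivial subgroup, whose action is the full Poisson boundary and saturates the Kaimanovich--Vershik bound. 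The content is filling in the interior.

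Next I would exhibit an explicitly parametrized family of subgroups of $\freegroup[r]$ through their Schreier graphs. The natural geometric picture is to take the $2r$-regular tree (the Cayley graph of $\freegroup[r]$) and describe a family of quotient Schreier graphs depending on a tunable parameter — for instance a density or branching parameter $p \in [0,1]$ controlling how often the random Schreier graph ``escapes to infinity'' versus ``loops back''. The key point is that the random walk on these graphs should have an escape/return behavior that varies continuously with $p$, so that the associated boundary entropy sweeps out an interval. To make the construction yield an honest IRS, I would build the random Schreier graph as an equivariant percolation-type object on the tree, so that the conjugation action of $\freegroup[r]$ on the space of subgroups preserves the law; ergodicity should follow from a tail-triviality or mixing argument for the underlying percolation.

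The heart of the proof is the entropy computation: I would show that the Furstenberg entropy of the stationary space attached to parameter $p$ is given by a formula in terms of the random walk on the Schreier graph — concretely, an integral over the IRS of a Radon--Nikodym cocycle, rewritten (via the Kaimanovich entropy formula / the $0$-$2$ law machinery) as the difference between $\rwe$ and the entropy of the induced walk on the quotient. I would then prove that this quantity is a \emph{continuous} function of the parameter $p$ and attains values throughout $[0, \rwe]$. Continuity is what upgrades a discrete list of examples to a \emph{full} realization filling the whole interval, and combined with the endpoint values $0$ and $\rwe$ and the intermediate value theorem, it closes the argument.

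The main obstacle I expect is precisely this continuity-and-surjectivity step: one must compute or tightly estimate the Furstenberg entropy of these geometrically defined stationary spaces and show the dependence on $p$ is continuous with the correct range. This requires good control of the random walk on the random Schreier graphs — return probabilities, speed, and the asymptotic entropy of the boundary — uniformly enough in $p$ to pass to limits. A secondary difficulty is guaranteeing \emph{ergodicity} of each stationary action (not merely stationarity), since the definitions only reward ergodic realizations; this should reduce to showing the constructed IRS is ergodic under conjugation and that the boundary action inherits ergodicity, but verifying it for the whole family at once demands care.
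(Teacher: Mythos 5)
Your skeleton matches the paper's framework in outline (a percolation-style family of IRSs obtained by intersecting random sets of conjugates, continuity of the entropy in the percolation parameter $p$, and an intermediate-value argument), but the plan breaks at precisely the point you flag as the heart of the matter: the top endpoint. You propose to anchor $t=\rwe$ at the IRS $\delta_{\{e\}}$ (the full Poisson boundary) and connect it to $t=0$ by a continuous one-parameter family. Furstenberg entropy of Poisson bundles is only \emph{upper} semi-continuous in the IRS, and it is genuinely discontinuous at $\delta_{\{e\}}$: by residual finiteness of $\freegroup[r]$ there are normal subgroups $N_n$ of finite index with $\delta_{N_n}\to\delta_{\{e\}}$, all of entropy $0$, while $h_\mu(B_\mu(\delta_{\{e\}}))=\rwe>0$. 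The paper's continuity result (Proposition \ref{prop:continuity}) is correspondingly one-sided: the coupling argument gives $0\le h(p+\eps)-h(p)\le h(\eps)$, and continuity follows \emph{only under the hypothesis} $h(0)=0$, i.e.\ only when the quotient $\freegroup[r]/\nO$ is Liouville; there is no uniform random-walk estimate that makes the family continuous up to the value $\rwe$. Worse, there is an algebraic obstruction (Lemma \ref{lem:amenability-restriction}): a single self-normalizing subgroup $K$ with both $\rweq{\nO}=0$ and $\rweq{\nI}=\rwe$ would force the ambient group to be amenable, which the free group is not. So no single tunable family of the kind you describe can sweep out all of $[0,\rwe]$; your intermediate-value step has nothing to connect to at the top.

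What the paper does instead, and what is missing from your sketch, is a two-scale argument: a \emph{sequence} of subgroups $K_n$, each with locally co-nilpotent conjugacy class (so that $\freegroup[r]/\nO[K_n]$ is nilpotent, hence Liouville by Choquet--Deny--Raugi, giving $h(0)=0$ and a realized interval $[0,\rweq{\nI[K_n]}]$ along each path $p\mapsto\lambda_{p,K_n}$), together with the genuinely hard step of showing $\rweq{\nI[K_n]}\to\rwe$ (``bottom up''). That step cannot be done by abstract continuity; it is where the paper introduces the $(k,n,\alpha)$-fixing condition, builds $K_n$ by gluing a transient Schreier graph of a co-nilpotent normal subgroup (e.g.\ the Heisenberg quotient) onto the leaves of a depth-$n$ tree, and controls $H(Z_1\mid \Tail_{C_n})$ via prefix-stabilization at a stopping time, Fano's inequality, and transience of the glued piece (Lemmas \ref{lem:close entropy}, \ref{lem:growing fixing}, Proposition \ref{prop:trans and fix}); the finite-support hypothesis on $\mu$ is used exactly here. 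A small additional correction: Bowen's formula gives $h_\mu(B_\mu(\lambda))=\lim_t \tfrac1t\int H(\s Z_t)\,d\lambda(\s)$, the entropy of the induced walk on the coset space itself, not ``$\rwe$ minus'' that quantity.
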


As will be explained below (see Remark \ref{rem:finite supp}), 
the restriction to finitely supported measures $\mu$ 
is technical and it seems that the result should hold in a wider generality.
The case where $\mu$ is the simple random walk (uniform measure on the generators and their inverses) was done by Bowen \cite{bowen2010random}. We elaborate on the similarity and differences with Bowen's result below.

%In that work he introduce the idea to use Invariant 
%Ransom Subgroups (IRS) to solve the realization problem. While we are also using 
%IRSs for our realizations results, we use a different approach to construct the
%IRSs in questions, which allows us to prove the result in a greater generality,
%for any finitely supported random walk. 
%
%We provide a generic way to construct IRSs on which the entropy varies continuously,
%and in particular, it gives a very short proof of the following.

Our next result provides a solution for lamplighter groups.

\begin{theorem}\label{thm:main-lamplighter}
Let $G=\oplus_B L\rtimes B$ be a lamplighter group where $L$ and $B$ are some non trivial countable discrete groups.
Let $\mu$ be a generating probability measure with finite entropy on $G$ and
denote  by  $\bar{\mu}$ the projection of $\mu$ onto the quotient $B \cong G / (\oplus_B L \rtimes \{e\})$.

Then, whenever $(B,\bar{\mu})$ is Liouville, $(G,\mu)$ admits a full realization.
\end{theorem}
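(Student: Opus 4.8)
The plan is to realize every value in $[0,\rwe]$ by a one-parameter family of ergodic stationary actions attached to \emph{intersectional} invariant random subgroups supported inside the lamp group, and then to invoke the intermediate value theorem once the entropy is shown to depend continuously on the parameter. Write $N:=\oplus_B L$ for the normal lamp subgroup, so $\pi\colon G\to B$ is the quotient and $B$ acts on the index set $B$ by left translation. For $A\subseteq B$ set $N_A:=\set{f\in N:\mathrm{supp}(f)\subseteq A}$; a direct computation shows conjugation by $(f,b)\in G$ carries $N_A$ to $N_{bA}$. Hence, if $A_p\subseteq B$ is a Bernoulli$(p)$ random subset (each site retained independently with probability $p$), then the law $\eta_p$ of $N_{A_p}$ is conjugation-invariant, i.e.\ an IRS. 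This is the \emph{intersectional} form promised by the title: for the fixed co-atomic subgroup $H=\set{f\in N:f(o)=e}$ one has $N_{A_p}=\bigcap_{b\notin A_p} bHb^{-1}$. Taking $B$ infinite (the remaining case must be treated separately and is not the essential one), the Bernoulli field is mixing under the $B$-translation action, so $\eta_p$ is an ergodic IRS, and its canonical Poisson bundle $(X_p,\nu_p)$ — the fibering of the Poisson boundary of the walk on the random Schreier graph $N_{A_p}\backslash G$ over $\eta_p$ — is an ergodic $(G,\mu)$-stationary action. Write $h(p):=h_\mu(X_p,\nu_p)$.

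The endpoints are forced, and this is where the hypothesis enters. At $p=0$ we have $N_\emptyset=\set{e}$, so $(X_0,\nu_0)$ is the Poisson boundary of $(G,\mu)$ and $h(0)=\rwe$ by Kaimanovich--Vershik. At $p=1$ we have $N_B=N$, the quotient is $G/N\cong B$, and $(X_1,\nu_1)$ is the Poisson boundary of $(B,\bar\mu)$; the Liouville hypothesis makes this trivial, so $h(1)=0$. Note that Liouvilleness of the base does not force $\rwe=0$: a transient Liouville base such as $\Z^3$ still produces positive entropy carried entirely by the stabilizing lamp configuration, which is precisely the regime where the statement has content.

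It remains to show $h\colon[0,1]\to[0,\rwe]$ is continuous, after which the intermediate value theorem delivers every value in $[0,\rwe]$. I would argue via a monotone coupling together with an equidistribution estimate. Coupling the Bernoulli fields so that $A_p\subseteq A_{p'}$ for $p\le p'$ exhibits $(X_{p'},\nu_{p'})$ as a $G$-factor of $(X_p,\nu_p)$, so $h$ is nonincreasing. To upgrade this to Lipschitz continuity, observe that passing from $p$ to $p'$ additionally forgets the lamps on $A_{p'}\setminus A_p$, a translation-invariant set of density $p'-p$; since the base is Liouville the entire boundary entropy is carried by the final lamp configuration, and by translation-invariance this entropy is \emph{uniformly spread} over the sites of $B$. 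Applying the ergodic theorem to the per-site contribution of the boundary entropy functional, and using that the total $\rwe$ is finite (as $H(\mu)<\infty$), should yield $|h(p)-h(p')|\le C\,|p-p'|$.

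The genuine difficulty is this last step: making rigorous the heuristic that the entropy is spread uniformly over lamp sites, so that thinning by a small density costs little. The final lamp values at distinct sites are correlated through the common trajectory, so the entropy is not literally additive over sites and the naive scaling $h(p)=(1-p)\rwe$ need not hold. I expect continuity to follow from the general continuity mechanism for Poisson bundles over intersectional IRSs, specialized to the lamp coordinates; ergodicity of each $(X_p,\nu_p)$ then poses no further obstacle, reducing — because the base boundary is trivial — to the already-established ergodicity of the IRS $\eta_p$.
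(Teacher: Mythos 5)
Your family of IRSs is, up to the reparametrization $p \mapsto 1-p$, identical to the paper's: your $H=\{f\in N : f(o)=e\}$ is the paper's chosen subgroup $K=\oplus_{B\setminus\{e\}}L\rtimes\{e\}$, your $N_{A_p}=\bigcap_{b\notin A_p}bHb^{-1}$ is the intersectional IRS $\lambda_{1-p,K}$ of Claim~\ref{claim-definition-of-path}, and your endpoint identifications ($N_\emptyset=\{e\}$, giving $\rwe$; $N_B=N=\core_\emptyset(K)$, giving $0$) match the paper's computation that $\core_G(K)=\{e\}$ while $\oplus_B L\rtimes\{e\}\lhd\core_\emptyset(K)$, so that $G/\core_\emptyset(K)$ is a quotient of the Liouville $(B,\bar\mu)$. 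Ergodicity via Bowen's correspondence is also as in the paper (and your restriction to infinite $B$ matches the standing requirement $|K^G|=\infty$ of the intersectional construction). So everything reduces, as you say, to continuity of $h$ --- and there lies the genuine gap.

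Your proposed continuity argument (a Lipschitz bound $|h(p)-h(p')|\le C\,|p-p'|$ from ``uniformly spread per-site entropy'' plus an ergodic theorem) is not a proof and is the wrong mechanism: as you yourself concede, the terminal lamp values at distinct sites are correlated through the common trajectory, no per-site entropy-density statement is available, and nothing suggests $h$ is Lipschitz --- nor is that needed. You have also misplaced where the Liouville hypothesis acts: it is not merely an endpoint value, it is what forces continuity. The paper's Proposition~\ref{prop:continuity} runs your monotone coupling quantitatively: realizing $\Theta_p=\{U_\theta\le p\}$ and $\Theta_\eps=\{p<U_\theta\le p+\eps\}$ on a single i.i.d.\ uniform field, one has $C_q=C_p\cap C_\eps$, and since $C_qZ_n$ is determined by the pair $(C_pZ_n,C_\eps Z_n)$ while $C_pZ_n$ is determined by $C_qZ_n$, it follows that $H(C_pZ_n)\le H(C_qZ_n)\le H(C_pZ_n)+H(C_\eps Z_n)$, hence $0\le h(q)-h(p)\le h(\eps)$ in the paper's parametrization. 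The missing input is then Bowen's upper semicontinuity (Corollary~\ref{cor:entropy_semi_cont}): since $\lambda_\eps\to\delta_{\core_\emptyset(K)}$ weak* (Lemma~\ref{lem:nO}) and the value at $\delta_{\core_\emptyset(K)}$ is the random walk entropy of $G/\core_\emptyset(K)$, which vanishes precisely by Liouvilleness of the base, one gets $h(\eps)\to0$, and the subadditive bound spreads this modulus over all of $[0,1]$. In your parametrization this reads $0\le h(p)-h(p')\le h\bigl(1-(p'-p)\bigr)$ for $p\le p'$: continuity at your endpoint $p=1$ --- exactly where the hypothesis on $(B,\bar\mu)$ is consumed --- already yields continuity everywhere, and the intermediate value theorem finishes as in Corollary~\ref{cor:single-subgroup-mu}. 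Replace your heuristic step with this coupling-plus-semicontinuity argument (and note your ``$G$-factor'' claim for the coupled bundles is cleaner when phrased at the level of the Shannon entropies $H(\cdot\,Z_n)$, as above); the Lipschitz assertion should simply be dropped, since only continuity is true or required.
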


Here, {\em Liouville} is just the property that the random walk entropy is zero.
For example, if the base group $B$ is virtually nilpotent (equivalently, of polynomial growth in the finitely generated context),
then $B$ is Liouville for any measure $\mu$ (see Theorem \ref{fact:nilpotent-are-Liouville} below).

%Since groups of polynomial growth are Liouville for any measure (with finite entropy),
%we immediately obtain the following.
%
%\begin{cor}
%Let $G = \oplus_B L \rtimes B$ be a lamplighter group with a finitely generated base group $B$
%of polynomial growth.  Then for any generating probability measure $\mu$ on $G$ of finite entropy,
%we have that $(G,\mu)$ admits a full realization.
%\end{cor}

In \cite{hartman2012furstenberg}, the first named author and Tamuz prove that
a dense set of values can be realized, under an additional technical condition on the random walk.
%We now turn to describe the main ingredients and tools that we are using to prove these theorems. We conclude the introduction with a survey of related results. 

In addition, our tools apply also to nilpotent extensions of
$\sinf$, the group of finitely supported permutations of an infinite countable set $X$. 
For simplicity, we state here the result for $\sinf$ itself, and refer the reader to Section \ref{sec:lamplighter} 
for the precise statement and proof of the general result.

\begin{theorem}\label{thm:main-sinf}
Let $\mu$ be a generating probability measure with finite entropy on $\sinf$. 
Then $(\sinf,\mu)$ admits a full realization.
\end{theorem}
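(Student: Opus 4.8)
The plan is to realize the whole interval $[0,\rwe]$ by constructing a one–parameter family of ergodic invariant random subgroups (IRSs) of $G=\sinf$ as pointwise stabilizers of random colorings of $X$, and to tune the Furstenberg entropy of the associated stationary actions by varying the law of the coloring. Fix the alphabet $[0,1]$ and, for $t\in[0,1]$, let $\mathbf{p}_t = t\,\delta_0 + (1-t)\,\mathrm{Leb}$ be a probability measure on $[0,1]$. Let $c\colon X\to[0,1]$ be an i.i.d.\ coloring with marginal $\mathbf{p}_t$, and set $H_t := \mathrm{Stab}(c) = \set{g\in G : c\circ g^{-1}=c}$, the finitely supported permutations preserving $c$. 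Since the product measure $\mathbf{p}_t^{\otimes X}$ is $G$–invariant and $\mathrm{Stab}(gc)=g\,\mathrm{Stab}(c)\,g^{-1}$, the push–forward of its law under $c\mapsto \mathrm{Stab}(c)$ is conjugation invariant, so $H_t$ is an IRS. Moreover, by the Hewitt–Savage zero–one law the action $G\acts([0,1]^X,\mathbf{p}_t^{\otimes X})$ is ergodic, hence $H_t$ is an ergodic IRS, and the stationary action it induces (via the Bowen–Kaimanovich construction used in the body of the paper) is ergodic as well.

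Next I would identify the two extreme regimes. For $t=1$ the coloring is a.s.\ constant, so $H_1=G$; the Schreier graph $G/H_1$ is a single point and the induced random walk is trivial, giving Furstenberg entropy $0$. For $t=0$ the coloring is Lebesgue, hence a.s.\ injective, so $H_0=\set{e}$; here $G/H_0$ is the full Cayley graph and the induced walk is the original $\mu$–random walk, so the associated action is the Poisson boundary of $(G,\mu)$ and its Furstenberg entropy equals $\rwe$. (Both endpoints are in any case realizable directly, by the trivial action and by the Poisson boundary.) For intermediate $t$ the coloring has a single infinite indistinguishable class $A:=c^{-1}(0)$, a random subset of density $t$, while $X\setminus A$ is injectively labeled; thus $H_t=\mathrm{Sym}_*(A)$, and the state of the induced walk on $G/H_t$ records the moving subset $A$ together with the exact labels carried by $X\setminus A$, forgetting only the internal arrangement of $A$.

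The core computation is to express the Furstenberg entropy $h(t)$ of the induced stationary action as the boundary entropy $\lim_n \frac1n H(X_n)$ of the induced walk $X_n$ on the random Schreier graph $G/H_t$, and to show that $t\mapsto h(t)$ is continuous with $h(0^+)=\rwe$ and $h(1^-)=0$. Monotonicity is natural from a coupling: realizing the colorings for $t'<t$ as a refinement of those for $t$ gives $H_{t'}\subseteq H_t$ along the coupling, so the finer Schreier graph retains more information and a data–processing argument yields $h(t')\ge h(t)$. Granting continuity, the Intermediate Value Theorem then shows that $\set{h(t):t\in(0,1)}$ is an interval accumulating at $0$ and at $\rwe$, and together with the two endpoints this realizes every value in $[0,\rwe]$. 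Note that only $H(\mu)<\infty$ (and not finite support of $\mu$) is used, since the coloring construction is insensitive to the support of $\mu$.

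The main obstacle I anticipate is precisely this entropy analysis: establishing the clean identity between the Furstenberg entropy of the IRS–action and the asymptotic entropy of the induced walk on $G/H_t$, and—harder—proving continuity of $h(t)$ in $t$, in particular as $t\to0$ where $H_t$ degenerates from an infinite group to $\set{e}$ and the pointed Schreier graph changes discontinuously. I expect this to require a quantitative coupling of the induced walks for nearby parameters, controlling the rate at which information about the indistinguishable class $A$ is lost, so as to upgrade the monotone, a.s.\ constant (by ergodicity of the IRS) entropy into a genuinely continuous function of $t$.
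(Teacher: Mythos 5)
Your one-parameter family is, up to the reparametrization $p=1-t$, \emph{identical} to the paper's intersectional family from Example \ref{example:sinfity}: since a finitely supported permutation preserves your coloring $c$ if and only if it fixes $X\setminus A$ pointwise (where $A=c^{-1}(0)$), you have $H_t=\mathrm{Sym}_*(A)=\bigcap_{x\in X\setminus A}\mathrm{stab}(x)$, which is exactly the random core $\core_\Theta(K)$ of the conjugacy class of $K=\mathrm{stab}(x_0)$ over the Bernoulli-$p$ set $\Theta=X\setminus A$ (Claim \ref{claim-definition-of-path}). Your endpoint identifications are correct and match the paper's computation $\core_G(K)=\{e\}$ (transitivity) and $\core_\emptyset(K)=\sinf$ (finite support), and the identity between the Furstenberg entropy of the induced stationary action and the asymptotic entropy of the walk on the coset space is not something you need to reprove: it is Bowen's Theorem \ref{thm:Bowen}, citable as such.

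The genuine gap is the one you yourself flag and then defer: continuity of $h(t)$, which you invoke via ``granting continuity'' before applying the intermediate value theorem. This is the entire analytic content of the theorem, and your coupling only yields monotonicity, which does not preclude jump discontinuities, so as written your argument realizes only the (possibly proper) range of a monotone function. The missing idea is the Bernoulli superposition coupling of Proposition \ref{prop:continuity}: for $q=p+\eps$, realize $\Theta_q=\Theta_p\cup\Theta_\eps$ with \emph{independent} Bernoulli-$p$ and Bernoulli-$\eps$ pieces, so that $C_q=C_p\cap C_\eps$; then $C_qZ_n$ is determined by the pair $(C_pZ_n,C_\eps Z_n)$ while $C_pZ_n$ is determined by $C_qZ_n$, giving the two-sided bound $0\le h(q)-h(p)\le h(\eps)$. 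Combined with Bowen's upper semicontinuity of the entropy in the IRS (Corollary \ref{cor:entropy_semi_cont}) and the weak* convergence $\lambda_\eps\to\delta_{\core_\emptyset(K)}$ of Lemma \ref{lem:nO}, one gets $\limsup_{\eps\to0}h(\eps)\le h(0)=0$ precisely because $\core_\emptyset(K)=\sinf$ makes the limiting quotient trivial; continuity on all of $[0,1]$ follows. Note this also dissolves the difficulty you anticipate at the degenerate end where $H_t$ collapses to $\{e\}$: in the paper's parametrization, $h(1)-h(p)\le h(1-p)\to 0$, so no quantitative control of how ``information about $A$ is lost'' is ever needed --- the subadditive increment bound does all the work.
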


Following the basic approach of Bowen in \cite{bowen2010random}, our realization results use only a specific type of stationary actions known as {\em Poisson bundles}
that are constructed out of invariant random subgroups. 
An \define{invariant random subgroup}, or \define{IRS}, of $G$, is random subgroup whose law is invariant 
to the natural $G$-action by conjugation.
IRSs serve as a stochastic generalization of a normal subgroup and arise naturally as the stabilizers of probability measure preserving actions.
In fact, any IRS is obtained in that way (see Proposition 14 in \cite{abert2014kesten} for the discrete case, and \cite{abert2012growth} for the general case).
Since the term IRS was coined in \cite{abert2011measurable}, they have featured in many works and inspired much research. 

In this paper we construct explicit families of 
IRSs for the free group, the lamplighter group and $\sinf$. For further discussion
about the structure of IRSs on these groups the reader is referred to the works of
Bowen~\cite{bowen2015invariant} for the free group, Bowen, Grigorchuk and Kravchenko~\cite{bowen2012invariantlamp} for lamplighter groups and Vershik~\cite{vershik2012totally}
for the $\sinf$ case.

Given an ergodic IRS of a group $G$, one can construct an ergodic stationary space which
is called a {\em Poisson bundle}. These were introduced by Kaimanovich in \cite{kaimanovich2005amenability} and were
further studied by Bowen in \cite{bowen2010random}. 
In particular, it was shown in \cite{bowen2010random} that the Furstenberg-entropy of a Poisson bundle constructed using an IRS, equals the random walk entropy of the random walk on the associated coset space (this will be made precise below). 
Hence our main results can be interpreted as realization results for the random walk entropies of coset spaces associated with ergodic IRSs.

Given a subgroup with an infinite conjugacy class,
we develop a general tool to construct a family of ergodic IRSs, that we call
\define{intersectional IRSs}. 
This family of IRSs is obtained by randomly intersecting subgroups from the conjugacy class (see Section \ref{sec:percolation-IRS}). 
Furthermore, we prove that under a certain 
condition related to the measure $\mu$ and the conjugacy class, 
the random walk entropy varies continuously along this family of IRSs (see Section~\ref{sec:entropy-is-cont}). 
In fact, in some cases we may find a specific conjugacy class 
that satisfies the above mentioned condition for any measure $\mu$.  An example 
of such, is a conjugacy class that we call ``locally co-nilpotent''. 
These are conjugacy classes such that that their associated Schreier graphs satisfy some 
geometric condition which we describe in Section \ref{scn:local}.

Using such families of IRSs we get our main tool for constructing a continuum of entropy values.
\begin{prop}\label{thm:no-gap}
Let $G$ be a countable discrete group. 
Assume that there exists a conjugacy class of subgroups which is locally co-nilpotent.
(See just before Corollary \ref{cor:single-subgroup} and 
Section \ref{scn:local} for the precise definition of locally co-nilpotent.)
Denote its normal core by $N \vartriangleleft G$.

Then, for any finite entropy, generating, probability measure $\mu$ on $G$, if $G/N$ has positive
$\mu$-random walk entropy, then $(G,\mu)$ has no-gap.

Furthermore, if the normal core $N$ is trivial, then $(G,\mu)$ admits a full realization.
\end{prop}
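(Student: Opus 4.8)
The plan is to assemble the machinery developed in Sections \ref{sec:percolation-IRS} and \ref{sec:entropy-is-cont} and then invoke the intermediate value theorem along a one-parameter family of intersectional IRSs. First I would fix a subgroup $K$ whose conjugacy class is locally co-nilpotent, and apply the intersectional construction of Section \ref{sec:percolation-IRS} to produce, for each retention parameter $p \in [0,1]$, an IRS $\nu_p$ obtained by retaining each conjugate of $K$ independently with probability $p$ and intersecting the retained subgroups. By the results established around Corollary \ref{cor:single-subgroup}, each $\nu_p$ is a genuine ergodic IRS, so its Poisson bundle is an ergodic $(G,\mu)$-stationary space; by the identification of Bowen \cite{bowen2010random}, the Furstenberg entropy of this bundle equals the random walk entropy of the random walk on the associated coset space. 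Writing $h(p)$ for this value, I obtain a map $p \mapsto h(p)$ on $[0,1]$ whose range consists of Furstenberg entropies genuinely realized by ergodic actions.

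Next I would read off the two endpoints. At $p=0$ no conjugate is retained, the empty intersection is all of $G$, the IRS is the point mass at $G$, the coset space is a single point, and hence $h(0)=0$. At $p=1$ every conjugate is retained, so the intersection is exactly $\bigcap_{g\in G} gKg^{-1} = N$, the normal core; the IRS is the point mass at $N$, the coset space is $G/N$, and $h(1)=\rweq{N}$.

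The crucial input is the continuity theorem of Section \ref{sec:entropy-is-cont}: because the conjugacy class is locally co-nilpotent, the condition relating $\mu$ to the class holds for \emph{every} finite entropy generating $\mu$, and so $p \mapsto h(p)$ is continuous on $[0,1]$. Granting this, the main obstacle remaining within the present argument is the behaviour at the endpoint $p=1$, namely verifying that the limiting IRS is indeed the point mass on the normal core and that no entropy is lost in the limit $p \to 1$, so that $h(1)=\rweq{N}$ is attained and not merely approached. Assuming $G/N$ has positive random walk entropy, we then have $h(1)=\rweq{N}>0$, so by the intermediate value theorem the continuous function $h$ attains every value in $[0,\rweq{N}]$; in particular it attains arbitrarily small positive values, which is precisely the no-gap property.

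Finally, for the full realization statement, suppose the normal core $N$ is trivial. Then $G/N = G$ and $h(1)=\rweq{N}=\rwe$, the full random walk entropy of $(G,\mu)$. By the Kaimanovich--Vershik bound recalled in the introduction, every ergodic $(G,\mu)$-stationary space has Furstenberg entropy lying in $[0,\rwe]$, so this interval is the entire a-priori range. Since $h$ is continuous with $h(0)=0$ and $h(1)=\rwe$, the intermediate value theorem shows that every value in $[0,\rwe]$ is realized by some $\nu_p$, establishing full realization.
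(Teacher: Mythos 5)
Your overall strategy (the intersectional family $\lambda_{p,K}$, Bowen's entropy formula for Poisson bundles, continuity in $p$, and the intermediate value theorem) is indeed the paper's, but there is a genuine gap at the endpoint $p=0$, and it sits exactly where the hypothesis of the proposition does its work. You claim that at $p=0$ ``no conjugate is retained, the empty intersection is all of $G$, the IRS is the point mass at $G$, hence $h(0)=0$.'' This is not how the family degenerates: for every $p>0$ the Bernoulli-$p$ subset $\Theta \subset \n \backslash G$ is almost surely \emph{infinite}, and Lemma \ref{lem:nO} shows that $\lambda_{p,K} \to \delta_{\nO}$ as $p \to 0$, where $\nO = \{ g \ : \ \norm{g} < \infty \}$ is the subgroup of elements lying in all but finitely many conjugates of $K$ --- in general a proper normal subgroup of $G$. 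Accordingly $h(0)$ must be defined as $\rweq{\nO}$, and $h(0)=0$ is \emph{not} automatic: it holds here only because local co-nilpotency means $G/\nO$ is nilpotent, hence Liouville for every finite entropy generating $\mu$ by the Choquet--Deny--Raugi theorem (Theorem \ref{fact:nilpotent-are-Liouville}). Moreover, continuity of $h$ at $0$ is not separate from this vanishing: the entropy is only upper semi-continuous in the IRS (Corollary \ref{cor:entropy_semi_cont}), and Proposition \ref{prop:continuity} derives continuity on $[0,1]$ \emph{from} the hypothesis $h(0)=0$ via the coupling bound $0 \leq h(q)-h(p) \leq h(q-p)$. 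Your proposal invokes local co-nilpotency for continuity while simultaneously deriving $h(0)=0$ by the (incorrect) point-mass-at-$G$ computation, so the dependency structure is garbled. A sanity check that the endpoint computation cannot be right: if $h(0)=0$ held for free, your argument would prove no-gap for \emph{any} group possessing an infinite conjugacy class with $\rweq{\nI}>0$, contradicting Nevo's entropy gap for property (T) groups cited in the introduction.

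By contrast, the endpoint $p=1$, which you flag as ``the main obstacle remaining,'' is the unproblematic one: at $p=1$ the Bernoulli measure is the point mass on the full coset set, so $\lambda_{1,K} = \delta_{\nI}$ exactly (no limit is involved), the Poisson bundle over $\delta_{\nI}$ has entropy $\rweq{\nI}$ by Bowen's formula, and continuity at $1$ follows from the same coupling inequality with $q=1$. With the $p=0$ endpoint repaired as above, your intermediate value argument matches the paper's proof (Corollaries \ref{cor:single-subgroup-mu} and \ref{cor:single-subgroup}): continuity with $h(0)=0$ realizes the interval $[0,\rweq{\nI}]$, which gives no-gap when $\rweq{\nI}>0$, and full realization when $N=\nI$ is trivial, since then $\rweq{\nI}=\rwe$ and the Kaimanovich--Vershik bound shows $[0,\rwe]$ is the entire a-priori range.
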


Note that this proposition reveals a structural condition on $G$ and its subgroups that allows to conclude realization results for many different measures $\mu$ at the same time.
Furthermore, these realization results do not depend on the description of the Furstenberg-Poisson boundary.

By constructing conjugacy classes in lamplighter groups and in $\sinf$ with the relevant properties, we obtain Theorems~\ref{thm:main-lamplighter} and~\ref{thm:main-sinf}.
However, the case of the free group is more complicated. 
In Section~\ref{sec:gluing-graphs} we show how to construct many subgroups of the
free group with a conjugacy class which is locally co-nilpotent, and such that $G/N$ has positive 
random walk entropy where $N$ is the normal core of the conjugacy class. This is enough to realize an interval of entropy values around $0$, showing in particular, that the free group admits no-gap for any $\mu$ with finite entropy. The fact that the free group admits no-gap for any finite first moment $\mu$ was
proved in~\cite{hartman2012furstenberg}.

\begin{thm}
\label{thm:free group no gap}
Let $\mu$ be a finite entropy generating probability measure on the free group
$\freegroup[r]$ on $2\leq r < \infty$ generators. 
Then $(\freegroup[r],\mu)$ has an interval of the form $[0,c]$ of realizable values.
\end{thm}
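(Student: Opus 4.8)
The plan is to derive this entirely from Proposition \ref{thm:no-gap}: it suffices to exhibit a single conjugacy class of subgroups of $\freegroup[r]$ that is locally co-nilpotent and whose normal core $N$ satisfies the positivity condition $\rweq{N} > 0$. Granting such a class, I would run the intersectional-IRS machinery on it. The construction of Section \ref{sec:percolation-IRS} produces a family of ergodic IRSs parametrized by a retention probability $p \in [0,1]$: at $p = 0$ the random subgroup is all of $\freegroup[r]$, giving the trivial (entropy-zero) Poisson bundle, while as $p \to 1$ the random intersection of conjugates concentrates on the normal core $N$, giving a Poisson bundle of entropy approaching $\rweq{N}$. The local co-nilpotency hypothesis is precisely what guarantees, via the continuity result of Section \ref{sec:entropy-is-cont}, that the Furstenberg entropy of these Poisson bundles varies continuously in $p$. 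Since it runs from $0$ up to a positive value bounded by $\rweq{N}$, and each member of the family is an ergodic stationary action, the intermediate value theorem yields a full interval $[0,c]$ of realizable values, with $c \le \rweq{N}$. This is exactly the argument underlying Proposition \ref{thm:no-gap}, which simultaneously gives the no-gap conclusion.

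The whole difficulty is therefore concentrated in producing the conjugacy class, and here $\freegroup[r]$ genuinely differs from $\sinf$ and the lamplighter groups, where one can point to natural algebraic subgroups. Instead I would construct the subgroup $H \le \freegroup[r]$ \emph{geometrically}, by prescribing its Schreier graph on the coset space $H \backslash \freegroup[r]$ through the gluing procedure of Section \ref{sec:gluing-graphs}. This Schreier graph is a $2r$-regular labelled graph, and local co-nilpotency (Section \ref{scn:local}) is a condition on how the graph looks from a typical base coset. The idea is to glue finite building blocks so that, viewed from any fixed vertex, the graph eventually settles into the prescribed co-nilpotent local pattern, while globally the graph retains enough freeness to keep the quotient large.

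The main obstacle is to secure local co-nilpotency and the positivity $\rweq{N} > 0$ \emph{at the same time}, as the two requirements pull in opposite directions. Local co-nilpotency forces the Schreier graph to be highly structured far from the base, which tends to introduce many relations into the quotient and can collapse $\freegroup[r]/N$; conversely, positivity of the random-walk entropy demands that $\freegroup[r]/N$ remain rich — in particular non-amenable, which for a finitely generated group forces $\rweq{N} > 0$ for every finite-entropy generating $\mu$ (a Liouville quotient would have to be amenable). I would resolve the tension by tracking the normal core $N = \bigcap_{g} gHg^{-1}$ explicitly throughout the gluing, arranging that the faithful action of $\freegroup[r]/N$ on the Schreier graph keeps enough independent directions — for instance that some generator still acts without short relations on a positive-density set of cosets — so that $\freegroup[r]/N$ contains a non-abelian free subgroup and is thus non-amenable. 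This keeps $\rweq{N} > 0$, and combined with the first paragraph completes the proof, with $c$ equal to the supremum of the entropies attained along the intersectional family.
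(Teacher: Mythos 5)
Your first paragraph correctly reproduces the paper's first half: run the intersectional family $\lambda_{p,K}$ on a locally co-nilpotent conjugacy class, use Choquet--Deny (Theorem \ref{fact:nilpotent-are-Liouville}) to get $h(0)=0$, invoke the continuity of Proposition \ref{prop:continuity}, and realize $[0,\rweq{\nI}]$. One slip there: as $p\to 0$ the IRS converges to $\delta_{\nO}$, not to $\delta_{\freegroup[r]}$; in the free group $\nO$ is a \emph{proper} co-nilpotent normal subgroup, and $h(0)=0$ comes from nilpotency of the quotient, not from $\nO$ being everything. The genuine gap is in your mechanism for positivity, $\rweq{\nI}>0$. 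You propose to arrange that $\freegroup[r]/\nI$ contains a non-abelian free subgroup, hence is non-amenable. This is impossible in combination with local co-nilpotency for the subgroups at hand: the argument of Lemma \ref{lem:amenability-restriction} shows that every element of $\nO$ acts with finite support on the conjugacy class $K^G$ (membership in $K^\theta$ forces normalization of $K^\theta$), so for a self-normalizing $K$ the group $\nO/\nI$ embeds into the amenable group $\sinf$; since $\freegroup[r]/\nO$ is nilpotent, $\freegroup[r]/\nI$ is then amenable-by-nilpotent, hence amenable, and contains no free subgroup. The subgroups produced by the gluing construction of Section \ref{sec:gluing-graphs} that you invoke are exactly of this self-normalizing kind, and the paper explicitly flags this obstruction right after Lemma \ref{lem:amenability-restriction}. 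Your premise that non-amenability is ``demanded'' by positivity is also backwards: amenable quotients can perfectly well be non-Liouville, and that is what must be exploited here.

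The paper's actual proof gets positivity from an amenable but non-Liouville quotient, and this is the idea missing from your proposal. Take $N\lhd\freegroup$ co-nilpotent whose Schreier graph $\Lambda$ is transient --- concretely, the kernel of the projection of $\freegroup$ onto the Heisenberg group $\mathbb{H}_3(\Z)$ (growth at least cubic, hence transient), or onto $\Z^r$ when $r>2$ --- and form $K_n$ from the glued graph $\Gamma_n(\Lambda,\edge)$. Lemma \ref{lem:locally co-nilpotent} gives local co-nilpotency of $K_n$ (this step needs Proposition \ref{prop:local}, i.e.\ that $K_n$ is locally-$(\Lambda,\mathcal{Z}_a,\ldots)$ with each local model containing a fixed term of the lower central series; your sketch of ``settling into the prescribed local pattern'' gestures at this but never supplies the argument). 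Then transience of the glued copies of $\Lambda$ implies that, for a depth-$n$ vertex $v$, the function $h_v(g)=\Pr_g[\exists\, t_0 : \forall\, t>t_0,\ \tilde Z_t\in\shd(v)]$ is a non-constant bounded harmonic function on the Schreier graph, and the Kaimanovich--Vershik entropy criterion yields $\rweq{\nI[K_n]}>0$ for \emph{every} finite-entropy generating $\mu$ --- with no finite-support assumption and no use of the fixing machinery, which is reserved for the full-realization Theorem \ref{thm:main-free}. Without replacing your non-amenability step by an argument of this kind, your proof of positivity fails and the theorem does not follow.
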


However, there is no single (self-normalizing) subgroup of the free group which has
a locally co-nilpotent conjugacy class and a trivial normal core (see Lemma~\ref{lem:amenability-restriction}).
The importance of the requirement that the normal core $N$ is trivial, is to ensure that 
the random walk entropy on $G/N$ is equal to the full random walk entropy.
Hence, to prove a full realization for the free group, we approximate this property. 
That is, we find a sequence of conjugacy classes which are all locally co-nilpotent, and such that 
their normal cores satisfy that the random walk entropy on the quotients tends to $\rwe$. 

Finding such a sequence is quite delicate, and proving it is a bit technical. 
The difficulty comes from the fact that entropy is semi-continuous, but in the ``wrong direction'' (see Theorem \ref{thm:Bowen} and the discussion after). 
It is for this analysis that we need to assume that $\mu$ is finitely supported, although it is plausible that this condition can be relaxed.
By that we mean that the very same conjugacy classes we construct in this paper might satisfy
that random walk entropy on the quotients by their normal cores tends to $\rwe$, even for infinitely supported $\mu$ as well (perhaps under some further regularity conditions, such as some moment conditions).

\subsection{Intersectional IRSs}

%Let us detail a bit of the main ideas of our constructions.

We now explain how to construct intersectional IRSs.

Let $K \leq G$ be a subgroup with an infinite conjugacy class $|K^G|=\infty$
(we use the notation $K^g = g^{-1} K g$ for conjugation). 
We may index this conjugacy class by the integers:  $K^G = (K_z)_{z \in \Z}$.  %$\ldots, K_{-1} , K_0 , K_1, \ldots$.
It is readily checked that the $G$-action (by conjugation) on the conjugacy class $K^G$ is translated to an 
action by permutations on $\Z$.  Namely, for $g \in G , z \in \Z$, we define $g.z$ to be the unique integer 
such that $(K_z)^g = K_{g.z}$.  Thus, $G$ also acts on subsets $S \subset \Z$ element-wise.

Given a non-empty subset $S \subset \Z$, 
we may associate to $S$ a subgroup $\core_S(K)= \bigcap_{s \in S} K_s$.
If $S$ is chosen to be random, with a law invariant to the $G$-action, 
then the random subgroup $\core_S(K)$ is an IRS.  
Moreover, it is quite simple to get such a law:  for a parameter $p \in (0,1]$ let $S$ be 
$p$-percolation on $\Z$; that is, every element $z \in \Z$ belongs to $S$ with probability $p$ independently.

There are few issues with this construction.

\begin{itemize}
\item It is not immediately clear what should be associated to the ``empty intersection'' when $S = \emptyset$.
In Section~\ref{scn:empty-core} we give an apropriate definition, which follows from a construction
of an associated ``semi-norm'' on the group $G$. This will turn out
to be a normal subgroup of $G$ that we denote by $\nO$.

\item Note that $p=1$ corresponds 
to the non-random subgroup 
$\nI$, which is just the normal core of the conjugacy class $K^G$. 
This is also a normal subgroup,
and $\nI \lhd \nO$.

\item It seems very reasonable that as $p$ varies, certain quantities related to the IRS should vary continuously.
In Section~\ref{sec:entropy-is-cont} we provide a condition on the conjugacy class $K^G$ that guarantees that the Furstenberg-entropy is indeed continuous in $p$.
\end{itemize}

After establishing continuity of the Furstenberg-entropy in $p$, one wishes to show that 
the full interval $[0,\rwe]$ of possible entropy values is realized. As mentioned
above, in the free group we must use a sequence of subgroups $K_n$ (or conjugacy classes), such that
the random walk entropy of the quotient $G/\nI[\s_n]$ is large, and push this quotient entropy all the way up to 
the full random walk entropy $\rwe$.  

This leads to a study of $\nI[K_n]$, the normal cores of 
a sequence of subgroups $(K_n)_n$, with the objective 
of showing that $\nI[K_n]$ are becoming ``smaller'' in some sense. As will be discussed  below, 
a naive condition such as having $\nI[K_n]$ converge to the trivial group does not suffice.
What is required, is to approximate the Furstenberg-Poisson boundary of the random walk on the full group by the boundaries of the quotients. 
By that we mean that we need to find a sequence such that one can recover the point in
the boundary of the random walk on the free group, by observing the projections in the
boundaries of the quotients $G/\nI[K_n]$.

However, we construct the subgroups $K_n$ geometrically,
by specifying their associated Schreier graphs 
(and not by specifying the algebraic properties of the subgroup). Hence the structure of $\nI[K_n]$ and in 
particular the Furstenberg-Poisson boundary of $G/\nI[K_n]$ is somewhat mysterious.  
A further geometric argument using the random walk on the aforementioned Schreier graph
(specifically transience of this random walk) allows us to ``sandwich'' the entropy of the $G/\nI[K_n]$  and thus show that these entropies tend to the correct limiting value.
This is done in Section~\ref{sec:free-group}.

It may be interesting for some readers to note that in the simple random walk case 
considered by Bowen \cite{bowen2010random}, he also shows a full realization by growing intervals 
of possible entropy values.  However, Bowen obtains the intervals of realization differently than us.
In fact, he proves that the entropy is continuous when restricting to a specific family
of subgroups which are ``tree-like''. This is a global property of the Schreier graph that enables him to analyze the random walk entropy of the simple random walk and to prove a continuity result. Next, within this class 
of tree-like subgroups he finds paths of IRSs obtaining realization of intervals 
of the form $[\eps_n, \rwe ]$ for $\eps_n \to 0$ (``top down'').  
While it is enough for a full realization for the simple random walk, the condition of
being {\em tree-like} is quite restrictive, and difficult to apply to other measures $\mu$ 
on the free group.

In contrast, based on Proposition \ref{thm:no-gap}, our results provide 
realization on intervals of the form $[0, \rwe - \eps_n]$ for $\eps_n \to 0$ (``bottom up''). In order
to ``push'' the entropy to the top we also use a geometric condition, related to trees. 
However, this condition is more ``local'' in nature, and 
considers only finite balls in the Scherier graph. In particular, we show in
Section \ref{scn:full real} that it is easy to construct many subgroups that 
satisfy our conditions.  Also, it enables us to work with any finitely supported measure.

\subsection{Related results}
The first realization result is due to Nevo-Zimmer~\cite{nevo2000rigidity}, where they prove that connected semi-simple Lie groups with finite center and $\R\mbox{-rank}$ at least $2$ admit only finitely many entropy values under some mixing condition on the action.
They also show that this fails for lower rank groups, such as $PSL_2(\R)$, by finding infinitely many entropy values. However, no other guarantees are given regarding these values.

The breakthrough of the realization results is the mentioned result of Bowen~\cite{bowen2010random}, where he makes a use of IRSs for realization purposes. He proves a full realization for simple random walks on free groups $\freegroup[r]$ where $2\le r <\infty$.

In~\cite{hartman2012furstenberg}, the first named author and Tamuz prove a dense realization for lamplighter groups with Liouville base. Furthermore, it is shown that
virtually free groups, such as $SL_2(\mathbb{Z})$, admit no-gap.

In a recent paper~\cite{burton2016weak}, Burton, Lupini and Tamuz prove that the 
Furstenberg-entropy is an invariant of weak equivalence of stationary actions.

%\subsection*{Acknowledgment} We would like to thank Lewis Bowen for helpful 
%discussions.

\section{Preliminaries}
\subsection{Stationary spaces}
Throughout the paper we assume that $G$ is a discrete countable group
and that $\mu$ is a generating probability measure on $G$.

A measurable action $G \acts (X,\nu)$ where $(X,\nu)$ is a standard probability spaces is called \define{$(G,\mu)$-stationary} if $\sum_{g\in G} \mu(g)g\nu = \nu$. 
The basic example of non-invariant $(G,\mu)$-stationary action is the action on the {\em Furstenberg-Poisson boundary}, $\Pi(G,\mu)$ (see {\em e.g.}\ \cite{furstenberg1973boundary,furstenberg1971random,furstenberg2009stationary,
furman2002random} and references therein 
for more on this topic,
which has an extreme body of literature associated to it). 
%This space can be defined as the Mackey realization (see \cite[Theorem 1]{mackey1962point}) of the
%shift invariant sigma-algebra of the space of all $(G,\mu)$ walks. Alternatively, it
%is the the realization of the tail sigma-algebra. While these two sigma-algebras can 
%be different for general Markov chains, for random walks on group they coincide up to measure $0$
%\cite{kaimanovich1983random,ERSHLER}.

The Kakutani fixed point theorem implies that whenever $G$ acts continuously on 
a compact space $X$, there is always some $\mu$-stationary measure on $X$. However, 
there are not many explicit descriptions of stationary measures. 
%One way to construct
%stationary spaces is taking $G$-factors of $\Pi(G,\mu)$. These are called 
%$\mu$-boundaries. In particular, given a normal subgroup $N\triangleleft G$ one can 
%consider the sub sigma-algebra of $\Pi(G,\mu)$, consists of $N$-invariant subsets. The normality implies that this sigma-algebra is $G$-invariant, and hence, its Mackey realization gives a $\mu$-boundary.
%This $\mu$-boundary is isomorphic to the Furstenberg-Poisson boundary of the
%quotient group, $\Pi(G/N,\bar{\mu})$, where $\bar{\mu}$ is the push-forward of $\mu$ via
%the quotient map $G\to G/N$ (see {\em e.g.}\ Lemma 2.15 in \cite{bader2006factor}).
%A variation of this idea is the 
One way to construct stationary actions is via the {\em Poisson bundle} constructed from an IRS:
Denote by $\subg$ the space of all subgroups of $G$ (recall that $G$ is discrete).
This space admits a topology, induced by the product topology on subsets of $G$, known as the Chabauty topology. Under this topology, $\subg$ is a
compact space, with a natural right action by conjugation ($K^g=g^{-1}Kg$ where $K \in \subg$), 
which is continuous. 
An \define{Invariant Random Subgroup (IRS)} is a Borel probability measure $\lambda$ on $\subg$ which 
is invariant under this conjugation action. It is useful to think of an IRS also as a random subgroup 
with a conjugation invariant law $\lambda$. For more
background regarding IRSs we refer the interested reader to \cite{abert2011measurable,abert2014kesten}.
 
Fix a probability measure 
$\mu$ on $G$. Given an IRS $\lambda$ of $G$, one can consider the {\em Poisson bundle} over
$\lambda$, denoted by $B_\mu(\lambda)$. This is a probability space with a $(G,\mu)$ stationary action, 
and when $\lambda$ is an ergodic IRS, the action on $B_\mu(\lambda)$ is ergodic.

Poisson bundles were introduced in a greater generality by Kaimanovich in 
\cite{kaimanovich2005amenability}. In \cite{bowen2010random} Bowen relates Poisson bundles 
to IRSs. Although the stationary spaces that we construct in this paper are all Poisson bundles, we do not elaborate regarding this correspondence, as this will not be important for understanding our results. 
Rather, we refer the interested reader to \cite{hartman2013stabilizer,bowen2010random,hartman2012furstenberg,kaimanovich2005amenability}, and mention here the important features of Poisson bundles that are relevant to our context.
To describe these features, we now turn to describe Schreier graphs. 
%and then relate them to Poisson bundles.

%
%Given a subgroup $\s \in\subg$, one can consider the $\mu$-random walk on the coset space
%$\s \backslash G$, namely the process $\{\s Z_t\}_{t\in\N}$ where $Z_t$ is
%the original random walk on $G$. The boundary of this space, namely the Mackey realization
%of the tail sigma algebra of $\left((\s\backslash G)^\N, \mathbb{P}_\mu \right)$ where 
%$\mathbb{P}_\mu$ is the Markovian measure, is a measurable factor of $\Pi(G,\mu)$ but not 
%a $G$-space. However, by bundling using an IRS, the boundaries of such coset spaces
%one gets a $(G,\mu)$-stationary space.
%
%A more geometric approach to describe this object is the following. For convenience, and since
%are discussion is focus on the free group, we present it here for $G=\freegroup$.

\subsection{Schreier graphs}\label{sec:schreier-graphs}
Since we will use Schreier graphs only in the context of the free group, we assume in this section that 
$G$ is a free group. To keep the presentation simple, we consider
$G=\freegroup$ the free group on two generators, generated by $a$ and $b$.
The generalization to higher rank is straightforward.

Let $\s \in \subg$. The \define{Schreier graph} associated with $\s$ is a rooted oriented graph, 
with edges labeled by $a$ and $b$, which is defined as follows. 
The set of vertices is the coset space $\s \backslash G$ and the set of edges is 
$\{ (\s g,\s g a) , (\s g,\s g b) \ : \ \s g \in \s \backslash G \}$. 
Edges of the type $(\s g , \s g a)$ are labeled by $a$ and
of the type $(\s g , \s g b)$ are labeled by $b$. 
It may be the case that there are multiple edges between vertices,
as well as self loops. The root of the graph is defined to be the trivial coset $\s$.

From this point on, it is important that we are working with the free group, and not
a general group which is generated by two elements.
Consider an abstract (connected) rooted graph with oriented edges.
Suppose that every vertex has exactly two incoming edges labeled by $a$ and $b$, and two outgoing edges labeled by $a$ and $b$.
We may then consider this graph as a Schreier 
graph of the free group. 

Indeed, given such a graph $\Gamma$ one can recover the subgroup $\s$: 
%(For this statement to be precise one should consider these Schreier graphs up to 
%an isomorphism, that is up to graph-automorphisms that send the root to the root 
%and preserves the orientation and labelling structure.) We now explain how.
Given any element $g=w_1\cdots w_n$, of the free group (where $w_1\cdots w_n$ is a reduced word in $\{a,a^{-1},b,b^{-1}\}$) and a vertex $v$ in the graph, 
one can ``apply the element $g$ to the vertex $v$'' to get another vertex denoted $v.g$, as follows:
\begin{itemize}
\item Given $w_i \in \{a,b\}$ move from $v$ to the adjacent vertex $u=v.w_i$ such that the oriented edge
$(v,u)$ is labeled by $w_i$.  

\item Given $w_i \in \{a^{-1} , b^{-1} \}$ move from $v$ to the adjacent vertex $u=v.w_i$ such that the 
oriented edge $(u,v)$ is labeled $w_{i}^{-1}$ (that is, follow the label $w_i$ using the reverse orientation).

\item Given $g = w_1 \cdots w_n$ written as a reduced word in the generators, 
apply the generators one at a time, to get $v.g = (\cdots((v.w_1).w_2)\ldots).w_n$.
\end{itemize}
%
%By that we mean walking along the labels of the edges
%edges according to the letters $w_n , \ldots, w_1$.  
%When applying $a^{-1}$ (respectively $b^{-1}$) to a vertex, we mean traversing an edge labelled $a$
%in the opposite orientation.
Thus, we arrive at a right action of $G$ on the vertex set. 
%(since we consider right cosets). 
%and we write $u=v.g$. 
Note that this action is not by graph automorphisms, 
rather only by permutations of the vertex set. We say that \define{$g$ preserves $v$} if $v.g=v$.

To recover the subgroup $\s\leq \freegroup$ from a graph $\Gamma$ as above, 
let $\s$ be the set of all elements of the
free group that preserve the root vertex. Moreover, changing the root - that is, recovering 
the graph which is obtained be taking the same labeled graph but when considering 
a different vertex as the root - yields a conjugation of $\s$ as the corresponding subgroup. 
Namely, if the original
root was $v$ with a corresponding subgroup $\s$, 
and if the new root is $v.g$ for some $g$, then one gets the corresponding subgroup $\s^g$.
Hence when ignoring the root, one can think of the Schreier graph as representing the conjugacy class $\s^G$. In particular, if $\s$ is the subgroup associated with a vertex $v$, and $g$ is such that when rooting the Schreier graph in $v$ and in $v.g$ we get two isomorphic labeled graphs, 
then $g$ belongs to the normalizer $\n = \{ g \in G \ : \ K^g = K \}$.

%Our analysis uses another feature which is not true in general, 
%and holds in the case of the free group: when applying $g$ to a vertex $v$, we don't just
%get a new vertex $v.g$ but actually a path on the graph between $v$ and $v.g$. 
%Specifically, the path $v,v.w_1, v.w_1 w_2, \ldots, v.w_1 \cdots w_{n-1} , v.g$, 
%where $g = w_1 \cdots w_n$ is a reduced word in the generators.
%Note that in general there may be many possible paths, 
%but in the case of the free group we can associate with each element $g \in \freegroup$ and vertex $v$
%one specific path from $v$ to $v.g$. 

The subgroups of the free group that we will construct will be described by their Schreier graph. 
In the following section we explain the connection between Schreier graphs and Poisson bundles,
which is why using Shcreier graphs to construct subgroups is more adapted to our purpose.

\subsection{Random walks on Schreier graphs}
Fix a generating probability measure $\mu$ on a group $G$. 
Let $(X_t)_{t=1}^\infty$ be i.i.d.\ random variables taking values in $G$ with law
$\mu$, and let $Z_t=X_1 \cdots X_t$ be the position of the $\mu$-random walk at time $t$.

Let $\s$ be a subgroup. Although this discussion holds for any group, since we
defined Schreier graphs in the context of the free group, assume that $G=\freegroup$. 

The Markov chain $(\s Z_t )_t$ has a natural representation on the Schreier graph
associated with $\s$, by just considering the position on the graph at time $t$ to be $v.Z_t$ where $v$ is a root associated with $K$.
(This is the importance of considering \textit{left}-Schreier graphs and \textit{right}-random walks.)
This is a general correspondence, not special to the free group.

The boundary of the Markov chain $KZ_t$ where $K$ is fixed, is the boundary of the random walk on the 
graph that starts in $v$, namely, the collection of tail events of this Markov chain. The Poisson bundle is a bundle of such boundaries, or alternatively, it is the boundary
of the Markov chain $KZ_t$ where $K$ is a random subgroup distributed according to $\lambda$. This perspective was our intuition behind the construction in Section~\ref{sec:free-group}. 

\subsection{Random walk entropy}
Recall that we assume throughout that all random walk measures $\mu$ have finite
entropy, that is $H(Z_1)=-\sum_g \Pr [Z_1=g]\log(\Pr [Z_1=g]) < \infty$ where $Z_t$
as usual is the position of the $\mu$-random walk at time $t$. 
The sequence of numbers $H(Z_t)$ is a sub-additive sequence so the following
limit exists and finite.
\begin{definition}
The \define{random walk entropy} of $(G,\mu)$ is given by 
$$\rwe = \lim_{t\to\infty}  \frac{1}{t}H(Z_t).$$
\end{definition}

The \define{Furstenberg-entropy} of a $(G,\mu)$-stationary space, $(X,\nu)$ is given by
$$ h_\mu(X,\nu)=-\sum_{g\in G} \mu(g) \int_X \log \frac{dg^{-1}\nu}{d\nu}d\nu(x)$$

By Jensen's inequality, $h_\mu(X,\nu) \ge 0$ and equality holds if and only if the action
is a measure preserving action.
Furthermore,  Kaimanovich and Vershik proved in~\cite{kaimanovich1983random} that $h_\mu(X,\nu)\le \rwe$ for any stationary action $(X,\nu)$, and equality holds for the Furstenberg-Poisson boundary, namely
\begin{align}\label{eq:random-walk-entropy}
h_\mu(\Pi(G,\mu))=\rwe,
\end{align}
where $\Pi(G,\mu)$ denotes the Furstenberg-Poisson boundary of $(G,\mu)$.

If $\rwe=0$ the pair $(G,\mu)$ is called \define{Liouville}, which is equivalent to 
triviality of the Furstenberg-Poisson boundary.

The \define{Furstenberg-entropy realization problem} is to find the exact values in the interval
$[0,\rwe]$ which are realized as the Furstenberg entropy of ergodic stationary actions.

In a similar way to the Kaimanovich-Vershik formula \eqref{eq:random-walk-entropy}, Bowen 
proves the following:
\begin{theorem}[Bowen, \cite{bowen2010random}] \label{thm:Bowen}
Let $\lambda$ be an IRS of $G$. Then 
\begin{align}\label{eq:the-entropy-is-semi-continuous}
h_\mu (B_\mu(\lambda)) = \lim_{t\to\infty} \frac{1}{t}\int_{\subg} H(\s Z_t)d\lambda(\s)=\inf_{t }  \frac{1}{t}\int_{\subg} H(\s Z_t)d\lambda(\s)
\end{align}
Moreover, the $(G,\mu)$-stationary space is ergodic if (and only if) the IRS
$\lambda$ is ergodic.
\end{theorem}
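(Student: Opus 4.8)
The plan is to read \eqref{eq:the-entropy-is-semi-continuous} as a fibered version of the Kaimanovich--Vershik formula, proving the two equalities separately and then the ergodicity claim. Set $b_t := \int_{\subg} H(KZ_t)\,d\lambda(K)$. I would first obtain $\lim_t b_t/t = \inf_t b_t/t$ from Fekete's lemma by showing that $b_t$ is subadditive. Writing $Z_{s+t}=Z_s W$ with $W=Z_s^{-1}Z_{s+t}\sim\mu^t$ independent of $\mathcal F_s$ and of $K$, the chain rule gives $H(KZ_{s+t}\mid K)\le H(KZ_s\mid K)+H(KZ_{s+t}\mid KZ_s,K)$, whose first term integrates to $b_s$. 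For the second term, condition on the coset $KZ_s=Kg$; since $K^{kg}=K^g$ for $k\in K$, the subgroup $K^g$ depends only on this coset, and the left translation $Kx\mapsto g^{-1}Kx=K^g(g^{-1}x)$ is a bijection $K\backslash G\to K^g\backslash G$ carrying $KgW$ to $K^gW$, so $H(KZ_{s+t}\mid KZ_s=Kg,K)=H(K^gW)$. Averaging over $K\sim\lambda$ and over $Z_s$, and using conjugation-invariance of $\lambda$ (so $K^g$ has the same law as $K$) together with $W\sim\mu^t$, this term contributes exactly $b_t$; hence $b_{s+t}\le b_s+b_t$. This is the one place where invariance of $\lambda$ enters.

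For the substantive equality $h_\mu(B_\mu(\lambda))=\lim_t b_t/t$, I would first record the fiber structure of the Poisson bundle: the stationary measure disintegrates as $\nu=\int_{\subg}\nu_K\,d\lambda(K)$, where $\nu_K$ is the exit (harmonic) measure of the Markov chain $(KZ_t)_t$ on the Schreier graph $K\backslash G$, and the $G$-action covers the conjugation action $K\mapsto K^g$ on the base $(\subg,\lambda)$. Because $\lambda$ is conjugation-invariant, the base action is measure preserving, so in the Radon--Nikodym cocycle $\frac{dg^{-1}\nu}{d\nu}(K,\xi)$ the base contribution is trivial and only the fiberwise derivative survives. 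Substituting into $h_\mu=-\sum_g\mu(g)\int\log\frac{dg^{-1}\nu}{d\nu}\,d\nu$ then expresses the Furstenberg-entropy of the bundle as the $\lambda$-average of the fiberwise Furstenberg-entropies of the boundaries $(\partial(K\backslash G),\nu_K)$. The core step is a \emph{fiberwise} application of Kaimanovich--Vershik: for $\lambda$-a.e.\ $K$, the Furstenberg-entropy of the exit boundary $(\partial(K\backslash G),\nu_K)$ equals the Avez entropy $\lim_t\frac1t H(KZ_t)$ of the chain on $K\backslash G$. Finally I would interchange the limit with the $\lambda$-integral; domination is immediate from $\frac1t H(KZ_t)\le\frac1t H(Z_t)\le H(\mu)$, giving $\int\left(\lim_t\frac1t H(KZ_t)\right)d\lambda=\lim_t b_t/t$.

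The main obstacle is precisely this fiberwise step. Since $K\backslash G$ is only a homogeneous space and not a group, $(KZ_t)$ is a genuine Markov chain rather than a group random walk, so the Kaimanovich--Vershik entropy/boundary identification must be carried out in that generality, and every estimate has to be kept jointly measurable in $K$ so that the $\lambda$-integration and the interchange of limit and integral are legitimate. Verifying that the bundle's Radon--Nikodym cocycle genuinely reduces to the fiber derivative, and that the fiber really is the exit boundary of the chain, is where the real care lies.

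For the ergodicity claim I would argue in both directions. If $\lambda$ is not ergodic, decompose $\lambda=\frac12\lambda_1+\frac12\lambda_2$ into distinct conjugation-invariant measures; the bundle then splits as the corresponding convex combination of the stationary spaces $B_\mu(\lambda_i)$, so it is not ergodic. Conversely, suppose $\lambda$ is ergodic and let $F\in L^\infty(B_\mu(\lambda))$ be $G$-invariant. Its restriction to $\lambda$-a.e.\ fiber is invariant for the boundary of the coset chain, hence constant, since a Poisson boundary carries no nonconstant invariant functions; thus $F$ descends to a function $\tilde F$ on $(\subg,\lambda)$, and $G$-invariance of $F$ forces $\tilde F$ to be conjugation-invariant, so ergodicity of $\lambda$ makes $\tilde F$ --- and therefore $F$ --- constant.
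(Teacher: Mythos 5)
You should first note that the paper itself offers no proof of this statement: it is quoted verbatim from Bowen \cite{bowen2010random}, so the comparison must be against Bowen's argument, which your outline partly parallels. Your first part is correct and is essentially the standard argument: conditioning on the coset $KZ_s=Kg$, using that $K^g$ depends only on the coset, the bijection $Kx\mapsto K^g(g^{-1}x)$ giving $H(KgW)=H(K^gW)$, and conjugation-invariance of $\lambda$ yield subadditivity of $b_t$, and Fekete gives $\lim=\inf$. The genuine gap is the step you yourself flag as ``the main obstacle'': a \emph{fiberwise} Kaimanovich--Vershik identity for each fixed $K$ is not available, and the difficulty is structural rather than technical. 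For a fixed subgroup $K$ the chain $(KZ_t)_t$ on $K\backslash G$ is not space-homogeneous --- the walk viewed from the coset $Kg$ is isomorphic to the walk on $K^g\backslash G$ viewed from its root, i.e.\ it moves you to a \emph{different} fiber --- so there is no subadditivity within a single fiber, the limit $\lim_t\frac1t H(KZ_t)$ need not even exist for fixed $K$, and entropy/boundary identifications for Markov chains require a stationary measure for the chain, which here lives precisely on the bundle of all conjugates (that is, it \emph{is} the $\lambda$-average), not on any single coset space. Consequently the order of quantifiers in your plan cannot be realized: one cannot prove the identity fiber by fiber and then integrate (your dominated-convergence step also presupposes the missing pointwise convergence). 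Bowen's proof runs the Kaimanovich--Vershik argument directly on the averaged conditional entropies of the stationary pair (subgroup, trajectory), which is exactly how conjugation-invariance of $\lambda$ restores the stationarity that a single fiber lacks.

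Your ergodicity argument has a second concrete flaw in the converse direction. The fibers of the bundle are not $G$-invariant: $g$ carries the fiber over $K$ to the fiber over a conjugate of $K$, so the restriction of a $G$-invariant function $F$ to the fiber over $K$ is \emph{not} an invariant function of that fiber's chain --- it is invariant only under $N_G(K)$, and the fiber is not a Poisson boundary for an $N_G(K)$-walk --- hence ``invariant on a Poisson boundary, therefore constant'' does not apply. The correct route is to set $\phi(K)=\int F\,d\nu_K$, observe that $G$-invariance of $F$ together with the harmonicity built into the bundle gives $\phi(K)=\sum_x\mu(x)\phi(K^x)$, and then use that conjugation preserves $\lambda$: strict convexity (equality in Jensen in $L^2(\lambda)$) forces $\phi(K^x)=\phi(K)$ a.e.\ for all $x$ in the support of $\mu$, so $\phi$ is conjugation-invariant and constant by ergodicity of $\lambda$. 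Even then one must still show that $F$ is measurable over the base, i.e.\ relative ergodicity of $B_\mu(\lambda)$ over $(\subg,\lambda)$, which is where the Poisson-boundary property of the fibers genuinely enters, via a conditional tail-triviality (zero--two law) given the base; this step is absent from your sketch. Your first direction is fine: $\lambda\mapsto B_\mu(\lambda)$ is affine and the base is a factor, so a nontrivial decomposition of $\lambda$ lifts to a nontrivial invariant decomposition of the bundle.
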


This reduces the problem of finding $(G,\mu)$-stationary actions with many Furstenberg entropy values in $[0,\rwe]$, 
to finding IRSs $\lambda$ with many different random walk entropy values.

Regarding continuity,
recall that the space $\subg$ is equipped with the Chabauty topology. 
For finitely generated groups this topology is induced by the natural metric for which two subgroups
are close if they share the exact same elements from a large ball in $G$.
One concludes that the Shannon entropy, $H$, as a function on $\subg$ is a continuous function.
The topology on $\subg$ induces a weak* topology on, 
$\prob{\subg}$, the space of IRSs of $G$. 
By the continuity of $H$ and the second equality in \eqref{eq:the-entropy-is-semi-continuous} we get that the Furstenberg-entropy of Poisson bundles of the form 
$B_\mu(\lambda)$ is an upper semi-continuous function of the IRS. Since the entropy is non-negative we conclude:
\begin{cor}[Bowen, \cite{bowen2010random}]
\label{cor:entropy_semi_cont}
If $\lambda_n \to \lambda$ in the weak* topology on $\prob{\subg}$, such that $h_\mu(B_\mu(\lambda))=0$ then  $h_\mu(B_\mu(\lambda_n))\to 0$.
\end{cor}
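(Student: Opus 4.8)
The plan is to deduce Corollary \ref{cor:entropy_semi_cont} directly from the upper semi-continuity of the Furstenberg-entropy as a function of the IRS, which in turn follows from the second equality in \eqref{eq:the-entropy-is-semi-continuous} together with the continuity of the Shannon entropy function $H$ on $\subg$. The key observation is that \eqref{eq:the-entropy-is-semi-continuous} expresses $h_\mu(B_\mu(\lambda))$ as an \emph{infimum} over $t$ of the quantities
\[
  \Phi_t(\lambda) := \frac{1}{t}\int_{\subg} H(\s Z_t)\,d\lambda(\s).
\]
First I would argue that each $\Phi_t$ is a continuous function of $\lambda$ in the weak* topology. For fixed $t$, the integrand $\s \mapsto H(\s Z_t)$ is a (bounded, in the finitely supported case, or at least suitably integrable) continuous function of the subgroup $\s$, since $H$ is continuous on $\subg$ and $Z_t$ ranges over a fixed countable set with fixed law $\mu^t$. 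Hence $\lambda \mapsto \int_{\subg} H(\s Z_t)\,d\lambda(\s)$ is weak*-continuous, being the integral of a fixed continuous function against $\lambda$.

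Next I would invoke the standard fact that an infimum of continuous functions is upper semi-continuous: since $h_\mu(B_\mu(\lambda)) = \inf_t \Phi_t(\lambda)$ and each $\Phi_t$ is continuous, the map $\lambda \mapsto h_\mu(B_\mu(\lambda))$ is upper semi-continuous. Combined with the non-negativity $h_\mu(B_\mu(\lambda)) \geq 0$ (which holds for every IRS by Jensen's inequality), upper semi-continuity immediately yields the conclusion. Explicitly, if $\lambda_n \to \lambda$ with $h_\mu(B_\mu(\lambda)) = 0$, then upper semi-continuity gives
\[
  \limsup_{n\to\infty} h_\mu(B_\mu(\lambda_n)) \leq h_\mu(B_\mu(\lambda)) = 0,
\]
while $\liminf_{n\to\infty} h_\mu(B_\mu(\lambda_n)) \geq 0$ by non-negativity, so $h_\mu(B_\mu(\lambda_n)) \to 0$.

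The main obstacle, and the only genuinely non-trivial point, is justifying the continuity of $\Phi_t$ when $H(\s Z_t)$ need not be uniformly bounded over $\s$ (for infinitely supported $\mu$ this requires care, controlling the tails via the finite-entropy hypothesis $H(\mu) < \infty$ and dominated convergence). For the finitely supported regime that the paper ultimately works in, $Z_t$ takes finitely many values and $H(\s Z_t) \leq \log|\mathrm{supp}(\mu^t)|$ is uniformly bounded, so this difficulty evaporates and continuity of each $\Phi_t$ is clean. I would therefore state the semi-continuity claim first as the substantive content and then peel off the corollary as a one-line consequence.
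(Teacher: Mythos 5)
Your proof is correct and is essentially the paper's own argument: the paper likewise combines the continuity of the Shannon entropy $H$ on $\subg$ (in the Chabauty topology) with the second equality in \eqref{eq:the-entropy-is-semi-continuous}, so that $\lambda \mapsto h_\mu(B_\mu(\lambda))$ is an infimum of weak*-continuous functionals and hence upper semi-continuous, and then concludes via non-negativity exactly as you do. Your closing remark on the unboundedness of $\s \mapsto H(\s Z_t)$ for infinitely supported $\mu$ is a genuine point that the paper passes over silently, but it refines rather than changes the route.
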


However, it is important to notice that the Furstenberg-entropy is far from being a continuous function of the IRS. 
Indeed, consider the free group, which is residually finite. 
There exists a sequence of normal subgroups $N_n$, 
such that $N_n \backslash \freegroup$ is finite for any $n$, which 
can be chosen so that, when considered as IRSs $\lambda_n=\delta_{N_n}$,
we have $\lambda_n \to \delta_{\{e\}}$.  
In that case $h_\mu(B_\mu(\lambda_n))=0$ for all $n$ (because $H(N_n Z_t)$ is bounded in $t$), 
but $B_\mu(\delta_{\{e\}})$ is the Furstenberg-Poisson boundary of the free group,
and its Furstenberg entropy is positive for any $\mu$ (follows for example from the
non-amenability of the free group). 

We want to point out that this lack of continuity makes it delicate
to find conditions on quotients of the free group to have large entropy. 
In Section \ref{sec:free-group} 
we will deal with this problem by introducing a tool to prove that the entropy 
does convergence along a sequence of subgroups which satisfy some geometric conditions.

The free group is also residually nilpotent, so one can replace the finite quotient from
the discussion above by infinite nilpotent quotients. We arrive at a similar discontinuity of the entropy function, due to the following classical result. As we will use it in the sequel,  we quote it here for future reference. The original case of Abelian groups is due to Choquet-Deny \cite{choquet1960lequation} and the nilpotent case appears in \cite{raugi2004general}.

\begin{theorem}[Choquet-Deny-Raugi]
\label{fact:nilpotent-are-Liouville}
Let $G$ be a nilpotent group.  
For any probability measure $\mu$ on $G$ with finite entropy, the pair $(G,\mu)$ is Liouville ({\em i.e.}\ $\rwe = 0$).
\end{theorem}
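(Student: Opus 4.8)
The plan is to restate the conclusion $\rwe = 0$ as the assertion that every bounded $\mu$-harmonic function $f\colon G\to\R$ — that is, every bounded $f$ with $f(g)=\sum_{h}\mu(h)f(gh)$ for all $g$ — is constant. This is the standard equivalence (using $H(\mu)<\infty$) between vanishing random walk entropy, triviality of the Furstenberg--Poisson boundary, and the Liouville property already recalled in the excerpt. I would then argue by induction on the nilpotency class $c$ of $G$, with the whole argument resting on a single relative Choquet--Deny claim: \emph{for any bounded $\mu$-harmonic $f$ and any central element $z\in Z(G)$, one has $f(zg)=f(g)$ for every $g\in G$.}

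To prove the claim I would run the $\mu$-random walk $Z_t=X_1\cdots X_t$ and use that $M_t=f(Z_t)$ is a bounded martingale. Boundedness gives $L^2$-convergence and square-summable increments, so $\E\big[(f(Z_{t+1})-f(Z_t))^2\big]\to 0$; conditioning on the last step yields $\E\big[(f(Z_t s)-f(Z_t))^2\big]\to 0$ for every $s$ in the support of $\mu$. The first technical point is to upgrade this from generators to arbitrary group elements. Here I would use the pointwise domination of measures $\mu^{*(t+m)}\ge \mu^{*m}(w)\,\mu^{*t}*\delta_{w}$, which transfers the vanishing under the law $\mu^{*t}$ to any shifted law $\mu^{*t}*\delta_{w}$; telescoping along a word for $w$ and using that $\mathrm{supp}(\mu)$ generates $G$ as a semigroup then gives $\E\big[(f(Z_t w)-f(Z_t))^2\big]\to 0$ for every $w\in G$. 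Finally I would take $w=z$ and exploit centrality, $gz=zg$, to rewrite $f(Z_t z)=f(zZ_t)$; thus $u(g):=f(zg)-f(g)$ satisfies $\|u(Z_t)\|_2\to 0$. Since $u$ is itself bounded and harmonic, $\E[u(Z_t)]=u(e)$ for all $t$, which forces $u(e)=0$; applying the same reasoning to the harmonic function $x\mapsto f(hx)$ gives $u(h)=0$ for every $h$, which is the claim.

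With the claim established the induction is short. If $G$ is abelian then $Z(G)=G$ and the claim says directly that $f$ is constant — this is the classical Choquet--Deny base case. If $G$ has class $c\ge 2$, then $Z=Z(G)$ is nontrivial and $G/Z$ is nilpotent of class strictly less than $c$; because $Z$ is central its left and right cosets coincide, so the claim shows $f$ is constant on cosets of $Z$ and descends to a bounded harmonic function on $G/Z$ for the pushforward measure $\bar\mu$, which remains generating and of finite entropy. The inductive hypothesis forces the descended function, hence $f$ itself, to be constant.

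I expect the main obstacle to be the claim itself, and inside it the passage from control of one-step right increments under $\mu^{*t}$ to genuine invariance under left multiplication by central elements: centrality is precisely what converts right-translation estimates into left-invariance, and it is the only place where nilpotency (through a nontrivial center) is used. The measure-domination step is the device that makes the crude increment estimate robust enough to apply to arbitrary words, and ensuring it interacts correctly with the finite-entropy hypothesis used in the reduction is the part I would treat most carefully.
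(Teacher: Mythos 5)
Your proof is correct, and it follows essentially the same route the paper itself sketches: the key fact that central elements act trivially on bounded $\mu$-harmonic functions (equivalently, on the Furstenberg--Poisson boundary), which settles the abelian case, followed by induction on the nilpotency class via the quotient $G/Z(G)$ with the pushforward measure $\bar\mu$ (still generating and of finite entropy). The paper cites Choquet--Deny and Raugi rather than proving this, so your martingale/$L^2$-increment argument --- including the convolution-domination step $\mu^{*(t+m)} \ge \mu^{*m}(w)\,\mu^{*t}*\delta_w$ used to pass from support elements to arbitrary $w$ --- simply supplies in full detail the central-invariance step that the paper delegates to the references.
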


We mention  that the triviality of the Furstenberg-Poisson boundary holds for general $\mu$ without any entropy assumption, 
although in this paper we consider only $\mu$ with finite entropy. 
The proof of the Choquet-Deny Theorem 
follows from the fact that the center of the group always acts trivially on the Furstenberg-Poisson 
boundary and hence Abelian groups are always Liouville. 
An induction argument extends this to nilpotent groups as well.

\section{Intersectional IRSs}\label{sec:percolation-IRS}
In this section we show how to construct a family of IRSs, given an infinite conjugacy class.

Let $G$ be a countable discrete group, and let $\s \le G$ be a subgroup with infinitely many different conjugates 
$|\s^G|  = \infty$. Equivalently, the normalizer $N_G(\s) : = \{ g \in G \ : \ K^g = K \}$ 
is of infinite index in $G$.

%We are interested in the action on $K^G$ (the orbit of $\s$).
Recall that the $G$ action on $K^G$
is the right action $K^g=g^{-1}Kg$. 
%and hence the orbit-stabilizer theorem gives the identification of $\s^{G} \cong \n \backslash G$ with right cosets and not left cosets.  Indeed, 
Since $\s^{ng}=g^{-1} n^{-1} \s ng = g^{-1} \s g = \s^{g}$ for any
$n\in \n$, the $\s$-conjugation depends only on the coset $\n g$.  
That is, conjugation $K^\theta$ for a right coset $\theta \in \n \backslash G$ is well defined.

We use $\percspace$ to denote the family of all subsets of $\n \backslash G$.  
This is identified canonically with $\{ 0,1\}^{\n \backslash G}$.
Note that $G$ acts from the right on subsets in $\percspace$, via
$\Theta .g = \{ \theta g \ : \ \theta \in \Theta \}$.
Given a non-empty subset $\Theta \subset \n \backslash G$ we define the subgroup 
$$\core_\Theta(\s)=\bigcap_{\theta \in \Theta}\s^{\theta} .$$

\begin{claim}\label{claim-definition-of-path}
The map $\varphi:\percspace \to \subg$ defined by $\varphi(\Theta) = \core_\Theta(K)$ is $G$-equivariant.

In particular, if we denote by $\lambda_{p,K}$ the $\varphi$-push-forward of the 
Bernoulli-$p$ product measure on $\percspace$, 
then $\lambda_{p,K}$ is an ergodic IRS for any $p\in(0,1)$.
\end{claim}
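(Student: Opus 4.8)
The plan is to establish the three assertions separately: $G$-equivariance of $\varphi$, invariance of the pushforward (which makes $\lambda_{p,K}$ an IRS), and ergodicity.

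First I would verify $G$-equivariance by direct computation. Since conjugation by a fixed $g$ is an automorphism of $G$ it commutes with intersections, and the index set of the shifted family is $\Theta.g=\{\theta g:\theta\in\Theta\}$, so
\[
\varphi(\Theta.g)=\bigcap_{\theta\in\Theta}K^{\theta g}=\bigcap_{\theta\in\Theta}(K^{\theta})^g=\Big(\bigcap_{\theta\in\Theta}K^{\theta}\Big)^g=\varphi(\Theta)^g,
\]
where one uses $K^{\theta g}=(K^{\theta})^g$ and that conjugation $K^{\theta}$ depends only on the coset $\theta\in\n\backslash G$. Before applying $\varphi$ I would note that the empty set is negligible: since $[G:\n]=\infty$, the Bernoulli-$p$ measure of $\{\emptyset\}$ equals $\prod_{\theta}(1-p)=0$ for $p\in(0,1)$, so $\varphi$ is defined almost everywhere and the construction is insensitive to whatever convention one later adopts at $\emptyset$.

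Next, the Bernoulli-$p$ product measure on $\percspace$ is invariant under the coordinate-permutation $G$-action, because a product measure is invariant under any permutation of its index set, and the action $\Theta\mapsto\Theta.g$ is exactly the right-translation action of $G$ on $\n\backslash G$ applied to coordinates. Combined with the equivariance of $\varphi$, the pushforward $\lambda_{p,K}=\varphi_*(\text{Bernoulli-}p)$ is invariant under the conjugation action on $\subg$, i.e.\ it is an IRS. For ergodicity I would use that the pushforward of an ergodic measure under a $G$-equivariant map is ergodic: a conjugation-invariant set $A\subset\subg$ pulls back to the $G$-invariant set $\varphi^{-1}(A)$, whose measure is $0$ or $1$. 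Hence it suffices to show the Bernoulli action $G\acts(\percspace,\text{Bernoulli-}p)$ is ergodic, which I would do by the standard cylinder-approximation argument: given a $G$-invariant set $A$ and $\eps>0$, choose a cylinder set $B$ depending only on coordinates in a finite $F\subset\n\backslash G$ with $\Pr[A\triangle B]<\eps$; if some $g$ satisfies $F.g\cap F=\emptyset$ then $B$ and $B.g$ depend on disjoint coordinates, hence are independent, and invariance of $A$ yields $\Pr[A]\approx\Pr[B\cap B.g]=\Pr[B]^2\approx\Pr[A]^2$, forcing $\Pr[A]\in\{0,1\}$ as $\eps\to0$.

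The main obstacle is producing, for every finite $F$, an element $g$ with $F.g\cap F=\emptyset$. Writing the cosets of $F$ as $\n h_1,\dots,\n h_k$, the requirement $\n h_i g\neq\n h_j$ for all $i,j$ is equivalent to $g\notin\bigcup_{i,j}h_i^{-1}\n h_j$, a finite union of right cosets of the conjugate subgroups $h_i^{-1}\n h_i$. Each such conjugate has infinite index in $G$, since $\n$ does. By B.\,H.\ Neumann's lemma (a group is never a finite union of cosets of infinite-index subgroups) this union cannot be all of $G$, so the desired $g$ exists. This is the one genuinely non-elementary input; the remaining steps are routine bookkeeping.
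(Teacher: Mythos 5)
Your proof is correct and follows essentially the same route as the paper: the identical equivariance computation $(\core_\Theta(K))^g=\core_{\Theta.g}(K)$, followed by pushing forward the Bernoulli-$p$ product measure, which the paper simply declares to be ``clearly'' an ergodic invariant measure. Your extra details --- noting that $\{\emptyset\}$ is null because $[G:\n]=\infty$, and verifying ergodicity of the generalized Bernoulli action via cylinder approximation together with B.~H.~Neumann's coset covering lemma applied to the cosets $h_i^{-1}\n h_j$ --- are precisely the standard facts the paper leaves implicit, and they are carried out correctly.
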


\begin{proof}
%The $G$ action on $\percspace$ is the natural extension action from the right $G$-action on $\n \backslash G$, namely, $S.g=\{sg|s\in S\}$ for $S\subset \n \backslash G$.
%Now,
Note that
$$
(\core_\Theta(K))^g=\Big(\bigcap_{\theta \in \Theta}K^{\theta } \Big)^{g}=
\bigcap_{\theta \in \Theta}(K^{\theta})^{g}=\bigcap_{\theta'\in \Theta.g}K^{\theta'}=\core_{\Theta.g}(K)
$$
It follows that we can push forward any $G$-invariant measure on $\percspace$ to get IRSs.
For any $p\in(0,1)$ consider the Bernoulli-$p$ product measure on 
$\percspace \cong \{0,1\}^{\n \backslash G}$, namely, each
element in $\n\backslash G$ is chosen to be in $\Theta$ independently with probability $p$. These measures 
are clearly ergodic invariant measures. It follows that the push-forward measures $\lambda_{p,K}$ are ergodic invariant measures on $\subg$.
\end{proof}

%The family of Bernoulli-$p$ product measures on $\percspace$ interpolates between two 
%fixed points - the empty set, and the full set $\n \backslash G$.  
%Similarly, the family of IRSs $\lambda_{p,K}$ interpolates  between the two normal subgroups: $\nI$ that should stand for $\lambda_{1,K})$ and $\nO$ that stands for $\lambda_{0,K}$. Formally, we have:

We continue to determine the limits of $\lambda_{p,K}$ as $p \to 0$ and $p \to 1$.
When the subgroup $K$ is clear from the context, we write $\lambda_p = \lambda_{p,K}$.

\begin{lemma} \label{lem:nO}
Given $\s \le G$ with $|\s^G|=\infty$, there exist two normal subgroups, 
$\nO,\nI \triangleleft G$ such that $\lambda_{p,K} \xrightarrow{p\to 0} \delta_{\nO}$  and 
$\lambda_{p,K} \xrightarrow{p\to 1} \delta_{\nI}$, 
where the convergence is in the weak* topology on $\prob{\subg}$.
\end{lemma}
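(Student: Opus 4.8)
The plan is to reduce the whole statement to a computation of the single-element membership probabilities $\Pr[g \in \core_\Theta(\s)]$, and then read off the two limiting subgroups. Recall that $\subg$ carries the Chabauty topology, in which a neighborhood basis of a subgroup $N$ is given by the sets $\set{N' \in \subg : N' \cap F = N \cap F}$ as $F$ ranges over the finite subsets of $G$. Since the limits are point masses, weak* convergence $\lambda_{p,\s} \to \delta_N$ is equivalent to the requirement that $\lambda_{p,\s}(U) \to 1$ for every open $U \ni N$, hence to $\Pr[\core_\Theta(\s) \cap F = N \cap F] \to 1$ for every finite $F$; by a union bound over the finite set $F$ this in turn reduces to the single-element criterion that for each fixed $g \in G$ one has $\Pr\big[\mathbbm{1}\set{g \in \core_\Theta(\s)} = \mathbbm{1}\set{g \in N}\big] \to 1$. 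So the lemma becomes purely a statement about the asymptotics of $\Pr[g \in \core_\Theta(\s)]$ as $p \to 0$ and as $p \to 1$.

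First I would exploit that membership is monotone in $\Theta$. For fixed $g$, set $A_g = \set{\theta \in \n\backslash G : g \in \s^\theta}$. Since $\core_\Theta(\s) = \bigcap_{\theta \in \Theta}\s^\theta$, we have $g \in \core_\Theta(\s)$ if and only if $\Theta \subseteq A_g$, equivalently $\Theta \cap A_g^c = \emptyset$. Because $\Theta$ is Bernoulli-$p$ percolation the coordinates are independent, so $\Pr[g \in \core_\Theta(\s)] = \prod_{\theta \in A_g^c}(1-p)$, which equals $(1-p)^{|A_g^c|}$ with the convention $(1-p)^\infty = 0$. Everything now depends only on whether $A_g^c$ is empty, finite, or infinite.

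This trichotomy immediately produces the two candidate limit subgroups. For the $p\to 1$ limit, the relevant subgroup is $\set{g : A_g^c = \emptyset} = \bigcap_\theta \s^\theta$, which is precisely the normal core $\nI$. If $g \in \nI$ then $\Pr[g \in \core_\Theta(\s)] = 1$ for every $p$, whereas if $g \notin \nI$ then $A_g^c \neq \emptyset$ forces $\Pr[g \in \core_\Theta(\s)] \le 1-p \to 0$; either way the element-wise criterion holds and $\lambda_{p,\s}\to\delta_{\nI}$. For the $p\to 0$ limit the relevant subgroup is $\set{g : |A_g^c| < \infty}$, the elements lying in all but finitely many conjugates; I would check this is a subgroup (intersections of cofinite sets are cofinite, giving closure under products via $A_{g_1}\cap A_{g_2}\subseteq A_{g_1g_2}$, and $A_{g^{-1}} = A_g$ giving closure under inverses) and that it coincides with the empty-core $\nO$ built from the semi-norm of Section~\ref{scn:empty-core}. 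If $g \in \nO$ then $\Pr[g \in \core_\Theta(\s)] = (1-p)^{|A_g^c|} \to 1$, while if $g \notin \nO$ then $|A_g^c| = \infty$ gives $\Pr[g \in \core_\Theta(\s)] = 0$ for every $p$; again the criterion holds and $\lambda_{p,\s}\to\delta_{\nO}$.

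Normality of both limits can then be obtained essentially for free: each $\lambda_{p,\s}$ is an IRS by Claim~\ref{claim-definition-of-path}, the set of conjugation-invariant measures is weak* closed (conjugation acts continuously on $\subg$, so the pushforward is weak* continuous), and therefore the limits $\delta_{\nO}$ and $\delta_{\nI}$ are again IRSs, which for a point mass is exactly the assertion that the subgroup is normal; alternatively one computes directly that right multiplication on $\n\backslash G$ by any $h$ sends $A_g$ to $A_{h^{-1}gh}$ and preserves both being all of $\n\backslash G$ and having finite complement. I expect the genuinely delicate part to be the $p\to 0$ analysis: contrary to the naive picture, $\Theta$ is almost surely infinite for every fixed $p\in(0,1)$ (the index $[G:\n]$ is infinite), so the ``empty intersection'' is realized only in the limit and the correct limiting object is the cofinite-conjugate subgroup $\nO$ rather than $\s$ or $G$. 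Pinning this down and reconciling it with the independent definition of $\core_\emptyset(\s)$ is where the care is needed, whereas the $p\to 1$ side is comparatively routine.
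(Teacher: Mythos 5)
Your proposal is correct and takes essentially the same route as the paper's proof: the computation $\Pr[g \in \core_\Theta(\s)] = (1-p)^{|\mho_g|}$ (your $A_g^c$ is the paper's $\mho_g$) together with the empty/finite/infinite trichotomy identifying $\nI = \{g : \mho_g = \emptyset\}$ and $\nO = \{g : |\mho_g| < \infty\}$ is exactly the paper's argument, and your subgroup and normality checks via $A_{g_1} \cap A_{g_2} \subseteq A_{g_1 g_2}$, $A_{g^{-1}} = A_g$, and equivariance of $g \mapsto A_g$ reproduce the paper's semi-norm properties from Section~\ref{scn:empty-core}. Your only additions — the explicit Chabauty-neighborhood plus union-bound reduction to single-element membership probabilities, and the alternative soft normality argument via weak*-closedness of the set of conjugation-invariant measures — are correct refinements of steps the paper treats as immediate.
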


While the definition of $\nI$ is apparent, 
%$$\nI=\bigcap_{s\in G/\n}K^{s}=\bigcap_{g\in G}K^{g}, $$
%is the usual normal core of $\s$, 
it is less obvious how to define $\nO$.
%Before proving this lemma, 
We now give an intrinsic description of the normal subgroup $\nO$.

\subsection{The subgroup $\nO$}\label{scn:empty-core}

%Recall that $K^G \cong \n \backslash G$. 
For any element $g\in G$, let $\Omega_g\subset \n \backslash G $ be the set of all cosets, such that $g$ belongs to the corresponding conjugation: $\Omega_g=\{ \theta \in \n \backslash G \ | \ g\in K^\theta\} $. 
For example, for $g\in \s$ we have that $\n \in \Omega_g$.

We observe that: 
\begin{enumerate}
\item $\Omega_{g}\cap\Omega_{h}\subset\Omega_{gh}$ % : $s \in\Omega_{g}\cap\Omega_{h}$ says that both $g$ and $h$ are elements in $\s^{s}$ and hence $gh\in \s^{s}$ meaning that $s \in\Omega_{gh}$.

\item $\Omega_{g^{-1}}=\Omega_{g}$ %: Since $s\in\Omega_{g^{-1}}$ iff $g^{-1}\in \s^{s}$ iff $g\in \s^{s}$.

\item $\Omega_{g^{\gamma}}=\Omega_{g}\gamma^{-1}$ %:$s\in\Omega_{g^{\gamma}}$ iff $g^{\gamma}\in \s^{s}$ iff $g\in \gamma \s^{s} \gamma^{-1} = \s^{s\gamma^{-1}}$ iff $s\gamma^{-1}\in\Omega_{g}$. 

\end{enumerate}
The first two follow from the fact that $\s^\theta$ is a group and the third since $\gamma \s^\theta \gamma^{-1} = \s^{\theta \gamma^{-1}}$.

Let $\mho_g$ be the complement of $\Omega_g$ in $\n \backslash G$.
That is, $\mho_g = \{ \theta \in \n \backslash G \ : \ \theta \not\in \Omega_g \}$.
%Recall that we assumed that $\left|\n \backslash G \right|=\infty$. 
%Let $\norm{g}$ be the number of cosets 
%$s \in \n \backslash G$ such that $g$ does not belong to $\s^s$. So 
%$\norm{g}=\left| (\n \backslash G ) \backslash \Omega_g \right|$.
Define $\norm{g} = |\mho_g |$.
The properties above show that 
\begin{enumerate}
\item $\norm{gh}\le\norm{g}+\norm{h}$ 
\item $\norm{g^{-1}}=\norm{g}$
\item $\norm{g^{\gamma}}=\norm{g}$
%($\gamma$ is an invertible map between $\Omega\backslash\Omega_{g}$ and $\Omega\backslash\Omega_{g^{\gamma}}$).
\end{enumerate}

Two normal subgroups of $G$ naturally arise: the subgroup of all elements with 
zero norm and the subgroup of all elements with finite norm.  
By (1) and (2) above, these are indeed subgroups and normality follows from (3).

The first subgroup is, by definition, the normal core $\nI =\{ g \ | \ \norm{g}=0\}$.
We claim that the second subgroup is the appropriate definition for $\nO$. 
That is, we define $\nO  = \{ g \ | \ \norm{g} <\infty\}$ and 
prove that $\lambda_{p,K} \to \delta_{\nO}$ as $p\to 0$.

\begin{proof}[Proof of Lemma~\ref{lem:nO}]
%Since $K$ is fixed in this lemma, we write
%$\lambda_p$ for $\lambda_{p,K}$. 
By definition, to show that $\lambda_p \xrightarrow{p\to0} \delta_{\nO}$ we need to show that 
for any $g\in \nO$, we have $\lambda_p(\{F\in\subg \ | \ g\in F\} ) \xrightarrow{p\to0} 1$ and for any
$g \notin \nO$, we have $\lambda_p(\{F\in\subg \ | \ g\in F\} )\xrightarrow{p\to0} 0$.

%For any $g\in G$, denote by $B_g$ the set of all cosets $s \in \n \backslash G$ for which $g\notin \s^{s}$
%the complement of $\Omega_g$). So $\norm{g}=|B_g|$.
Note that for any $\Theta \neq \emptyset$, by the definition of $\core_\Theta(\s)$, we have that $g\in \core_\Theta(\s)$ if and only if $\Theta\cap \mho_g=\emptyset$.
%if there exists $b\in B$ such that $b\in S$ then $g\notin K^{b}$ and in particular $g\notin F_{S}=\bigcap_{s\in S}K^{s}$. If $S\subset\Omega\backslash B$ then $g\in\bigcap\limits _{\omega\in\Omega\backslash B}K^{\omega}\le\bigcap\limits _{s\in S}K^{s}$. In other words, 

Now consider $\Theta$ to be a random subset according to the Bernoulli-$p$ product measure on $\percspace$. 
By the above, we have equality of the events
$\{ g\in \core_\Theta(K)\} = \{\mho_g \cap \Theta =\emptyset\}$. 
The latter (and hence also the former) has 
probability $\left(1-p\right)^{|\mho_g|}=(1-p)^{\norm{g}}$.
This includes the case where $|\mho_g|= \norm{g} = \infty$ 
(whence we have $0$ probability for $\{ \mho_g \cap \Theta = \emptyset \}$).
We conclude that for any $g \in G$,
$$ \lambda_p(\{F\in\subg \ | \ g\in F\}) = (1-p)^{\norm{g}}
=
\begin{cases}
0 & \textrm{ if } \norm{g} = \infty , \\
\to 1 \textrm{ as } p \to 0 & \textrm{ if } \norm{g} < \infty .
\end{cases}
$$
It follows that $\lambda_p \to \delta_{\nO}$ as $p\to 0$.

Finally, we show that $\lambda_p \to \delta_{\nI}$ as $p\to 1$, where we define
$\nI = \bigcap_g K^g = \core_{\n \backslash G}(K)$.
Clearly, for any $p\in(0,1)$, we have that $\nI \subset F$ for $\lambda_p$-almost every $F$, 
since $\nI \leq \core_\Theta(\s)$ for any non-empty subset $\Theta$.

On the other hand, fix some $g \not\in \nI$. By definition, there exists some $\theta \in \n \backslash G$ such that $g\not\in \s^\theta$. Whenever $\Theta$ is such that $\theta\in \Theta$, we have that $g\not\in \core_\Theta(K)$.  
The probability that $\theta\not\in \Theta$ is $(1-p)$.
Hence, for any $g$ with $\norm{g} > 0$, 
$$\lambda_p(\{F\in\subg \  | \ g \in F \})\leq (1-p) \xrightarrow{p\to 1} 0.$$
We conclude that $\lambda_p \to \delta_{\nI}$ as $p\to 1$.
\end{proof}

\subsection{Continuity of the entropy along intersectional IRSs}
\label{sec:entropy-is-cont}

Lemma \ref{lem:nO} above shows that the IRSs $\lambda_{p,K}$ interpolate between the two Dirac measures concentrated on the normal subgroups $\nO$ and $\nI$. In this section we provide a condition
under which this interpolation holds for the random walk entropy as well.

Fix a random walk $\mu$ on $G$ with
finite entropy. 
Let $h:[0,1]\to \mathbb{R}_+$ be the entropy map defined by 
%$h(p)=\rweIRS{\lambda_p}$.
$ h(p) = h_\mu(B_\mu(\lambda_p)) $ when $p \in (0,1)$
and $h(0), h(1)$ defined as the $\mu$-random walk entropies on the quotient groups 
$G/\nO$ and $G/\nI$ respectively.
In particular, the condition that $h(0)=0$ is equivalent to the Liouville property holding for
the projected $\mu$-random walk on $G/\nO$.

\begin{prop}\label{prop:continuity}
If $h(0)=0$ then $h$ is continuous.
\end{prop}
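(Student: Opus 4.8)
The plan is to show that $h$ is non-decreasing and satisfies two comparison inequalities that together force continuity, with the hypothesis $h(0)=0$ entering through the behaviour at the two endpoints. Throughout I write $a_t(p)=\frac1t\int_{\subg}H(F Z_t)\,d\lambda_{p,\s}(F)$, so that by Bowen's formula (Theorem \ref{thm:Bowen}) $h(p)=\lim_t a_t(p)=\inf_t a_t(p)$ for $p\in(0,1)$, while $h(0)=\lim_t\frac1t H(\nO Z_t)$ and $h(1)=\lim_t\frac1t H(\nI Z_t)$ are the random walk entropies on the two quotients. Two elementary facts will be used repeatedly: (i) for subgroups $A,B\le G$ the coset $(A\cap B)g$ is determined by the pair $(Ag,Bg)$, since $Ag=Ag'$ and $Bg=Bg'$ together are equivalent to $(A\cap B)g=(A\cap B)g'$; and (ii) $\core_{\Theta_1\cup\Theta_2}(\s)=\core_{\Theta_1}(\s)\cap\core_{\Theta_2}(\s)$, as recorded in Section \ref{sec:percolation-IRS}.

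First I would establish monotonicity. Using the standard monotone coupling of Bernoulli percolations, for $p\le p'$ one realizes $\Theta_p\subseteq\Theta_{p'}$ on a common space, whence $\core_{\Theta_{p'}}(\s)\subseteq\core_{\Theta_p}(\s)$ and therefore $H(\core_{\Theta_{p'}}(\s)Z_t)\ge H(\core_{\Theta_p}(\s)Z_t)$ pointwise. Integrating gives $a_t(p')\ge a_t(p)$ for every $t$, hence $h(p')\ge h(p)$, so $h$ is non-decreasing. Continuity at $0$ is then immediate: by Lemma \ref{lem:nO}, $\lambda_{p,\s}\to\delta_{\nO}$ as $p\to0$, and since $h(0)=h_\mu(B_\mu(\delta_{\nO}))=0$ by hypothesis, Corollary \ref{cor:entropy_semi_cont} yields $h(p)\to0=h(0)$.

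Next I would prove continuity on $[0,1)$ via a subadditivity estimate. For $p<p'$ set $q=\frac{p'-p}{1-p}$ and couple an independent $p$-percolation $\Theta$ with a $q$-percolation $\Theta'$ so that $\Theta\cup\Theta'$ is a $p'$-percolation. By (ii), $\core_{\Theta\cup\Theta'}(\s)=\core_\Theta(\s)\cap\core_{\Theta'}(\s)$, and by (i) the coset of this intersection is determined by the two cosets, so subadditivity of Shannon entropy gives
\begin{equation*}
H(\core_{\Theta\cup\Theta'}(\s)Z_t)\le H(\core_\Theta(\s)Z_t)+H(\core_{\Theta'}(\s)Z_t).
\end{equation*}
Taking expectations and dividing by $t$ gives $a_t(p')\le a_t(p)+a_t(q)$, and letting $t\to\infty$ gives $h(p')\le h(p)+h(q)$. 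Since $q\to0$ as $p'\to p$ from either side and $h$ is continuous at $0$ with $h(0)=0$, this comparison together with monotonicity forces both one-sided limits of $h$ at any $p\in(0,1)$ to equal $h(p)$. (Equivalently, writing $\tilde h(s)=h(1-e^{-s})$, the estimate says $\tilde h$ is subadditive, non-decreasing, and continuous at $0$, hence continuous on $[0,\infty)$, i.e. $h$ is continuous on $[0,1)$.)

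The main obstacle is continuity at the right endpoint $p=1$: there Corollary \ref{cor:entropy_semi_cont} is unavailable (it would require $h(1)=0$), and the subadditivity estimate degenerates since $q=1$. This is precisely the ``wrong-direction'' semicontinuity discussed after Corollary \ref{cor:entropy_semi_cont}, where entropy may drop in the limit. To overcome it I would exploit the complementary percolation: for $\Theta$ a $p$-percolation let $\Psi=\Theta^c$, a $(1-p)$-percolation, and note $\core_\Theta(\s)\cap\core_\Psi(\s)=\core_{\n\backslash G}(\s)=\nI$. Since $\nI\subseteq\core_\Theta(\s)$, fact (i) gives $H(\nI Z_t)-H(\core_\Theta(\s)Z_t)=H(\nI Z_t\mid\core_\Theta(\s)Z_t)=H(\core_\Psi(\s)Z_t\mid\core_\Theta(\s)Z_t)\le H(\core_\Psi(\s)Z_t)$. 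Integrating, and using that $\core_\Psi(\s)$ has law $\lambda_{1-p,\s}$, gives $\frac1t H(\nI Z_t)-a_t(p)\le a_t(1-p)$; letting $t\to\infty$ yields the crucial bound $h(1)-h(p)\le h(1-p)$. As $p\to1$ we have $1-p\to0$, so $h(1-p)\to h(0)=0$ by the continuity at $0$ established above, whence $h(p)\to h(1)$; combined with monotonicity ($h(p)\le h(1)$) this gives continuity at $1$ and completes the proof. It is exactly at this last step that the hypothesis $h(0)=0$ is indispensable.
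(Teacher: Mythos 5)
Your proof is correct and follows essentially the same route as the paper: couple the percolations so that the core of a union is the intersection of cores, use the fact that $(F_1g,F_2g)\mapsto (F_1\cap F_2)g$ is well defined to get entropy subadditivity, and invoke upper semicontinuity (Corollary \ref{cor:entropy_semi_cont}) together with $h(0)=0$ to control the error term near $0$. The only difference is organizational: the paper's single coupling $\Theta_p=\set{\theta : U_\theta\le p}$, $\Theta_\eps=\set{\theta : p<U_\theta\le q}$ yields the unified two-sided bound $0\le h(p+\eps)-h(p)\le h(\eps)$ valid up to and including $q=1$, so your separate endpoint argument via the complementary percolation is exactly that coupling specialized to $q=1$, and your three-step case analysis (monotonicity, interior subadditivity with renormalized parameter $\tfrac{p'-p}{1-p}$, endpoint) is harmless but not needed.
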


%Before proving the proposition we observe the following.
%\begin{claim}
%Let $F_1,F_2 \in \subg$ be two subgroups. For any $t$, 
%$$H((F_1\cap F_2) Z_t) \le H(F_1 Z_t ) + H(F_2 Z_t).$$
%\end{claim}
%\begin{proof}
%It is always the case that $H(F_1Z_t , F_2Z_t) \le H(F_1 Z_t ) + H(F_2 Z_t)$.
%So it is enough to show that the random coset $(F_1\cap F_2) Z_t$ is determined
%by the random pair $(F_1Z_t , F_2Z_t)$. Indeed, 
%%consider the function $F_1\backslash G \times F_2 \backslash G \to (F_1\cap F_2)\backslash G$ which sends 
%it is simple to verify that $(F_1g,F_2g)\mapsto(F_1\cap F_2)g$
%is a well defined function.
%%
%%
%%. This function is not defined over the whole product space, but only over pairs of 
%%cosets of the form $(F_1g,F_2g)$, which is the the kind of pairs that the random pairs
%%$(F_1Z_t , F_2Z_t)$ take values in.
%%
%%It is only left to check that this map is well defined, and indeed, if 
%%$(F_1g,F_2g)=(F_1\gamma,F_2\gamma)$ then $g^{-1}\gamma \in F_1$ and also 
%%$g^{-1}\gamma \in F_2$ which implies that $(F_1\cap F_2)g=(F_1\cap F_2)\gamma$ as required.
%\end{proof}

\begin{proof}%[Proof of Proposition~\ref{prop:continuity}]  
Let 
$$ H_n(p)= \int_{\subg} H(F Z_n) d\lambda_p(F) . $$ 
So $h(p)=\lim_{n\to\infty} \frac{1}{n}H_n(p)$.
First, we use a standard coupling argument to prove the for $q = p+\eps$,
\begin{equation}
\label{eq:bound-for-the-entropy}
0 \leq H_n(q)-H_n(p) \leq H_n(\eps) 
\end{equation}

Indeed, let $(U_\theta)_{\theta \in \n \backslash G}$ be i.i.d.\ uniform-$[0,1]$ random variables,
one for each coset of $\n$.  We use $\Pr, \E$ to denote the probability measure and expectation 
with respect to these random variables.
Define three subsets
$$ \Theta_p : = \{ \theta \ : \ U_\theta \leq p \} \qquad \Theta_\eps : = \{ \theta \ : \ p < U_\theta \leq q \} $$
and $\Theta_q : = \Theta_p \cup \Theta_\eps$.
We shorthand $C_x : = \core_{\Theta_x} (\s)$ for $x \in \{p,\eps,q\}$.
Note that for $x \in \{p,\eps,q\}$ the law of $C_x$ is $\lambda_x$.
Thus, 
$H_n(x) = \E [H( C_x Z_n)] $
(to be clear, the entropy is with respect to 
the random walk $Z_n$, and expectation is with respect to the random subgroup $C_x$).

Now, by definition
$$ C_q = \bigcap_{\theta \in \Theta_q} \s^\theta = 
\bigcap_{\theta \in \Theta_p} \s^\theta \cap \bigcap_{\theta \in \Theta_\eps} \s^\theta = C_p \cap C_\eps . $$
Since the map $(F_1 g, F_2 g) \mapsto (F_1 \cap F_2) g$ is well defined, 
we have that $C_q Z_n$ is determined by the pair $(C_p Z_n , C_\eps Z_n)$.
Also, since $C_q \leq C_p$, we have that $C_p Z_n$ is determined by $C_q Z_n$.
Thus, $\Pr$-a.s.,
$$ H(C_p Z_n) \leq H(C_q Z_n) \leq H(C_p Z_n) + H(C_\eps Z_n) . $$ 
Taking expectation (under $\E$) we obtain 
\eqref{eq:bound-for-the-entropy}, which then immediately leads to
$0 \leq h(q) - h(p) \leq h(\eps)$.

Corollary \ref{cor:entropy_semi_cont} asserts that the entropy function is upper-semi continuous. Thus, $\limsup_{\eps \to 0} h(\eps) \leq h(0)$.  Thus, if $h(0)=0$ we have that 
$\lim_{\eps \to 0} h(\eps) = 0$, and that the entropy function is continuous.
\end{proof}

\subsection{Applications to entropy realizations}

%Let $G$ be a discrete group and $\mu$ be a generating probability measure on $G$.
%Assume that there exists some $\s \leq G$ such that $\rweq{\nO}=0$ (here, as usual, 
%$\bar \mu$ is the projected measure). 
%Then, Proposition~\ref{prop:continuity} above gives a realization of the interval $[0,\rweq{\nI}]$. 
%Of course, it might be that $\rweq{\nI}=0$ and then this gives us no information regarding realization.

%We are now ready to prove Theorem \ref{thm:no-gap}.
%%%%%%%%%%%%%%% This theorem can only be proved once we define locally nilpotent

Let $\mu$ be a probability measure on a group $G$ and $N \triangleleft G$ a normal subgroup.
We use $\bar \mu$ to denote the projected measure on the quotient group $G / N$.

\begin{cor}\label{cor:single-subgroup-mu}
Let $\mu$ be a generating finite entropy probability measure on 
a discrete group $G$. 
Assume that there exists some $\s \leq G$ such that 
\begin{itemize}
\item $\rweq{\nO}=0$
\item $\rweq{\nI}=\rwe$
\end{itemize}
Then $(G,\mu)$ has a full realization. 

More generally, if only the first condition is satisfied, 
we get a realization of the interval $[0,\rweq{\nI}]$.
\end{cor}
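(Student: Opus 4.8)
The plan is to combine Proposition~\ref{prop:continuity} with the intermediate value theorem, using the fact that every point on the path $(\lambda_{p,K})_{p\in[0,1]}$ gives an ergodic stationary action. Keeping the notation of Section~\ref{sec:entropy-is-cont}, I would consider the entropy map $h:[0,1]\to\R_+$ with $h(p)=h_\mu(B_\mu(\lambda_{p,K}))$ for $p\in(0,1)$ and $h(0)=\rweq{\nO}$, $h(1)=\rweq{\nI}$ at the endpoints. The first hypothesis, $\rweq{\nO}=0$, is precisely the condition $h(0)=0$, so Proposition~\ref{prop:continuity} immediately yields that $h$ is continuous on the whole closed interval $[0,1]$.

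Next I would verify that each value $h(p)$ is genuinely attained by an \emph{ergodic} $(G,\mu)$-stationary action. For $p\in(0,1)$ the IRS $\lambda_{p,K}$ is ergodic by Claim~\ref{claim-definition-of-path}, so by Theorem~\ref{thm:Bowen} the Poisson bundle $B_\mu(\lambda_{p,K})$ is an ergodic stationary action whose Furstenberg entropy equals $h(p)$. For the two endpoints I would instead use the Dirac IRSs $\delta_{\nO}$ and $\delta_{\nI}$: since $\nO$ and $\nI$ are normal, they are fixed by the conjugation action, so these Dirac masses are (trivially ergodic) IRSs, and Bowen's formula \eqref{eq:the-entropy-is-semi-continuous} identifies $h_\mu(B_\mu(\delta_N))$ for a normal $N$ with the projected random walk entropy $\rweq{N}$, because the Markov chain $N Z_t$ on $G/N$ is exactly the $\bar\mu$-random walk on the quotient. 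Hence $h(0)=\rweq{\nO}$ and $h(1)=\rweq{\nI}$ are realized by the ergodic actions $B_\mu(\delta_{\nO})$ and $B_\mu(\delta_{\nI})$.

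Finally I would invoke the intermediate value theorem. As $h$ is continuous on $[0,1]$ with $h(0)=0$ and $h(1)=\rweq{\nI}$, its range contains the entire interval $[0,\rweq{\nI}]$, and by the previous paragraph every value in this range is realized by an ergodic stationary action; this already gives the general statement, the realization of $[0,\rweq{\nI}]$. Under the second hypothesis $\rweq{\nI}=\rwe$, and since Kaimanovich--Vershik forces every Furstenberg entropy into $[0,\rwe]$, the range of $h$ is exactly $[0,\rwe]$, which is full realization.

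As for the difficulty: essentially all of the real work has been carried out upstream. The delicate monotone coupling and the upper-semicontinuity input sit inside Proposition~\ref{prop:continuity}, and the entropy formula is Theorem~\ref{thm:Bowen}. The only points that warrant a line of care in the write-up are the ergodicity of the two boundary cases (resolved by placing Dirac masses on the \emph{normal} subgroups $\nO$ and $\nI$) and the identification $h_\mu(B_\mu(\delta_N))=\rweq{N}$ for normal $N$; both are routine consequences of the stated results rather than genuine obstacles.
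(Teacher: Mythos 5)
Your proposal is correct and follows essentially the same route as the paper: the paper's proof likewise takes the path $\lambda_{p,K}$ from Claim~\ref{claim-definition-of-path}, notes that the hypothesis $\rweq{\nO}=0$ is exactly $h(0)=0$ so that Proposition~\ref{prop:continuity} gives continuity, and concludes realization of $[h(0),h(1)]=[0,\rweq{\nI}]$. Your write-up merely makes explicit what the paper leaves implicit --- the intermediate value theorem, the ergodicity of $B_\mu(\lambda_{p,K})$ via Theorem~\ref{thm:Bowen}, and the identification $h_\mu(B_\mu(\delta_N))=\rweq{N}$ at the endpoints --- all of which are correct.
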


\begin{proof}
Consider the path of IRSs $\lambda_{p,K}$ defined in Claim \ref{claim-definition-of-path}.
Let $h(p)$ be defined as above, before Proposition \ref{prop:continuity}.
Since $h(0) = \rweq{\nO}=0$, by
Proposition \ref{prop:continuity} we have that $h$ is continuous, and thus 
we have a realization of the interval $[h(0) , h(1) ] = [0, \rweq{\nI}]$.
\end{proof}

In the above corollary, the subgroup $\s$ may depend on the measure $\mu$.
However, it is possible in some cases to find a subgroup $\s$ that provides 
realization for {\em any} measure $\mu$, and we now discuss alternative
conditions for the corollary.

The conditions in Corollary~\ref{cor:single-subgroup-mu} implies that $\nO$ is
a ``large'' subgroup and $\nI$ is ``small'' - at least in the sense of the random walk entropy. The best scenario hence is when $\nO=G$ and $\nI=\{1\}$. 
The prototypical example is the following:

\begin{example}\label{example:sinfity}
Let $\sinf$ denote the group of all finitely supported permutations of some infinite
countable set $X$. 
Fix some $x_0\in X$ and define $K$ to be the subgroup of 
all the finitely supported permutations of $X$ that stabilize $x_0$;
{\em i.e.}\ $K=\mathrm{stab}(x_0) = \{ \pi \in \sinf \ : \ \pi(x_0)=x_0 \}$. 
Conjugations of $K$ are of the form $\mathrm{stab}(x)$ for $x\in X$.

It follows that $\nI=\{e\}$, because $\sinf$ acts transitively on $X$ 
and only the trivial permutation stabilizes all points $x \in X$.

Since each element of $\sinf$ is a finitely supported permutation, 
it stabilizes all but finitely many points in $X$.  Thus, $\nO=\sinf$. 

This is actually a proof of Theorem~\ref{thm:main-sinf}:
$(\sinf , \mu)$ admits full realization for any generating, finite entropy, probability measure $\mu$.
\end{example}

Note that $\sinf$ is a countable group which is not finitely generated.
However, by adding some elements to it, we can get a finitely generated group with
a similar result, as we explain below in Section \ref{sec:lamplighter}. 

To continue our discussion, we want to weaken the condition $\nO=G$ to some
algebraic condition that will
still guarantee that $\rweq{\nO}=0$ for any $\mu$. 

Given a subgroup $\s$ (or a conjugacy class $\s^G$) we say that $\s$ is \define{locally co-nilpotent}
in $G$ if $G / \nO$ is nilpotent.  (The reason for this name will become more apparent in Section
\ref{scn:local}.)  With this definition, Proposition~\ref{thm:no-gap} follows immediately, as may be seen by the following.

\begin{cor}\label{cor:single-subgroup}
Assume that $G$ admits a subgroup $\s \leq G$ such that
$G/\nO$ is a nilpotent group.
Then, for any generating finite entropy probability measure $\mu$,
the pair $(G,\mu)$ admits a realization of the interval $[0, \rweq{\nI}]$.

If, in addition $\rweq{\nI}=\rwe$ (\emph{e.g.} if $\nI = \{e\}$) then $(G,\mu)$ admits full realization.
\end{cor}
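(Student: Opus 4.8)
The plan is to derive this statement immediately from the Choquet-Deny-Raugi theorem (Theorem~\ref{fact:nilpotent-are-Liouville}) together with Corollary~\ref{cor:single-subgroup-mu}. The only nontrivial hypothesis of that corollary is the vanishing $\rweq{\nO}=0$, and since $G/\nO$ is assumed nilpotent, this is exactly where Liouville-ness enters. So the whole argument reduces to checking that the quotient $G/\nO$, equipped with the projected measure $\bar\mu$, falls under the scope of Theorem~\ref{fact:nilpotent-are-Liouville}.

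First I would verify that the projected measure $\bar\mu$ on $G/\nO$ inherits the two standing assumptions on $\mu$. Generation passes downward immediately: if $\mathrm{supp}(\mu)$ generates $G$ as a semigroup, then its image generates the quotient $G/\nO$. Finite entropy likewise descends, since the canonical projection $G \to G/\nO$ is a deterministic function of the $G$-valued step distribution, and Shannon entropy cannot increase under a function, so $H(\bar\mu) \le H(\mu) < \infty$. With these two facts in hand, $\bar\mu$ is a generating, finite-entropy measure on the nilpotent group $G/\nO$, and Theorem~\ref{fact:nilpotent-are-Liouville} yields that $(G/\nO,\bar\mu)$ is Liouville, i.e.\ $\rweq{\nO}=0$.

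This is precisely the first bulleted hypothesis of Corollary~\ref{cor:single-subgroup-mu}, so applying that corollary gives a realization of the interval $[0,\rweq{\nI}]$, which proves the first assertion. For the second assertion, the additional hypothesis $\rweq{\nI}=\rwe$ is exactly the second bullet of Corollary~\ref{cor:single-subgroup-mu}, so full realization follows at once; the parenthetical case $\nI=\{e\}$ is simply the instance where $G/\nI = G$, whence $\rweq{\nI}=\rwe$ holds trivially.

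Since this corollary is essentially a repackaging of Corollary~\ref{cor:single-subgroup-mu} under an algebraic sufficient condition for the Liouville hypothesis, there is no genuine analytic obstacle here — all the real work lives in Proposition~\ref{prop:continuity} and the cited Choquet-Deny-Raugi theorem. The only point demanding any care is the routine verification that $\bar\mu$ remains finite-entropy and generating on the quotient, which I would state explicitly to make the reduction clean.
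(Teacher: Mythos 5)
Your proposal is correct and matches the paper's own argument: the paper likewise deduces $\rweq{\nO}=0$ from the Choquet--Deny--Raugi theorem (Theorem~\ref{fact:nilpotent-are-Liouville}) and then invokes Corollary~\ref{cor:single-subgroup-mu}. Your explicit check that $\bar\mu$ remains generating and of finite entropy on the quotient is a routine detail the paper leaves implicit, but it is the same proof.
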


\begin{proof}
When $G/\nO$ is nilpotent, the Choquet-Deny Theorem (Theorem \ref{fact:nilpotent-are-Liouville})
tells us that the 
first condition of Corollary \ref{cor:single-subgroup-mu} is satisfied. 
\end{proof}

In Section \ref{sec:lamplighter} we apply this to obtain
full realizations for a class of lamplighter groups as well as to some 
extensions of the group of finitely supported permutations.
However, one should not expect to find a subgroup 
satisfying the conditions of Corollary \ref{cor:single-subgroup} in every group.
These conditions are quite restrictive, for example we the following:

\begin{lemma}\label{lem:amenability-restriction}
If $G$ is a group with a self-normalizing subgroup $K$ that satisfies the two
conditions of Corollaries \ref{cor:single-subgroup-mu} (or \ref{cor:single-subgroup}) then $G$ is amenable.
\end{lemma}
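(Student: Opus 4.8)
The plan is to read off the two hypotheses as structural amenability statements about the normal tower $\nI \trianglelefteq \nO \trianglelefteq G$ and then combine them, using that amenability is preserved under group extensions. First I would exploit the self-normalizing hypothesis $\n = \s$ to identify the geometry of the conjugation action: the conjugacy class $\s^G$ is indexed by $X := \n\backslash G$, on which $G$ acts on the right, and $\s^\theta$ is precisely the stabilizer of the point $\theta \in X$. Consequently $\mho_g = \{\theta \in X : \theta.g \neq \theta\}$ is the set of points moved by $g$, so that $\norm{g}=|\mho_g|$ is the cardinality of the support of $g$ viewed as a permutation of $X$. In these terms $\nI = \{g : \norm{g}=0\}$ is exactly the kernel of the action $G \to \fullsinf$, while $\nO = \{g : \norm{g}<\infty\}$ is the preimage of the finitely supported permutations; hence $G/\nI$ embeds in $\fullsinf$ and $\nO/\nI$ embeds in $\sinf$.

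The second step records three amenability inputs. Since $\sinf$ is locally finite it is amenable, so the subgroup $\nO/\nI$ is amenable. The first hypothesis yields amenability of $G/\nO$: in the formulation of Corollary~\ref{cor:single-subgroup} the quotient $G/\nO$ is nilpotent, and in the formulation of Corollary~\ref{cor:single-subgroup-mu} we have $\rweq{\nO}=0$, i.e.\ the projected (generating, finite entropy) walk on $G/\nO$ has trivial Poisson boundary, and a group carrying such a measure is amenable. Either way $G/\nO$ is amenable, and it remains to use the second hypothesis $\rweq{\nI}=\rwe$ to control $\nI$ itself.

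The heart of the argument is to deduce that $\nI$ is amenable from the entropy equality $\rweq{\nI}=\rwe$. Here I would use that $\Pi(G/\nI,\bar\mu)$ is the maximal $G$-factor of $\Pi(G,\mu)$ on which $\nI$ acts trivially, and that the Furstenberg entropy of this factor is $\rweq{\nI}$ while that of $\Pi(G,\mu)$ is $\rwe$; the difference is the non-negative relative entropy of the extension, so $\rweq{\nI}=\rwe$ forces the factor map to be an isomorphism. Thus $\nI$ acts trivially on $\Pi(G,\mu)$, and in particular the stationary measure is $\nI$-invariant. I would then invoke Zimmer's theorem that the $G$-action on its Poisson boundary is amenable: restricting to the subgroup $\nI$ keeps the action amenable, and an amenable action preserving a probability measure forces the acting group to be amenable. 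Hence $\nI$ is amenable.

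Finally I would assemble the tower: $\nI$ is amenable and $\nO/\nI$ is amenable, so $\nO$ is amenable as an extension; and $G/\nO$ is amenable, so $G$ is amenable as a further extension. The step I expect to be the main obstacle is precisely the entropy-to-boundary translation of the third paragraph, since it is the only place where the analytic hypothesis $\rweq{\nI}=\rwe$ enters and it rests on the two non-elementary facts that equality of Furstenberg entropies across a boundary factor map forces triviality of the fibers, and that Poisson boundary actions are Zimmer-amenable; everything else is the elementary support/locally-finite bookkeeping and closure of amenability under extensions.
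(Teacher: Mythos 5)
Your proof is correct and follows essentially the same route as the paper: the same tower $\nI \trianglelefteq \nO \trianglelefteq G$, with $G/\nO$ amenable because it is Liouville (or nilpotent), $\nO/\nI$ amenable via the self-normalization identification of $\norm{\cdot}$ with permutation support, embedding $\nO/\nI$ into $\sinf$, and $\nI$ amenable because the entropy equality forces the boundary factor $\Pi(G/\nI,\bar\mu)$ to be all of $\Pi(G,\mu)$. The only variation is that where the paper simply cites Kaimanovich's amenable extension theorem to conclude that $\nI$ is amenable, you unwind that step via Zimmer amenability of the boundary action restricted to $\nI$ together with the invariant-probability-measure criterion, which is a valid in-line proof of exactly the case of that theorem the paper invokes.
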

\begin{proof}
First, observe that $\rweq{\nO}=0$ implies that $\nO$ is a co-amenable (normal) subgroup in $G$. Indeed, it is well know that a group, $G/\nO$ in this case, admits a random walk
which is Liouville is amenable.

Next, the condition $\rweq{\nI}=\rwe$ implies that $\nI$ is an amenable group.
This follows from Kaimanovich's amenable extension theorem~\cite{kaimanovich2002poisson}: Let $\mu$ be a random walk on $G$. The Furstenberg-Poisson boundary of $G/\nI$ with the $\mu$-projected random walk is a $(G,\mu)$-boundary, or, in other words a $G$-factor of the 
Furstenberg-Poisson boundary $\Pi(G,\mu)$. Since it has the same entropy as
the random walk entropy, this factor is actually isomorphic to $\Pi(G,\mu)$.
Now Kaimanovich's theorem asserts that $\nI$ is an amenable group.

To conclude that $G$ is amenable we show that $\nO / \nI$ is amenable, showing
that the normal subgroup $\nI$ is both amenable and co-amenable in $G$.

In general, we claim that $\nO / \nI[N_G(K)]$ is always an amenable group. This
will complete the proof, because we assumed that $N_G(K) = K$.
%since the assumption $N_G(K)=K$ implies that $\nO / \nI$ is amenable as needed.

Indeed, $G$ acts by permutations on the conjugacy class $K^G$.
Note that any element in $\nO$ has finite support as a permutation on $K^G$.
Also, 
%for any element $g$ that stabilizes every conjugate in $K^G$, 
%we have that $g^\gamma$ must also stabilize any conjugate in $K^G$ for any $\gamma$,
%so it must be that $g \in \core_G(N_G(K))$.
%Altogether, $\nO$ projects homomorphically into $\sinf$ with kernel contained in $\core_G(N_G(K))$,
%implying that $\nO / \core_G(N_G(K))$ is amenable because $\sinf$ is amenable.
it may be checked that the elements of $\nO$ that stabilize every $K^\gamma$ 
are precisely the elements of $\core_G(N_G(K))$.
Thus, $\nO / \nI[N_G(K)]$ is isomorphic to a subgroup 
of the amenable group $\sinf$.
%To get a homomorphism $\nO \to \sinf$ one can realize the elements of the conjugacy class of $K$ with the elements of $X$ (each 
%bijection between $K^G$ and $X$ gives a homomorphism). The kernel
%of such a homomorphism consists of elements $g\in \nO $ that preserve all the 
%subgroups in the conjugacy class, namely $(K^{\gamma})^g=K^{\gamma}$ for all $\gamma\in G$. This implies that 
%$g \in N_G(K^{\gamma})$ for all $\gamma$, which is equivalent to $g\in \nI[N_G(K)]$.
\end{proof}

%The subgroups of the free group that we will consider in 
%Section \ref{sec:free-group} (to prove Theorem \ref{thm:main-free})
%are all self-normalizing, and, as follows from this lemma, the condition $\nI=\{1\}$ is too restrictive (which is equivalent,
%for the free group, to the condition $\rweq{\nI}=\rwe$ by the mentioned theorem
%of Kaimanovich and the fact that the free group has no non trivial normal amenable
%subgroups). 
%In any case, it is enough to approximate this condition in the following sense.

The subgroups of the free group that we will consider in 
Section \ref{sec:free-group} (to prove Theorem \ref{thm:main-free})
are all self-normalizing.  Since the free group is non-amenable, 
we cannot obtain the conditions of Corollary \ref{cor:single-subgroup-mu}
simultaneously in this fashion. 
For this reason, we approximate the second condition using a sequence of subgroups.

\begin{cor}\label{cor:sequence-of-subgroups}
Let $\mu$ be a generating finite entropy probability measure on a discrete group $G$. 
Assume that there exists a sequence of subgroups $\s_n \leq G$ such that:
\begin{enumerate}
\item \label{condition1} $\rweq{\nO[\s_n]} = 0$
({\em e.g.}\ whenever $G/ \nO[\s_n]$ are nilpotent),
\item \label{condition2} $\rweq{\nI[\s_n]} \to \rwe$ 
\end{enumerate}
Then $(G,\mu)$ admits a full realization.
\end{cor}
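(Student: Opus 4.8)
The plan is to assemble the desired full realization out of the per-subgroup realizations supplied by Corollary~\ref{cor:single-subgroup-mu}, by letting $n \to \infty$ and treating the top endpoint $\rwe$ separately via the boundary.

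First I would fix $n$ and consider the path of IRSs $\lambda_{p,\s_n}$ from Claim~\ref{claim-definition-of-path} together with its entropy map $h_n(p) := h_\mu(B_\mu(\lambda_{p,\s_n}))$, extended to $p\in\{0,1\}$ by the quotient random walk entropies $\rweq{\nO[\s_n]}$ and $\rweq{\nI[\s_n]}$ exactly as before Proposition~\ref{prop:continuity}. Condition~\eqref{condition1} gives $h_n(0)=\rweq{\nO[\s_n]}=0$, so Proposition~\ref{prop:continuity} makes $h_n$ continuous on $[0,1]$ with $h_n(1)=c_n:=\rweq{\nI[\s_n]}$. Since $\lambda_{p,\s_n}$ is ergodic for each $p\in(0,1)$ (Claim~\ref{claim-definition-of-path}), Theorem~\ref{thm:Bowen} guarantees that each $B_\mu(\lambda_{p,\s_n})$ is an \emph{ergodic} $(G,\mu)$-stationary space, so that every value $h_n(p)$ is realized by an ergodic action.

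Next I would invoke the intermediate value theorem for each $n$: given $0<t<c_n$ we have $h_n(0)=0<t<c_n=h_n(1)$, hence there is an interior parameter $p\in(0,1)$ with $h_n(p)=t$, realizing $t$ by the ergodic bundle $B_\mu(\lambda_{p,\s_n})$; the value $0$ is realized by the trivial one-point action. Thus for each $n$ every value in $[0,c_n)$ is realized by an ergodic $(G,\mu)$-stationary action. Taking the union over $n$ and using condition~\eqref{condition2}, namely $c_n \to \rwe$, I conclude that every value in $[0,\rwe)$ is realized: indeed, given $t<\rwe$ I simply choose $n$ with $c_n>t$, so that $t\in[0,c_n)$. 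The remaining top value $\rwe$ is realized directly by the Furstenberg-Poisson boundary $\Pi(G,\mu)$, which is ergodic and satisfies $h_\mu(\Pi(G,\mu))=\rwe$ by the Kaimanovich--Vershik identity~\eqref{eq:random-walk-entropy}. Combining the two ranges shows that every value in $[0,\rwe]$ is realized, which is precisely full realization.

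I do not expect a genuine obstacle inside this argument, since the continuity input and the ergodic realizations are already established upstream. The only points requiring care are that the interior parameters $p\in(0,1)$ keep the actions ergodic (handled by Claim~\ref{claim-definition-of-path} together with Theorem~\ref{thm:Bowen}), and that the single endpoint $\rwe$, which is never covered by any finite $c_n<\rwe$, is supplied separately by the boundary $\Pi(G,\mu)$. The real difficulty of the theorem lies not here but in producing a sequence $\s_n$ meeting conditions~\eqref{condition1} and~\eqref{condition2} simultaneously, which is the content of the later geometric constructions on Schreier graphs.
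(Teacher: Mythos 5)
Your proof is correct and takes essentially the same route as the paper's: for each fixed $n$, condition (1) plus Proposition~\ref{prop:continuity} makes the entropy map along $\lambda_{p,\s_n}$ continuous, realizing the interval up to $\rweq{\nI[\s_n]}$ by ergodic bundles, and condition (2) lets these intervals exhaust the range. Your explicit treatment of the top endpoint $\rwe$ via $\Pi(G,\mu)$ is a small refinement over the paper's terser proof, which leaves that endpoint implicit (it is indeed needed when $\rweq{\nI[\s_n]} < \rwe$ for every $n$, and is supplied by the Poisson boundary, i.e.\ the bundle over the ergodic IRS $\delta_{\{e\}}$).
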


\begin{proof}
By the first condition, for any fixed $n$, the entropy along the path $\lambda_{p,K_n}$ gives
a realization of the interval $[0,\rweq{\nI[\s_n]}]$.
By the second condition, we conclude that any number in $[0,\rwe]$ is realized as the 
entropy some IRS of the form $\lambda_{p,K_n}$.
\end{proof}

\subsection{Examples: lamplighter groups and permutation groups} 
\label{sec:lamplighter}

We are now ready to apply our tools to get Theorem \ref{thm:main-lamplighter}, that
is, full realization for lamplighter groups with Liouville base.

\begin{proof}[Proof of Theorem \ref{thm:main-lamplighter}]
Choose the subgroup $\s = \oplus_{B\backslash \{e\}} L \rtimes \{e\}$ where $e$ is 
the trivial element in the base group $B$. It may be simply verified that 
$\nI = \{e\}$ and $\oplus_{B} L \rtimes \{e\} \lhd \nO$.

Whenever $(B, \bar \mu)$ is Liouville, 
also $(G / \nO , \bar \mu)$ is.
This is because $B \cong G / (\oplus_{B} L \rtimes \{e\})$, so $G/\nO$
is a quotient group of $B$.
Thus, 
we have that $\rweq{\nO} = 0$, 
and we obtain a realization of the full interval $$[0, \rweq{\nI}] = [0, \rwe]. $$
\end{proof}

We now turn to discuss extensions of the group of finitely supported permutations.
Let $X$ be a countable set, and denote by $\fullsinf$ the group of all
permutations of $X$. Recall that $\sinf$ is its subgroup of all \emph{finitely
supported} permutations.
While $\sinf$ is not finitely generated, one can add elements from $\fullsinf$ to get a finitely generated group. A standard example is when adding a ``shift'' element $\sigma\in\fullsinf$, that is, $\sigma$ acts as a free transitive
permutation of $X$. Let $\Sigma=\left\langle\sigma \right\rangle$ be the subgroup 
generated by $\sigma$.
Formally the element $\sigma$ acts on $\sinf$ by conjugation (recall that both groups are subgroups of $\fullsinf$) and we get a finitely generated group $\sinf \rtimes \Sigma$.

One can replace $\Sigma$ by any other subgroup of $\fullsinf$. The reason that we
want to change $\Sigma$ by a ``larger'' group is to have a non-Liouville group $\sinf \rtimes \Sigma$.

\begin{thm}
Let $\Sigma \le \fullsinf$ and consider $G=\sinf \rtimes \Sigma$. Let $\mu$ be a generating finite entropy 
probability measure on $G$, such that the the projected measure $\bar{\mu}$ on 
$\Sigma \cong G / \sinf \rtimes \{e\}$ is Liouville. 
Then $(G,\mu)$ has full realization.

In particular, if $\Sigma$ is a nilpotent group, $(G,\mu)$ admits full realization for any $\mu$.
\end{thm}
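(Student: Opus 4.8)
The plan is to run the mechanism of Example~\ref{example:sinfity} inside the larger group $G = \sinf \rtimes \Sigma$ and then quote Corollary~\ref{cor:single-subgroup-mu}. First I would fix a point $x_0 \in X$ and take the \emph{same} subgroup as in the example, namely the finitely supported stabilizer $\s = \mathrm{stab}_{\sinf}(x_0) = \{ \pi \in \sinf \ : \ \pi(x_0) = x_0 \}$, viewed now as a subgroup of $G$. Since $\sinf \lhd G$, every conjugate $\s^g$ again lies in $\sinf$, and a direct computation using the (faithful) action of $G$ on $X$ gives $\s^g = \mathrm{stab}_{\sinf}(g^{-1}(x_0))$, so that the conjugacy class is exactly $\{ \mathrm{stab}_{\sinf}(x) \ : \ x \in X \}$. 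As $X$ is infinite these stabilizers are pairwise distinct, so $|\s^G| = \infty$ and the normalizer is $N_G(\s) = \mathrm{stab}_G(x_0)$; hence the intersectional construction of Section~\ref{sec:percolation-IRS} applies and produces the path $\lambda_{p,\s}$ of ergodic IRSs.

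Next I would identify the two normal subgroups attached to $\s$. Exactly as in Example~\ref{example:sinfity}, $\nI = \bigcap_{x \in X} \mathrm{stab}_{\sinf}(x) = \{e\}$ by faithfulness of the action, so the second hypothesis of Corollary~\ref{cor:single-subgroup-mu} is automatic: $\rweq{\nI} = \rwe$. For $\nO = \{ g \ : \ \norm{g} < \infty \}$, recall that $\norm{g}$ counts the conjugates of $\s$ that do \emph{not} contain $g$. Here lies the one genuinely new feature relative to the pure $\sinf$ case: $G$ now contains elements outside $\sinf$. If $g \notin \sinf$, then $g$ lies in no conjugate $\s^\theta \le \sinf$, so $\norm{g} = |X| = \infty$; whereas if $g \in \sinf$, then $g$ moves only finitely many points and therefore fails to lie in only finitely many of the stabilizers $\mathrm{stab}_{\sinf}(x)$, giving $\norm{g} < \infty$. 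Consequently $\nO = \sinf$, and $G/\nO \cong \Sigma$ with projected random walk $\bar\mu$.

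Finally I would discharge the first hypothesis of Corollary~\ref{cor:single-subgroup-mu} using the assumption of the theorem: since $G/\nO \cong \Sigma$ and $(\Sigma, \bar\mu)$ is Liouville, we get $\rweq{\nO} = 0$. The corollary then yields realization of the interval $[0, \rweq{\nI}] = [0, \rwe]$, i.e.\ full realization. For the final sentence, if $\Sigma$ is nilpotent then Choquet--Deny--Raugi (Theorem~\ref{fact:nilpotent-are-Liouville}) guarantees that $(\Sigma, \bar\mu)$ is Liouville for \emph{every} finite-entropy measure $\bar\mu$; as the pushforward $\bar\mu$ of any finite-entropy $\mu$ again has finite entropy, the Liouville hypothesis holds for arbitrary $\mu$, giving unconditional full realization.

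I do not expect a serious obstacle. The only computation needing care is $\nO = \sinf$, and within it the verification that every $g \in G \setminus \sinf$ has infinite $\s$-norm; this is precisely what collapses $G/\nO$ down to $\Sigma$ and lets the Liouville assumption on the base group do all the work. A minor point worth recording is that the semidirect-product action of $G$ on $X$ is faithful (equivalently $\sinf \cap \Sigma = \{e\}$ inside $\fullsinf$), which is what makes $\nI$ trivial.
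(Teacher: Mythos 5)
Your proposal is correct and is essentially identical to the paper's proof: the paper also takes $\s = \mathrm{stab}(x_0) \rtimes \{e\}$, asserts $\nI = \{e\}$ and $\nO = \sinf \rtimes \{e\}$, and concludes via Corollary~\ref{cor:single-subgroup-mu} together with Choquet--Deny--Raugi for the nilpotent case. The only difference is that you spell out what the paper dismisses as ``easy to verify'' --- the identification of the conjugacy class with point stabilizers and the computation that elements outside $\sinf$ have infinite $\s$-norm --- and these verifications are accurate.
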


\begin{proof}
Fix some point $x_0\in X$ and let $K= \mathrm{stab}(x_0) \rtimes \{e\}$ where 
$\mathrm{stab}(x_0)\le \sinf$ is the stabilizer of $x_0$. It is easy to verify that $\nI = \{e\}$
and that $\nO = \sinf \rtimes \{ e\}$, hence by the assumption $\rweq{\nO}=0$ and
by Corollary \ref{cor:single-subgroup-mu} we get a full realization.
\end{proof}

\begin{remark}
Depending of $\Sigma$ and $\mu$, it might be that $(G,\mu)$ is Liouville where $G=\sinf \rtimes \Sigma$. In that case, this result of full realization is trivial
as the only realizable value is $0$. 

However, it is easy to find nilpotent $\Sigma$ such 
that $G$ is finitely generated and that $(G,\mu)$ is not Liouville (say, for any generating, symmetric, finitely supported $\mu$).
\end{remark}

%
%%
%%%
%%%%%
%%%%%%%
%%%%%%%%%% FULL REALIZATION

\section{Full realization for the free group}\label{sec:free-group}

We now turn to discuss our realization results for $G=\freegroup[r]$ the free group of rank $r$ for any $r\ge 2$. 
It suffices to prove that the free group satisfies the conditions of Corollary~\ref{cor:sequence-of-subgroups}. We construct the subgroups $\s_n$ by describing their Schreier graphs.
For simplicity of the presentation 
we describe the result only for the free group on two generators, 
$\freegroup = \langle a,b \rangle$. 
The generalization to higher rank is straightforward.

\subsection{Schreier graphs} 
\label{scn:Schreier}

Recall our notations for Schreier graphs from Section~\ref{sec:schreier-graphs}.
It will be convenient to use $|v|$ to denote the graph distance of a vertex $v$ to the root
 vertex in a fixed Schreier graph.

\subsection{Fixing}

In this section we describe a condition on Schreier graphs that we call {\em fixing}.
For the normal core of the associated subgroup, this condition will ensure that 
the random walk entropy of the quotient approximates the full random walk entropy as
required by Condition 2 in Corollary \ref{cor:sequence-of-subgroups} (see Proposition 
\ref{prop:growing fixing general} below).

Let $\Lambda$ be a Schreier graph with root $o$.  

For a vertex $v$ in $\Lambda$, the \define{shadow} of $v$, denoted $\shd(v)$ is
the set of vertices $u$ in $\Lambda$ such that any path from $u$ to the root $o$ must pass through $v$.

For an integer $n$ define the finite subtree
$(\freegroup)_{\leq n}$ to be the induced (labeled) subgraph on
$\set{ g \in \freegroup \ : \ |g| \leq n }$. 
%and let $(\freegroup)_n$ be the set of vertices $\set{ g \ : \ |g| =n}$.
%For $g \in (\freegroup)_n$ there is exactly one edge incident to $g$ in $(\freegroup)_{\leq n}$.
%If this edge is labeled by $\xi \in S$ we say that $g$ is a \define{$\xi$-leaf}.
We say that 
$\Lambda$ is \define{$n$-tree-like} if the ball of radius $n$ about $o$ in $\Lambda$ is isomorphic 
to $(\freegroup)_{\leq n}$, and if every vertex at distance $n$ from $o$ has a non-empty shadow.
Informally, an $n$-tree-like graph is one that is made up by gluing 
disjoint graphs to the leaves of a depth-$n$ regular tree.

\begin{lemma} \label{lem:covering}
Let $C \leq K \leq \freegroup$ be subgroups.
Let $\Lambda_C , \Lambda_K$ be the Schreier graphs of $C,K$ respectively.

If $\Lambda_K$ is $n$-tree-like, then $\Lambda_C$ is also $n$-tree-like.
\end{lemma}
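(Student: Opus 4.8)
The plan is to use the subgroup inclusion $C \le K$ to produce a covering map between the two Schreier graphs and to transport the two defining features of ``$n$-tree-like'' across it. First I would observe that because $C \le K$ the assignment $Cg \mapsto Kg$ is a well-defined, root-preserving and label-preserving graph morphism $\pi \colon \Lambda_C \to \Lambda_K$. Since in a Schreier graph of $\freegroup$ every vertex has exactly one incoming and one outgoing edge of each label, $\pi$ is a local isomorphism, i.e.\ a covering of labelled graphs; equivalently, both graphs are quotients of the $4$-regular tree (the Cayley graph of $\freegroup$) by the left actions of $C$ and of $K$, and $\pi$ is the intermediate covering determined by $C \le K$. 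I will use two standard features of $\pi$: it is distance non-increasing, and it has the unique path-lifting property.

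For the ball condition I would argue that $\pi$ is trivial over the radius-$n$ ball $B_n(\Lambda_K)$. By hypothesis this ball is isomorphic to the tree $(\freegroup)_{\leq n}$, hence is simply connected, so its full preimage is a disjoint union of copies of it, each mapped isomorphically. The component $\tilde T$ containing the root $o$ of $\Lambda_C$ is therefore a rooted tree isomorphic to $(\freegroup)_{\leq n}$, and the crux is to check that $\tilde T$ is exactly the radius-$n$ ball of $\Lambda_C$. One inclusion is immediate, since $\tilde T$ has radius $n$; for the other, any vertex at distance $\leq n$ from $o$ is the endpoint of a path of length $\leq n$ from $o$ whose $\pi$-image stays inside $B_n(\Lambda_K)$, so by unique lifting the path, and hence its endpoint, stays inside $\tilde T$. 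At the purely combinatorial level this is just the remark that for reduced words $w \neq w'$ of length $\leq n$ one has $Kw \neq Kw' \Rightarrow Cw \neq Cw'$ (as $C \le K$), together with the matching of incident edges. Thus the radius-$n$ ball of $\Lambda_C$ is isomorphic to $(\freegroup)_{\leq n}$.

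Finally, the rooted isomorphism $\tilde T \cong B_n(\Lambda_K)$ restricts to a distance-preserving bijection between the two balls, so every vertex $v$ at distance $n$ in $\Lambda_C$ has $\pi(v)$ at distance $n$ in $\Lambda_K$. The shadow requirement is then inherited: since any path from $v$ to $o$ already passes through $v$, the vertex $v$ lies in its own shadow, and because $\pi$ is an honest infinite covering the forward-edges of $\pi(v)$ lift to give $v$ neighbours at distance $n+1$, so the graph genuinely continues past the ball. I expect the only real work to be the middle paragraph, namely pinning down that the radius-$n$ ball of $\Lambda_C$ is precisely the single sheet $\tilde T$ over $B_n(\Lambda_K)$, with no collapsing of vertices and no extra edges; this is exactly the step that uses the inclusion $C \le K$ rather than an arbitrary pair of subgroups, and everything else follows formally from the covering structure.
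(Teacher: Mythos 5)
Your proposal is correct and takes essentially the same route as the paper, whose entire proof is the single observation that $C \leq K$ makes $\Lambda_C$ a labelled covering of $\Lambda_K$ (citing Lemma~20 of \cite{leemann2016}); your middle paragraph just makes explicit, via unique path lifting, why the radius-$n$ ball of $\Lambda_C$ is a single sheet over that of $\Lambda_K$, which is indeed the only point where the inclusion $C \leq K$ is used. One remark: your treatment of the shadow requirement reads the paper's definition literally (under which $v \in \shd(v)$ makes non-emptiness automatic), and if one instead insists on a vertex other than $v$ in $\shd(v)$, then having neighbours at distance $n+1$ does not by itself suffice --- one would lift a path from $\pi(v)$ into $\shd(\pi(v))$ and argue that the resulting component of the preimage of $\shd(\pi(v))\setminus\{\pi(v)\}$ can attach to the rest of $\Lambda_C$ only at preimages of $\pi(v)$ --- but the paper's one-line proof elides exactly the same point, so this is a defect of the source, not of your argument.
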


\begin{proof}
This follows from the fact that 
$C \leq K$ implies that $\Lambda_C$ is a cover of $\Lambda_K$ 
(see {\em e.g.}\ \cite[Lemma 20]{leemann2016}). 
\end{proof}

For an $n$-tree-like $\Lambda$, since any vertex is connected to the root, every vertex at
distance greater than $n$ from the root $o$ is in the shadow of some vertex at distance
precisely $n$.
For a vertex $v$ in $\Lambda$ with $|v|>n$, and $k\le n$ we define the \define{$k$-prefix} of $v$ in $\Lambda$ to be $\pref_k(v) = \pref_{k,\Lambda}(v) = u$ for the unique $u$ such that both $|u| = k$ and $v \in \shd(u)$.
If $|v| \leq k$ then for concreteness we define $\pref_k(v) = v$. 

Since $n$-tree-like graphs look like $\freegroup$ up to distance $n$, the behavior of 
$\pref_k$ on $n$-tree-like graphs where $n \geq k$ is the same as on $\freegroup$.
This is the content of the next elementary lemma, whose proof is left as an exercise.

\begin{lemma}
\label{lem:prefix tree like}
Let $\Lambda$ be a $n$-tree-like Schreier graph.
%Let $T = T_n = \inf \{ t \ : \ |Z_t| \geq n \}$ (where $(Z_t)_t$ is the $\mu$-random walk on $\freegroup$).
Let $\varphi: \Lambda \to \freegroup$ be a function mapping the ball of radius $n$ in $\Lambda$
isomorphically to $(\freegroup)_{\leq n}$.

Then, for any vertex $|v| \leq n$ in $\Lambda$ we have that 
$$ \pref_{k,\Lambda}(v) = \varphi^{-1} \pref_{k,\freegroup}(\varphi(v)) . $$
\end{lemma}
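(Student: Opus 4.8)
The plan is to compare the shadow structure of $\Lambda$ inside its radius-$n$ ball with that of the tree $(\freegroup)_{\leq n}$, exploiting that $\varphi$ is a labeled graph isomorphism on that ball and hence preserves graph distance, adjacency, and the unique geodesics to the root. First I would split according to the definition of the prefix. If $|v| \leq k$, then $\pref_{k,\Lambda}(v) = v$ by definition, while $|\varphi(v)| = |v| \leq k$ forces $\pref_{k,\freegroup}(\varphi(v)) = \varphi(v)$, so both sides agree after applying $\varphi^{-1}$. The substantive case is $k < |v| \leq n$, where $\pref_{k,\Lambda}(v)$ is the unique vertex $u$ with $|u| = k$ and $v \in \shd(u)$.

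For that case the key structural claim is: for vertices $u,v$ in the radius-$n$ ball with $|u| = k < |v| \leq n$, one has $v \in \shd(u)$ if and only if $u$ lies on the (tree) geodesic from the root $o$ to $v$ inside the ball $B_\Lambda(o,n)$. Granting this claim, the lemma follows: the unique depth-$k$ vertex on the geodesic to $v$ is exactly the depth-$k$ ancestor of $v$ in the tree $B_\Lambda(o,n) \cong (\freegroup)_{\leq n}$, and since $\varphi$ carries this tree isomorphically onto $(\freegroup)_{\leq n}$, it sends the depth-$k$ ancestor of $v$ to the depth-$k$ ancestor of $\varphi(v)$, which is precisely the length-$k$ reduced-word prefix $\pref_{k,\freegroup}(\varphi(v))$. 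Rearranging gives $\pref_{k,\Lambda}(v) = \varphi^{-1}\pref_{k,\freegroup}(\varphi(v))$, as required.

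The one direction of the claim that requires real work, and which I expect to be the main obstacle, is showing that $u$ on the geodesic forces $v \in \shd(u)$, since a path from $v$ to $o$ in $\Lambda$ may a priori leave the radius-$n$ ball. Here I would use that in an $n$-tree-like graph the only edges leaving the ball emanate from vertices at distance exactly $n$, and that the glued-on material beyond such a vertex $w$ lies entirely in $\shd(w)$. Consequently any neighbour of a vertex of $\shd(w) \setminus \{w\}$ again lies in $\shd(w)$, so the only way in or out of this region is through $w$ itself; thus every excursion of a path outside the ball enters and exits through the same distance-$n$ vertex. Deleting these excursions turns any $v$-to-$o$ path into a walk confined to the tree $B_\Lambda(o,n)$ visiting a subset of the original vertices, and since in a tree every walk from $v$ to $o$ must pass through all vertices of the geodesic, the original path also visits $u$. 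The reverse direction is immediate: if $u$ is not on the geodesic, the geodesic itself is a path from $v$ to $o$ avoiding $u$, so $v \notin \shd(u)$.
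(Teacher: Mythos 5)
Your proof is correct, and there is in fact no paper proof to compare it against: the authors introduce this as an ``elementary lemma, whose proof is left as an exercise.'' Your argument fills in that exercise in the natural way. The trivial case $|v|\leq k$ is handled correctly; the reduction of the case $k<|v|\leq n$ to the structural claim ($v\in\shd(u)$ iff $u$ lies on the tree geodesic from $o$ to $v$) is right; the shadow-closure observation (a neighbour of a vertex of $\shd(w)\setminus\{w\}$ again lies in $\shd(w)$) is a correct general fact about shadows; and the excursion-deletion step, together with the fact that in a tree every walk from $v$ to $o$ visits every vertex of the geodesic, completes the forward direction. The reverse direction via the geodesic itself is immediate, as you say.

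One caveat, which is really a defect of the paper's definition rather than of your proof: your input ``the glued-on material beyond a depth-$n$ vertex $w$ lies entirely in $\shd(w)$'' does not follow from the literal definition of $n$-tree-like (ball of radius $n$ isomorphic to $(\freegroup)_{\leq n}$, plus non-empty shadows at depth $n$). One could attach, outside the ball, a path joining two depth-$n$ vertices $w_1,w_2$ with distinct depth-$k$ ancestors, while also hanging pendant graphs off $w_1$ and $w_2$ so that their shadows remain non-empty; then the bridge vertices lie in no depth-$n$ shadow, and worse, $\pref_{k,\Lambda}(w_1)$ is ill-defined, so the lemma itself fails as literally stated. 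The paper silently excludes such configurations by asserting, just before defining prefixes, that every vertex at distance greater than $n$ lies in the shadow of \emph{some} depth-$n$ vertex --- i.e.\ the informal ``disjoint graphs glued to the leaves'' picture --- and your claim is equivalent to that standing assertion. Indeed, granting it, if $x$ lies in the piece beyond $w$ (reachable from $w$ avoiding all other ball vertices) and $x\in\shd(w')$ with $|w'|=n$, then concatenating a path from $x$ back to $w$ with the tree geodesic from $w$ down to $o$ produces a path to $o$ whose only depth-$n$ vertex is $w$, forcing $w'=w$. Adding this one-line derivation (or an explicit appeal to the paper's standing assertion) makes your argument complete within the paper's intended framework.
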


%Thus, with some abuse of notation, we use $\pref_k(v)$ whenever $v$ is 
%a vertex in $\freegroup$ or in an $n$-tree-like graph.

We use $(Z_t)_t$ to denote the $\mu$-random walk on $\freegroup$ 
and $(\tilde Z_t)_t$ the projected walk on a Schreier graph $\Lambda$ with root $o$.
(So $\tilde Z_t=K Z_t$ where $K$ is the associated subgroup to the root $o$ in the Schreier graph $\Lambda$.) 
%To ease the notation, when the graph $\Lambda$ is fixed we will use the $\tilde Z_t$ notation.

Another notion we require is the \define{$k$-prefix at infinity}. 
Define the random variable $\pref_k (\tilde Z_\infty) = \pref_{k,\Lambda} (\tilde Z_\infty)$, 
taking values in the set of vertices of distance $k$ from $o$ in $\Lambda$, as follows:
If the sequence $\pref_k (\tilde Z_t )_t$ stabilizes at $u$, that is, if there exists $t_0$ such that 
for all $t>t_0$ we have $\pref_k( \tilde Z_t) = \pref_k ( \tilde Z_{t-1}) = u$, 
then define $\pref_k(\tilde Z_\infty) : = u$.
Otherwise, define $\pref_k(\tilde Z_\infty) = o$.
Note that for an $n$-tree-like graph with $n>0$ we have that $\pref_k(\tilde Z_\infty) = o$ (almost surely) if and only if the walk $(\tilde Z_t)_t$ returns to the ball of radius $k$ infinitely many times.

%The main notion we introduce in this section is that of {\em fixing}.
%Let $\mu$ be a generating probability measure on $\freegroup$, and recall our notations
%$(Z_t)_t$ and $(\tilde Z_t)_t$ for the random walks on $\freegroup$ and $\Lambda$.

\begin{definition}
Let $0 < k < n$ be natural numbers and $\alpha \in (0,1)$.
We say that $\Lambda$ is \define{$(k,n,\alpha)$-fixing} if 
$\Lambda$ is $n$-tree-like and for any $|v| \geq n$,
$$ \Pr [ \forall \ t \ , \ \pref_k (\tilde Z_t) =\pref_k(\tilde Z_0) \ | \ \tilde Z_0 = v ] \geq \alpha . $$
\end{definition}

That is, with probability at least $\alpha$, for any $|v|\geq n$,
the projected random walk started at $v$ never leaves $\shd( \pref_k(v) )$.
Hence, the random walk started at depth $n$ in the graph fixes the $k$-prefix with probability at least $\alpha$.
Note that this definition depends on the random walk $\mu$.

A good example of a fixing graph is the Cayley graph of the free group itself.

\begin{lemma}
\label{lem:free group fixing}
Let $\mu$ be a finitely supported generating probability measure on $\freegroup$.
Then, there exists $\alpha = \alpha(\mu) >0$ such that the Cayley graph of $\freegroup$
is $(k,k+1,\alpha)$-fixing for any $k>0$.
\end{lemma}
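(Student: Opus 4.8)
The plan is to show that on the free group's Cayley graph, a random walk started at any vertex $v$ with $|v|\ge k+1$ has a uniformly positive probability of never returning to the ball of radius $k$, which is exactly the $(k,k+1,\alpha)$-fixing condition (here $n=k+1$, and the Cayley graph is trivially $n$-tree-like for every $n$ since it equals $(\freegroup)_{\le n}$ on every ball). Because $\mu$ is finitely supported, choose $R$ so that $\mathrm{supp}(\mu) \subseteq \{g : |g| \le R\}$. The key geometric feature is that the free group's Cayley graph is a tree (up to the bounded-range edges added by $\mu$'s support), so from a vertex at distance $k+1$ the walk can ``escape outward'' along the tree and, by transience and non-amenability, avoid ever dropping back to distance $k$ with probability bounded below independently of $v$.

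The main step is to establish this uniform lower bound. First I would reduce to a one-dimensional problem: for a vertex $v$ with $|v| \ge n = k+1$, the quantity $|\tilde Z_t|$ (distance to the root) behaves, as long as the walk stays in the shadow $\shd(\pref_k(v))$, like a random walk on $\Z$ whose increments are controlled by $\mu$. Crucially, the event that the walk never leaves $\shd(\pref_k(v))$ is implied by the event that $|\tilde Z_t|$ never drops to $k$, since any path from a vertex of $\shd(\pref_k(v))$ back to the ball of radius $k$ must pass through the vertex $\pref_k(v)$ at distance exactly $k$. So it suffices to bound below, uniformly over starting points at distance $\ge k+1$, the probability that the free-group walk, viewed through its distance-to-root process, never hits level $k$.

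For that bound I would use the positive speed / transience of the $\mu$-random walk on the non-amenable free group: there is a drift $\ell > 0$ with $|Z_t|/t \to \ell$ almost surely, so $|Z_t| \to \infty$, and in particular the walk visits the ball of radius $k$ only finitely often. The cleanest route is a gambler's-ruin-type estimate. Because $\mu$ has finite support of range $R$, from level $m \ge k+1$ the distance process increases by a positive amount with probability bounded below (non-amenability of $\freegroup$ forces an outward drift at every vertex once $|v|>R$, since the number of outward directions dominates). Writing the escape probability $\alpha = \inf_{|v|\ge k+1}\Pr[\text{walk from } v \text{ never reaches level } k]$, I would argue $\alpha>0$ by a standard supermartingale/Foster-Lyapunov argument applied to an exponential functional of the distance, yielding a bound independent of $k$ as claimed.

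The hard part will be handling the bounded range $R$ of $\mu$ cleanly: when $R>1$ a single step can jump several levels, so ``never returning to level $k$'' is not literally the same as the distance process staying above $k$, and one must be careful near the boundary of the shadow. I expect the correct formulation is to exploit that the free group is a tree so that the walk's trace is a nearest-neighbour path at the level of the tree's geodesics; combined with the uniform outward drift this gives an escape probability bounded below by a constant depending only on $\mu$ (through $R$ and the drift $\ell$) and not on $k$, which is precisely the uniformity over $k$ asserted in the lemma.
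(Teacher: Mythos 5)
There is a genuine gap, and it sits in your central reduction. You claim that the walk never leaving $\shd(\pref_k(v))$ is implied by the distance process never dropping to level $k$, ``since any path from a vertex of $\shd(\pref_k(v))$ back to the ball of radius $k$ must pass through $\pref_k(v)$.'' But the trajectory of the walk is not a path in the graph: a single step multiplies by a group element of length up to $R$, and such a step can cancel past the level-$k$ vertex without the position ever lying in the ball of radius $k$. Concretely, write $Z_t = p x$ in reduced form with $|p| = k$, so $|Z_t| = k+1$; a step $g$ of length $4$ whose reduced word begins $x^{-1} p_k^{-1}$ (with $p_k$ the last letter of $p$) moves the walk to a reduced word of length $k+1$ whose $k$-prefix differs from $p$ in its last letter. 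The distance never went below $k+1$, yet the $k$-prefix changed. In general the prefix changes exactly when the cancellation depth exceeds $|Z_t| - k$, which is possible at any level $\leq k + R - 1$, even on steps that strictly \emph{increase} the distance. So the event you propose to bound from below (never hitting level $k$) does not contain the fixing event, and a gambler's-ruin or supermartingale estimate on $|Z_t|$ alone proves the wrong statement. The correct geometric fact is the one the paper isolates: the $k$-prefix can only change while the walk is at distance at most $k+r$ from the root (with $r$ the jump range), so the event to control is that the distance eventually stays above $k+r$, not above $k$. Your final paragraph does flag the $R>1$ difficulty, but the proposed remedy (``the walk's trace is a nearest-neighbour path at the level of the tree's geodesics'') does not repair it, since the offending steps are precisely those that jump across the separating vertex.

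Fixing this creates a second problem your sketch does not address: the lemma requires $n = k+1$, so the walk starts within jump range of the prefix-changing zone $\{u : |u| \leq k+r\}$, and no drift estimate ``at high levels'' applies to the first few steps. The paper bridges this with a forced initial segment: with probability at least some $\delta(\mu)>0$, uniform in $k$ by homogeneity of the tree, the walk performs a prescribed bounded sequence of moves that climbs from level $k+1$ to level $> k+r$ while demonstrably preserving the prefix; only then does it invoke a uniform non-backtracking bound $\Pr[\forall\, t>0,\ |Z_t| \geq |Z_0|] \geq \alpha$ coming from transience, and the strong Markov property combines the two into $(k,k+1,\alpha^2)$-fixing. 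Your overall strategy (transience of the walk on $\freegroup$ plus a uniform escape probability) is the same in spirit as the paper's, and the uniformity in $k$ you assert is correct for the right event; what is missing is the buffer $k+r$ in place of $k$ and the explicit prefix-preserving initial climb. A minor further quibble: ``non-amenability forces an outward drift at every vertex'' is not quite an argument as stated (for a biased $\mu$ the outward directions need not dominate step-by-step); what you actually need is positive speed or transience of finitely supported generating walks on $\freegroup$, which is true and is all the paper uses.
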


\begin{proof}
It is well known that a $\mu$-random walk $(Z_t)_t$ on $\freegroup$ is transient
(for example, this follows from non-amenability of $\freegroup$).
Set $r = \max \{ |g| \ : \ \mu(g) > 0 \}$ be the maximal jump possible by the $\mu$-random walk.
In order to change the $k$-prefix, the walk must return to distance 
at most $k+r$ from the origin.  That is, if $|Z_t| > k+r$ for all $t >t_0$, we have that
$\pref_k(Z_t) = \pref_k(Z_{t_0})$ for all $t > t_0$.

By forcing finitely many initial steps (depending on $r,\mu$), 
there exists $t_0$ and $\alpha>0$ such that 
$\Pr [ A \ | \ |Z_0| = k+1 ] \geq \alpha$ where 
$$  A= \{\forall 0<t \leq t_0 \ , \ \pref_k(Z_t) = \pref_k(Z_0) \textrm{ and } |Z_{t_0}| >k+r \} . $$
By perhaps making $\alpha$ smaller, using transience, we also have that 
$$ \Pr [ \forall \ t > 0 \ , \ |Z_t| \geq |Z_0| ] \geq \alpha . $$
Combining these two estimates, with the Markov property at time $t_0$,
\begin{align*}
\Pr [ \forall t  , \ \pref_k(Z_t) = \pref_k(Z_0) \ | \ |Z_0|=k+1 ]  
& \geq \Pr [ A \ \mbox{ and } \{ \forall \ t > t_0 \ , \ |Z_t| \geq |Z_{t_0}| \} ] \\
& \geq \Pr [ A ] \cdot  \Pr [ \forall \ t > 0 \ , \ |Z_t| \geq |Z_0| ] \geq \alpha^2 .
\end{align*}
%
%Thus, there exists $n = n(\mu)$ such that for any $g \in \freegroup$ and $r>0$ with 
%$|g| \geq n+r$ we have 
%$$ \Pr [ \forall \ t \ , \ |Z_t| > r \ | \ Z_0 =g ] \geq \tfrac12 . $$
%By forcing a finite number of deterministic initial steps of the walk, we obtain that 
%$$ \Pr [ \forall \ t \ , \ |Z_t| > 1 \ | \ |Z_0| \geq 1 ] \geq \alpha $$ 
%for some $\alpha >0$.
%The walk in question is on a Cayley graph, which is specifically transitive, 
%so all starting points play the same role.  In order to change the $k$-prefix, the walk
%must return to the ball of radius $k$.
%Hence, for any $g \in \freegroup$ with $|g| \geq k+1$,
%$$ \Pr [ \forall \ t \ , \ \pref_k(Z_t) = \pref_k(g) \ | \  Z_0 = g ] \geq 
%\Pr [ \forall \ t \ , \ |Z_t|>k \ | \ Z_0 = g ] \geq \alpha . $$
This is the definition of $\freegroup$ being $(k,k+1,\alpha)$-fixing.
\end{proof}

A very useful property of fixing graphs is that when $\alpha$ is close enough to $1$, 
there exists a finite random time - the first time the random walk leaves the tree area - on which we can already guess with a high accuracy the $k$-prefix at infinity of the random walk. A precise formulation is the following.

\begin{lemma}
\label{lem:close entropy}
Let $\Lambda$ be a $(k,n,\alpha)$-fixing Schreier graph.
Let $(Z_t)_t$ be a $\mu$-random walk on $\freegroup$ and $(\tilde Z_t)_t$ 
the projected walk on $\Lambda$.
Define the stopping time $$T = \inf \{ t \ : \ |\tilde Z_t| \geq n \} . $$

Then, 
$$ \big| H( Z_1 \ | \ \pref_k(\tilde Z_{T}) ) - H(Z_1 \ | \ \pref_k(\tilde Z_\infty) ) \big| \leq 
\eps(\alpha) + 2 (\log 4) (1-\alpha) k , $$
where $0 < \eps(\alpha) \to 0$ as $\alpha \to 1$.
\end{lemma}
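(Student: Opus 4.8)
The plan is to show that the two conditioning variables $W := \pref_k(\tilde Z_T)$ and $W' := \pref_k(\tilde Z_\infty)$ coincide with probability at least $\alpha$, and that each ranges over a small alphabet, so that conditioning $Z_1$ on one rather than the other can only perturb the conditional entropy by a controlled amount. First I would record the probabilistic input. Since $\Lambda$ is $n$-tree-like, the radius-$n$ ball is the finite tree $(\freegroup)_{\le n}$, and until it exits this region the projected walk $(\tilde Z_t)_t$ agrees with the transient $\mu$-walk on $\freegroup$; hence the walk reaches distance $\geq n$ almost surely, so $T < \infty$ a.s.\ and $W$ is well defined with $|\tilde Z_T| \geq n$. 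Applying the strong Markov property at the stopping time $T$, together with the $(k,n,\alpha)$-fixing hypothesis at the random vertex $\tilde Z_T$ (whose distance to $o$ is at least $n$), I get that with conditional probability at least $\alpha$ the $k$-prefix never changes after time $T$. On that event the sequence $\pref_k(\tilde Z_t)$ stabilizes at $\pref_k(\tilde Z_T)$, so $W' = W$. Writing $D := \mathbbm{1}[W = W']$, this gives $\Pr[D = 0] \leq 1-\alpha$.

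The information-theoretic core is then short. Because $\Lambda$ is $n$-tree-like and $k < n$, the sphere of radius $k$ in $\Lambda$ is the $4$-regular tree sphere, of size $4\cdot 3^{k-1}$ (the root $o$ being a harmless extra value for $W'$), so both $W$ and $W'$ take at most $4^k$ values and any entropy of $W$ or $W'$ is at most $k\log 4$. The chain rule gives the exact identity
\[
H(Z_1 \mid W) - H(Z_1 \mid W') = I(Z_1 ; W' \mid W) - I(Z_1 ; W \mid W'),
\]
and since each mutual information is nonnegative and bounded by the corresponding conditional entropy, the left side is at most $H(W' \mid W) + H(W \mid W')$ in absolute value. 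To bound $H(W'\mid W)$ I condition on $D$: as $W'$ is determined by $W$ on $\{D=1\}$,
\[
H(W' \mid W) \leq H(D) + \Pr[D = 0]\, H(W' \mid W, D = 0) \leq H(D) + (1-\alpha)\, k \log 4,
\]
and symmetrically for $H(W \mid W')$. Adding the two estimates and setting $\eps(\alpha) := 2 H(D)$ (which, since $\Pr[D=0]\le 1-\alpha$, is at most $-2(1-\alpha)\log(1-\alpha)-2\alpha\log\alpha \to 0$ as $\alpha\to 1$) yields exactly the claimed bound $\eps(\alpha) + 2(\log 4)(1-\alpha)k$.

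I expect the genuinely delicate step to be the first one: transferring the fixing estimate, which is stated only for deterministic starting vertices of distance $\geq n$, to the random time $T$ and random location $\tilde Z_T$. This needs the strong Markov property plus the observation that the fixing bound holds \emph{uniformly} over all such starting vertices, so that it persists after integrating against the law of $\tilde Z_T$; one must also verify $T<\infty$ a.s.\ and, crucially, that $\pref_k(\tilde Z_\infty)$ really equals $\pref_k(\tilde Z_T)$ on the fixing event rather than merely stabilizing to some unrelated vertex. Once these measure-theoretic points are settled, the remaining manipulations are routine, the only quantitative inputs being the alphabet bound $4^k$ (from $n$-tree-likeness) and the coupling estimate $\Pr[D=0]\le 1-\alpha$ (from fixing).
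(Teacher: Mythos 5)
Your proof is correct and follows essentially the same route as the paper's: both bound the difference by $H(\pref_k(\tilde Z_T) \mid \pref_k(\tilde Z_\infty)) + H(\pref_k(\tilde Z_\infty) \mid \pref_k(\tilde Z_T))$, apply a Fano-type bound with alphabet size $4\cdot 3^{k-1} < 4^k$, and establish $\Pr[\pref_k(\tilde Z_T) = \pref_k(\tilde Z_\infty)] \geq \alpha$ via the strong Markov property at $T$ combined with the uniform fixing estimate. The only cosmetic differences are that you derive the entropy-difference inequality through the mutual-information identity and prove the Fano step by hand with the indicator $D$, where the paper quotes the general inequality and cites Fano directly.
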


\begin{proof}
First note that a general entropy inequality is
\begin{align*}
H(X \ | \ A) - H(X \ | \ B) %= H(X,A) - H(X,B) - H(A) + H(B) 
\leq H(A \ | \ B) + H(B \ | \ A) ,
\end{align*}
which for us provides the inequality
\begin{align*}
\big| H( Z_1 \ | \ \pref_k(\tilde Z_{T}) ) & - H(Z_1 \ | \ \pref_k(\tilde Z_\infty) )    \big| 
\\
& \leq  H(\pref_k(\tilde Z_{T})  \ | \ \pref_k(\tilde Z_\infty) ) + 
H(\pref_k(\tilde Z_{\infty})  \ | \ \pref_k(\tilde Z_T) ) .
\end{align*}
We now bound this last quantity using Fano's inequality (see {\em e.g.}\ 
\cite[Section 2.10]{cover2012elements}).
Taking $\tilde \alpha = \Pr [ \pref_k(\tilde Z_{T}) = \pref_k(\tilde Z_\infty) ]$, 
since the support of both 
$\pref_k(\tilde Z_{\infty}) , \pref_k(\tilde Z_T)$ is of size $4\cdot 3^{k-1} < 4^k$,
\begin{align*}
H(\pref_k(\tilde Z_{\infty})   \ | \ \pref_k(\tilde Z_T) ) & \leq H(\tilde \alpha , 1- \tilde \alpha) +
(1-\tilde \alpha) \log 4^k ,
\\
H(\pref_k(\tilde Z_{T})   \ | \ \pref_k(\tilde Z_\infty) ) & \leq H(\tilde \alpha , 1- \tilde \alpha) +
(1-\tilde \alpha) \log 4^k .
\end{align*}
where here we use the usual notation $H(\tilde \alpha , 1- \tilde \alpha)$ to denote the Shannon entropy of the probability vector $(\tilde \alpha , 1- \tilde \alpha)$.

Since $p \mapsto H(p,  1-p)$ decreases for $p \geq \tfrac12$, it suffices to prove that 
$\tilde \alpha = \Pr [ \pref_k(\tilde Z_{T}) = \pref_k(\tilde Z_\infty) ] \geq \alpha$.

But now, the very definition of $(k,n,\alpha)$-fixing implies that for any relevant $u$,
\begin{align*}
\Pr [ \pref_k & ( \tilde Z_\infty) = u \ | \ \pref_k (Z_T) = u ]  \\
& \geq \Pr [ \forall \ t > 0 \  , \ \pref_k(\tilde Z_{T+t}) \in \shd( \pref_k(u) ) \ | \ \pref_k(Z_T) = u ] \geq \alpha , 
\end{align*}
where we have used the strong Markov property at the stopping time $T$. 
Averaging over the relevant $u$ implies that $\tilde \alpha \geq \alpha$.
%$$ \tilde \alpha = \Pr [ \pref_k  ( \tilde Z_\infty) = \pref_k (Z_T) ] \geq \alpha . $$
\end{proof}

The following is the main technical estimate of this subsection.
\begin{lemma}
\label{lem:growing fixing}
Fix a generating probability measure $\mu$ on $\freegroup$ with finite support.
Let $(\Gamma_j)_j$ be a sequence of Schreier graphs 
such that $\Gamma_j$ is $(k_j, n_j,\alpha_j)$-fixing.
Let $K_j$ be the subgroup corresponding to $\Gamma_j$.

Suppose that $k_j \to \infty$, $n_j-k_j \to \infty$ and $(1-\alpha_j) k_j \to 0$ as $j \to \infty$.
Then, 
$$ \limsup_{j \to \infty} h_{RW} (\freegroup / \core_{\freegroup} (K_j) , \bar \mu ) = h_{RW}(\freegroup , \mu) . $$
\end{lemma}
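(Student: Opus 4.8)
The inequality $h_{RW}(\freegroup/N_j,\bar\mu)\le h_{RW}(\freegroup,\mu)$ holds for every $j$ (with $N_j:=\core_{\freegroup}(K_j)$), since the $\bar\mu$-walk on $\freegroup/N_j$ is a factor of the $\mu$-walk on $\freegroup$; hence $\limsup_j \le h_{RW}(\freegroup,\mu)$ for free, and the whole content is the matching lower bound. The plan is to exhibit, for each $j$, a boundary-measurable statistic of the quotient walk whose mutual information with the first step is at least $h_{RW}(\freegroup,\mu)-o(1)$. Throughout I use the entropy formula $h_{RW}(\freegroup,\mu)=H(Z_1)-H(Z_1\mid Z_\infty)$, where $Z_\infty$ is the boundary point (infinite reduced word) of the transient $\mu$-walk $(Z_t)_t$, together with the fact that on $\freegroup$ the boundary $\sigma$-algebra is generated by the prefixes, so that $H(Z_1\mid \pref_k(Z_\infty))\to H(Z_1\mid Z_\infty)$ as $k\to\infty$ by martingale convergence (recall $H(Z_1)$ is finite with fixed finite support).

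First I would set up three coupled walks driven by the same increments: $Z_t$ on $\freegroup$, $\tilde Z_t=K_jZ_t$ on the Schreier graph $\Gamma_j$ of $K_j$, and $\bar Z_t=N_jZ_t$ on the Cayley graph $\Lambda_j$ of $\freegroup/N_j$ (the Schreier graph of $N_j$). Since $\pref_{k_j}(\bar Z_\infty)$ is defined by almost sure stabilization, it is measurable with respect to the Poisson boundary of the quotient walk, so the entropy formula for the quotient gives $h_{RW}(\freegroup/N_j,\bar\mu)\ge H(\bar Z_1)-H(\bar Z_1\mid \pref_{k_j}(\bar Z_\infty))=I(Z_1;\pref_{k_j}(\bar Z_\infty))$, where I identify $\bar Z_1$ with $Z_1$ (the projection is injective on $\mathrm{supp}(\mu)$ once $n_j$ exceeds the maximal jump). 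The point of routing through $\Gamma_j$ is that the fixing hypothesis is stated there and not on the cover $\Lambda_j$. As $N_j\le K_j$, Lemma~\ref{lem:covering} makes $\Lambda_j$ a cover of $\Gamma_j$ which is again $n_j$-tree-like, and the covering map is a level-preserving bijection on the ball of radius $n_j$. Since $k_j<n_j$, the level-$k_j$ prefix-at-infinity upstairs and downstairs determine one another through this bijection (stabilization of the level-$k_j$ ancestor upstairs is equivalent to that downstairs), whence $I(Z_1;\pref_{k_j}(\bar Z_\infty))=I(Z_1;\pref_{k_j}(\tilde Z_\infty))$ and I may work entirely on the fixing graph $\Gamma_j$.

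Next I would pass from the boundary prefix to a finite-time prefix. Applying Lemma~\ref{lem:close entropy} to $\Gamma_j$ with the stopping time $T_j=\inf\{t:|\tilde Z_t|\ge n_j\}$ gives $|H(Z_1\mid \pref_{k_j}(\tilde Z_{T_j}))-H(Z_1\mid \pref_{k_j}(\tilde Z_\infty))|\le \eps(\alpha_j)+2(\log 4)(1-\alpha_j)k_j$, which tends to $0$ precisely because $\alpha_j\to 1$ and $(1-\alpha_j)k_j\to 0$ by hypothesis. Up to time $T_j$ the projected walk stays inside the tree region, where $\Gamma_j$, $\Lambda_j$ and $\freegroup$ are canonically isomorphic; under the coupling this identifies $\pref_{k_j}(\tilde Z_{T_j})$ with $\pref_{k_j}(Z_{T_j})$ computed on $\freegroup$ (transience guarantees $T_j<\infty$ almost surely), so $H(Z_1\mid \pref_{k_j}(\tilde Z_{T_j}))=H(Z_1\mid \pref_{k_j}(Z_{T_j}))$.

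It then remains to compare, on $\freegroup$ itself, the finite-time prefix with the boundary prefix. Writing $E_j=\{\pref_{k_j}(Z_{T_j})=\pref_{k_j}(Z_\infty)\}$ and $\tilde\alpha_j=\Pr[E_j]$, I would condition the first step on the indicator $\mathbf 1_{E_j}$ rather than run Fano on the prefix: as $Z_1$ has fixed finite support of size $m:=|\mathrm{supp}(\mu)|$, this yields $H(Z_1\mid \pref_{k_j}(Z_{T_j}))\le H(Z_1\mid \pref_{k_j}(Z_\infty))+H(\tilde\alpha_j,1-\tilde\alpha_j)+(1-\tilde\alpha_j)\log m$, an error with no factor of $k_j$. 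By transience and $n_j-k_j\to\infty$, the $k_j$-prefix can change only if the walk returns from level $\ge n_j$ down to level $\le k_j+r$, an event of probability at most $C\rho^{\,n_j-k_j}\to 0$ for some $\rho<1$, so $\tilde\alpha_j\to 1$ and the error vanishes. Combining the three comparisons with $H(Z_1\mid \pref_{k_j}(Z_\infty))\to H(Z_1\mid Z_\infty)$ gives $H(Z_1\mid \pref_{k_j}(\tilde Z_\infty))\le H(Z_1\mid Z_\infty)+o(1)$, hence $h_{RW}(\freegroup/N_j,\bar\mu)\ge H(Z_1)-H(Z_1\mid Z_\infty)-o(1)=h_{RW}(\freegroup,\mu)-o(1)$, which is the desired lower bound. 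The main obstacle, and the reason the two fixing estimates must be used asymmetrically, is that the fixing hypothesis lives on $\Gamma_j$ while the entropy we must bound lives on the cover $\Lambda_j$; the crux is the prefix-at-infinity information-equivalence between base and cover (so that $\Gamma_j$-fixing, with its controlled $(1-\alpha_j)k_j$ error, may be invoked), together with the observation that on the free-group side one must condition the bounded-alphabet variable $Z_1$ rather than the prefix, since naively transporting a Fano bound there would reintroduce an uncontrolled factor $k_j$.
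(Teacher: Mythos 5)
Your proposal is correct in substance and follows the same architecture as the paper's proof: the Kaimanovich--Vershik formula, the prefix-at-infinity statistic, Lemma~\ref{lem:close entropy} at the stopping time $T_j$, the tree-region identification of $\pref_{k_j}(\tilde Z_{T_j})$ with $\pref_{k_j}(Z_{T_j})$, and martingale convergence $H(Z_1\mid\pref_{k_j}(Z_\infty))\to H(Z_1\mid\Tail)$. But you deviate in two local steps, and both deviations are genuine improvements. First, you apply Lemma~\ref{lem:close entropy} only on the base graph $\Gamma_j$, where the fixing hypothesis actually lives; the paper applies it to the cover $\Lambda_j$ of $\core_{\freegroup}(K_j)$, implicitly using that the fixing property lifts along the covering map, which is never proved (covering maps only contract the distance to the root, so the level-$\geq n_j$ hypothesis upstairs does not project to the level-$\geq n_j$ hypothesis downstairs). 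Second, on the free-group side you replace the paper's second invocation of Lemma~\ref{lem:close entropy} by conditioning the bounded-alphabet variable $Z_1$ on the indicator of $E_j=\{\pref_{k_j}(Z_{T_j})=\pref_{k_j}(Z_\infty)\}$, with error $H(\tilde\alpha_j,1-\tilde\alpha_j)+(1-\tilde\alpha_j)\log m$. This is a real repair: the tree is only known to be $(k,k+1,\alpha(\mu))$-fixing with $\alpha(\mu)$ \emph{fixed} (Lemma~\ref{lem:free group fixing}), so a Fano bound on the tree at scale $(k_j,n_j)$ carries an error of order $(1-\alpha_{\mathrm{tree}})k_j$, and the hypotheses $k_j\to\infty$, $n_j-k_j\to\infty$ do not by themselves force $k_j\rho^{\,n_j-k_j}\to 0$; your bound has no factor of $k_j$ and needs only $\Pr[E_j^c]\to 0$, which transience does give from $n_j-k_j\to\infty$ alone.

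One step of yours should be restated, though the fix is immediate. You claim that the prefixes-at-infinity upstairs and downstairs ``determine one another'' through the ball bijection. The equivalence of stabilization in the two graphs is not clear: the covering map can carry vertices far from the root in $\Lambda_j$ to vertices near the root in $\Gamma_j$, so stabilization upstairs need not force stabilization downstairs, and the shadow structures need not correspond outside the tree ball. Fortunately you only need one direction, and that direction is trivial: since $N_j\leq K_j$, the map $N_j g\mapsto K_j g$ is well defined, so the walk $(K_jZ_t)_t$ is a deterministic function of $(N_jZ_t)_t$; hence $\sigma(K_jZ_t,K_jZ_{t+1},\dots)\subseteq\sigma(N_jZ_t,N_jZ_{t+1},\dots)$ for every $t$, so $\pref_{k_j}(\tilde Z_\infty)$ is measurable with respect to the tail $\sigma$-algebra of the quotient walk on $\freegroup/N_j$, and
\begin{equation*}
h_{RW}(\freegroup/N_j,\bar\mu)\;=\;H(Z_1)-H\bigl(Z_1\mid\Tail_{N_j}\bigr)\;\geq\;H(Z_1)-H\bigl(Z_1\mid\pref_{k_j}(\tilde Z_\infty)\bigr),
\end{equation*}
which is exactly the lower bound your argument consumes. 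Replacing the claimed two-way equivalence by this one-line measurability observation makes your proof complete, and in fact slightly cleaner than the paper's.
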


\begin{proof}
Let $(Z_t)_t$ be a $\mu$-random walk on $\freegroup$.

Take $j$ large enough so that $n_j-k_j$ is much larger that $r=\max \{ |g| \ : \ \mu(g)>0 \}$.
To simplify the presentation we use
$\Gamma_j = \Gamma, K = K_j, C =C_j =  \core_{\freegroup} (K_j)$ 
and $k=k_j,n= n_j, \alpha= \alpha_j$, omitting the subscript $j$ when it is clear from the context.
Let $\Lambda = \Lambda_j$ be the Schreier graph corresponding to $C = C_j$.

Let $\Tail_C = \bigcap_t \sigma(C Z_t , C Z_{t+1} , \ldots )$ 
and $\Tail = \bigcap_t \sigma(Z_t , Z_{t+1} , \ldots )$
denote that tail $\sigma$-algebras of the random walk on $\Lambda$ and on $\freegroup$ respectively.
Kaimanovich and Vershik (eq.\ 13 on p.\ 465 in \cite{kaimanovich1983random}) show that 
$$ h_{RW} (\freegroup / C , \bar \mu ) = H(C Z_1) - H(C Z_1 \ | \ \Tail_C ) $$
and 
$$ \rwe = H(Z_1) - H(Z_1 \ | \ \Tail ) . $$
Now, since $C \leq K$, and since $\Gamma$ is $n$-tree-like, also $\Lambda$ is (Lemma \ref{lem:covering}).
Since $\Lambda$ is $n$-tree-like, 
$C Z_1$ and $Z_1$ determine one another (Lemma \ref{lem:prefix tree like}), 
implying $H(C Z_1) = H(Z_1)$ 
and $H( CZ_1 \ | \ \Tail_C) = H(Z_1 \ | \ \Tail_C)$.
Since $\Tail_C \subset \Tail$, the inequality $H(Z_1 \ | \ \Tail_C) \geq H(Z_1 \ | \ \Tail )$ is immediate.
Hence, we only need to show that $\liminf_{j} H(Z_1 \ | \ \Tail_{C_j} ) \le H(Z_1 \ | \ \Tail)$.
 
Set $T = \inf \{ t \ : \ |C Z_t | \geq n \}$ as in Lemma \ref{lem:close entropy}.
Since $\Lambda$ is $n$-tree-like, 
$\pref_k(C Z_{T-1})$ and $\pref_k(Z_{T-1})$ determine one another (Lemma \ref{lem:prefix tree like}).
Also, since $n-k$ is much larger than $r$,
and specifically much larger than any jump the walk can make in one step, we know that
$\pref_k( C Z_T) = \pref_k (C Z_{T-1})$ and $\pref_k(Z_T) = \pref_k(Z_{T-1})$.
All in all, $H(Z_1 \ | \ \pref_k( C Z_{T} ) ) = H(Z_1 \ | \ \pref_k( Z_{T}) )$.

Now, set $\eps = \eps(\alpha,k) = 2 H(\alpha,1-\alpha) + 2 (\log 4) (1-\alpha) k$.
Since $\pref_k( CZ_\infty )$ is measurable with respect to $\Tail_C$,
by using Lemma \ref{lem:close entropy} twice, once for $\Lambda$ and once
for the tree, we get the bound 
\begin{align*}
H(Z_1 \ | \ \Tail_C ) & \leq H(Z_1 \ | \ \pref_k(C Z_\infty) ) \leq H(Z_1 \ | \ \pref_k( C Z_{T} ) ) + \eps
\\
& = H(Z_1 \ | \ \pref_k(Z_{T} ) ) + \eps \leq H(Z_1 \ | \ \pref_k(Z_\infty) ) + 2 \eps .
\end{align*}

Now, under the assumptions of the lemma, $\eps \to 0$ as $j \to \infty$.
Also, since $k_j \to \infty$ we have that 
$$ H(Z_1 \ | \ \pref_{k_j}(Z_\infty) ) \to H(Z_1 \ | \ \Tail ) . $$
Thus, taking a limit on the above provides us with
$$ \limsup_{j \to \infty} H(Z_1 \ | \ \Tail_{C_j} ) \leq 
\limsup_{j \to \infty} H(Z_1 \ | \ \pref_{k_j}(Z_\infty) ) = H(Z_1 \ | \ \Tail) . $$

Hence, as mentioned, we conclude that $\limsup_{j} H(Z_1 \ | \ \Tail_{C_j} ) = H(Z_1 \ | \ \Tail)$, and since $H(Z_1)=H(CZ_1)$ we get that 
$$ \limsup_{j\to\infty} h_{RW} (\freegroup / \core_{\freegroup} (K_j) , \bar \mu ) = h_{RW}(\freegroup , \mu) . $$

\end{proof}

To complete the relevance of fixing to convergence of the random walk entropies, 
we conclude with the following.

\begin{prop}
\label{prop:growing fixing general}
Fix a generating probability measure $\mu$ on $\freegroup$ with finite support.
Let $(\Gamma_j)_j$ be a sequence of Schreier graphs 
such that $\Gamma_j$ is $(k_j, n_j ,\alpha_j)$-fixing.
Let $K_j$ be the subgroup corresponding to $\Gamma_j$.

Suppose that $k_j \to \infty$ and $\alpha_j \to 1$ as $j \to \infty$.
Then, 
$$ \limsup_{j \to \infty} h_{RW} (\freegroup / \core_{\freegroup} (K_j) , \bar \mu ) = h_{RW}(\freegroup , \mu) . $$ 
\end{prop}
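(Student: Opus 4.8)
The plan is to reduce Proposition \ref{prop:growing fixing general} to the more technical Lemma \ref{lem:growing fixing} by a reparametrization argument. The only discrepancy between the two statements lies in the hypotheses: Lemma \ref{lem:growing fixing} asks for the coupled condition $(1-\alpha_j)k_j \to 0$ together with $n_j - k_j \to \infty$, whereas here we are handed only the decoupled conditions $k_j \to \infty$ and $\alpha_j \to 1$; the conclusions are word-for-word identical. Since the subgroups $K_j$ (and hence the quantities $h_{RW}(\freegroup/\core_\freegroup(K_j),\bar\mu)$) are fixed, it suffices to \emph{shrink} each $k_j$ to a smaller integer $k_j'$ for which the stronger hypotheses of the Lemma hold, while preserving the fixing property of $\Gamma_j$.

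The crux is a monotonicity observation: if $\Lambda$ is $(k,n,\alpha)$-fixing and $1 \leq k' \leq k$, then $\Lambda$ is also $(k',n,\alpha)$-fixing. The point is that in an $n$-tree-like graph the coarser prefix factors through the finer one, $\pref_{k'}(v) = \pref_{k'}(\pref_k(v))$, and correspondingly $\shd(u) \subseteq \shd(u')$ whenever $u = \pref_k(v)$ and $u' = \pref_{k'}(v) = \pref_{k'}(u)$. Consequently, for a walk started at $|v| \geq n$, the event $\{\forall t : \pref_k(\tilde Z_t) = \pref_k(\tilde Z_0)\}$ is contained in the event $\{\forall t : \pref_{k'}(\tilde Z_t) = \pref_{k'}(\tilde Z_0)\}$: if $\tilde Z_t \in \shd(u)$ then a fortiori $\tilde Z_t \in \shd(u')$, so its $k'$-prefix is $u'$. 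Hence the latter event has probability at least $\alpha$ as well. Informally, fixing a coarser prefix is easier, so one may always decrease $k$ without touching $n$ or $\alpha$.

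With monotonicity established, I would set $\beta_j = 1-\alpha_j$ and choose
\[
 k_j' = \big\lfloor \min\{\, \sqrt{k_j}\,,\, \beta_j^{-1/2} \,\} \big\rfloor .
\]
Since $k_j' \leq \sqrt{k_j} \leq k_j$, the monotonicity lemma shows $\Gamma_j$ is $(k_j', n_j, \alpha_j)$-fixing. It then remains to verify the three hypotheses of Lemma \ref{lem:growing fixing} for the reparametrized sequence $(k_j', n_j, \alpha_j)$. First, $k_j' \to \infty$, because both $\sqrt{k_j} \to \infty$ and $\beta_j^{-1/2} \to \infty$ (as $\beta_j \to 0$), so their minimum diverges. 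Second, $n_j - k_j' \to \infty$, using $n_j \geq k_j + 1$ and $k_j' \leq \sqrt{k_j}$ to get $n_j - k_j' \geq k_j + 1 - \sqrt{k_j} \to \infty$. Third, $(1-\alpha_j)k_j' = \beta_j k_j' \leq \beta_j \cdot \beta_j^{-1/2} = \beta_j^{1/2} \to 0$. Applying Lemma \ref{lem:growing fixing} to this sequence gives $\limsup_{j\to\infty} h_{RW}(\freegroup/\core_\freegroup(K_j),\bar\mu) = h_{RW}(\freegroup,\mu)$, which is precisely the assertion.

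The genuinely delicate step is the monotonicity lemma, where one must check that the prefix of a vertex at distance $\geq n$ is dictated purely by the tree structure near the root (so that the shadow inclusion $\shd(u)\subseteq\shd(u')$ indeed forces the $k'$-prefix to be constant); once this is in place the parameter bookkeeping in the final paragraph is entirely routine.
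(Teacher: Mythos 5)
Your proof is correct and follows essentially the same route as the paper: the paper also reduces to Lemma \ref{lem:growing fixing} via the monotonicity of the fixing property (which it states without proof, in the stronger form that $(k,n,\alpha)$-fixing implies $(k',n',\alpha')$-fixing for $k'\leq k$, $n'\geq n$, $\alpha'\leq\alpha$), the only cosmetic difference being that the paper passes to a subsequence $(\Gamma_{j_m})_m$ with parameters $(m,\,n_{j_m},\,1-m^{-2})$, whereas you reparametrize the full sequence via $k_j'=\lfloor\min\{\sqrt{k_j},(1-\alpha_j)^{-1/2}\}\rfloor$. Your careful verification of the shadow inclusion $\shd(u)\subseteq\shd(u')$ behind the monotonicity is a nice touch, but the two arguments are the same reduction with different bookkeeping.
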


\begin{proof}
Since we are only interested in $\limsup$ we may pass to a subsequence and show that the parameters along this subsequence satisfy  
the assumptions of Lemma \ref{lem:growing fixing}. Notice that the assumption $k_j \to\infty$ implies in particular that $n_j\to\infty$.

Note that by definition, if $\Lambda$ is $(k,n,\alpha)$-fixing, then it is also 
$(k',n',\alpha')$-fixing for any $k' \leq k, n' \geq n, \alpha' \leq \alpha$.

For any $m$ choose $j_m$ large enough so that for all $i \geq j_m$ we have 
both $\alpha_i > 1- m^{-2}$ and $k_i > 2m$.
The subsequence of graphs $(\Gamma_{j_m} )_m$ satisfies that
$\Gamma_{j_m}$ is $(m,n_{j_m} ,1-\tfrac{1}{m^{2}})$-fixing.
Since $n_{j_m} - m \geq k_{j_m} - m > m$,
Lemma \ref{lem:growing fixing} is applicable.
\end{proof}

\subsection{Gluing graphs}
\label{sec:gluing-graphs}

Let $\Lambda$ be a Schreier graph of the free group $\freegroup=\left\langle a,b \right\rangle$ and let 
$S=\{a,a^{-1},b,b^{-1}\}$. We say that the pair 
$(\Lambda,\edge)$ is \define{$s$-marked} 
if $\edge$ is a (oriented) edge in $\Lambda$ labeled by $s\in S$.

For $s \in S$ we define $\mathcal{N}_s$ to be the following graph:  the vertices are non-negative the integers $\N$.
The edges are given by $(x+1,x)$, each labeled by $s$, 
and self-loops labeled by $\xi\in\{a,b\} \setminus \{s,s^{-1} \}$ at each vertex.
In order to be a Schreier graph,
this graph is missing one outgoing edge from $0$ labeled by $s$.

\begin{figure}[h]
\begin{center}
\includegraphics[width=0.5\textwidth]{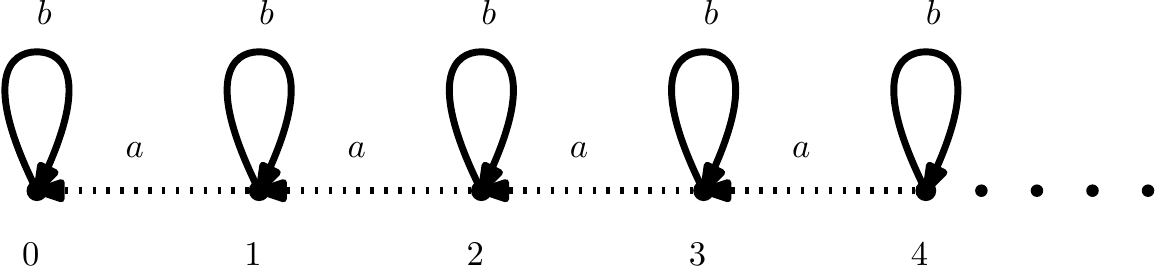}%{copies-of-n.png}
\caption{
Part of the labeled graph $\mathcal{N}_a$.}
\end{center}
\end{figure}

Given a $s$-marked pair $(\Lambda,\edge)$ and an integer $n > 0$, we construct a Schreier graph 
$\Gamma_n(\Lambda,\edge)$:

The Cayley graph of $\freegroup$ is the $4$-regular tree. 
%If $g \in \freegroup$ and $g = s_1 \cdots s_{|g|}$ is the corresponding word in the generators, 
%we call $s_{|g|}$ the last letter of $g$, and denote it by $x(g)$.
For an integer $n$ and a label (generator) $s$ recall the finite subtree
$(\freegroup)_{\leq n}$ which is the induced subgraph on
$\set{ g \in \freegroup \ : \ |g| \leq n }$, and let $(\freegroup)_n$ be the set of vertices $\set{ g \ : \ |g| =n}$.
For $g \in (\freegroup)_n$ there is exactly one outgoing edge incident to $g$ in $(\freegroup)_{\leq n}$.
If this edge is labeled by $\xi \in S$ we say that $g$ is a \define{$\xi$-leaf}.

\begin{figure}[h]
\begin{center}
\includegraphics[width=0.5\textwidth]{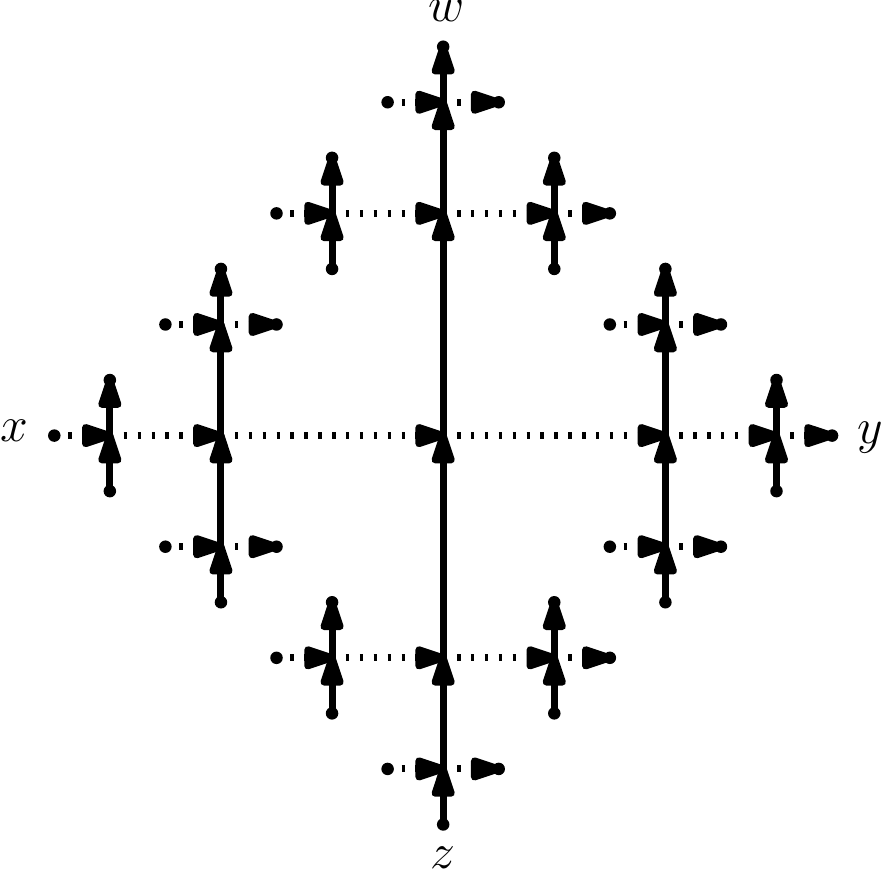}
\caption{
The subgraph $(\freegroup)_{\leq 3}$.  
Edges labeled $a$ are dotted and labeled $b$ are solid.
The vertices $x,y,z,w$ are $a,a^{-1},b,b^{-1}$-leaves respectively.
}
\end{center}
\end{figure}

For a $\xi$-leaf $g$, in order to complete $(\freegroup)_{\leq n}$ into a Schreier
graph, we need to specify $3$ more outgoing edges, with the labels $\xi^{-1}$ and 
the two labels from $S \setminus \{ \xi , \xi^{-1} \}$.

Now, recall our $s$-marked pair $(\Lambda,\edge)$.  
Let $g$ be a $\xi$-leaf for $\xi \not\in \{s, s^{-1} \}$.
Let $(\Lambda_g,\edge_g)$ be a copy of $(\Lambda,\edge)$, and suppose that $\edge_g = (x_g,y_g)$.
Connect $\Lambda_g$ to $g$ by deleting the edge $\edge_g$ from $\Lambda_g$
and adding the directed edges $(x_g,g) , (g,y_g)$ labeling them both $s$.
Also, let $\mathcal{N}_g$ be a copy of $\mathcal{N}_{\xi}$ above, 
and connect this copy to $g$ by a directed edge $(0_g,g)$ labeled by $\xi$
(here $0_g$ is the copy of $0$ in $\mathcal{N}_g$).
This takes care of $\xi$-leaves for $\xi \not\in \{s,s^{-1} \}$.

If $\xi \in \{ s,s^{-1} \}$, the construction is simpler:
For any $\xi$-leaf $\gamma$ (where $\xi \in \{ s,s^{-1} \}$),
let $\mathcal{N}_{\gamma}$ be a copy of $\mathcal{N}_\xi$ (with $0_{\gamma}$ the copy of $0$ as before), 
and connect $\mathcal{N}_{\gamma}$ by an edge $(0_{\gamma},\gamma)$ labeled by $\xi$. 
Finally add an additional self-loop with the missing labels at $\gamma$ ({\em i.e.}\ the labels in 
$S \setminus \{s,s^{-1} \}$ each in one direction along the loop).

By adding all the above copies to all the leaves in $(\freegroup)_{\leq n}$, we obtain a Schreier graph, 
which we denote $\Gamma_n(\Lambda,\edge)$.
See Figure \ref{fig:glue} for a visualization 
(which in this case is probably more revealing than the written description).
It is immediate from the construction that $\Gamma_n(\Lambda,\edge)$ is $n$-tree-like.

\begin{figure}[h]

\begin{center}
\includegraphics[width=\textwidth]{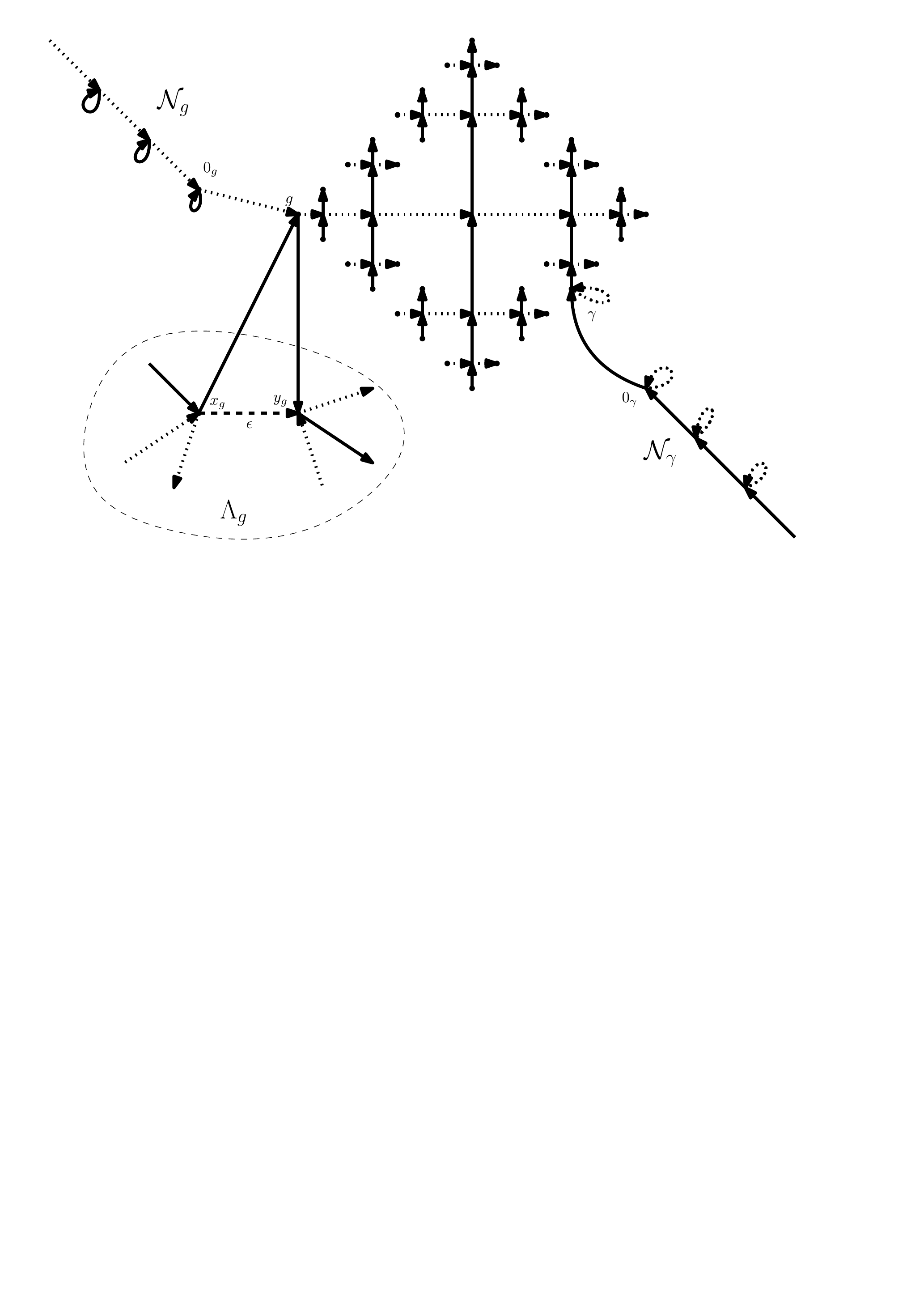}
\caption{A visiulization of gluing $(\Lambda,\edge)$ to $(\freegroup)_{\leq 3}$.
The dotted edges correspond to the label $a$, and the full edges to $b$.
The vertex $g$ is a $a$-leaf.  Thus, a copy of $\Lambda$, denoted $\Lambda_g$ is connected
by removing the edge $\edge$ (dashed) and adding to edges $(x_g,g) , (g,y_g)$ labeled $b$ (solid).
A copy of $\mathcal{N}_{a}$, denoted $\mathcal{N}_g$, is connected via an edge 
$(0_g,g)$ labeled $a$ (dotted). 
Also, to the $b$-leaf $\gamma$, a copy of $\mathcal{N}_b$, denoted $\mathcal{N}_\gamma$ is added
via an edge $(0_\gamma,\gamma)$ labeled $b$ (full), and an additional self-loop labeled $a$ (dotted) is added at 
$\gamma$.
\label{fig:glue}
}
\end{center}

\end{figure}

\subsection{Marked pairs and transience}
\label{scn:transience}

Recall the definition of a {\em transient} random walk (see {\em e.g.}\ \cite{Gabor, LyonsPeres}):
We say that a graph $\Lambda$ is \define{$\mu$-transient}
if for any vertex $v$ in $\Lambda$ there is some positive probability 
that the $\mu$-random walk started at $v$ will never return to $v$.

We now connect the transience of $\Lambda$ above 
to that of {\em fixing} from the previous subsection.

\begin{prop}  
\label{prop:trans and fix}
Let $\mu$ be a finitely supported generating probability measure on $\freegroup$.
Let $(\Lambda,\edge)$ be a $\mu$-transient $s$-marked pair.

Then, for any $\eps>0$ and $k$, there exist $n > k$ such that 
the graph $\Gamma_{n}(\Lambda,\edge)$ is $(k,n,1-\eps)$-fixing.
\end{prop}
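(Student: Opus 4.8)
The plan is to verify the two requirements of $(k,n,1-\eps)$-fixing separately. That $\Gamma_n(\Lambda,\edge)$ is $n$-tree-like is immediate from the construction and was noted above, so only the probabilistic condition needs work, and I would reduce it to a single hitting estimate. Fix $v$ with $|v|\ge n$ and write $u=\pref_k(v)$, a vertex at level $k$, and $g=\pref_n(v)$, the leaf at level $n$ above the glued piece containing $v$. Set $r=\max\{|h|:\mu(h)>0\}$. Since $\Gamma_n$ is $n$-tree-like and $k<n$, the vertex $u$ is a cut vertex separating $\shd(u)$ from the rest of the graph, and each glued piece meets the tree only at its leaf; hence to change its $k$-prefix the projected walk $(\tilde Z_t)$ must exit $\shd(u)$, and because a single $\mu$-step moves along a path of length at most $r$, it can do so only from a vertex within distance $r$ of $u$, i.e.\ from a recorded position at level $\le k+r$. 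As reaching level $\le k+r<n$ from inside the piece below $g$ must pass through $g$, the strong Markov property gives
\[
\Pr[\exists t:\ \pref_k(\tilde Z_t)\ne \pref_k(v)\mid \tilde Z_0=v]\ \le\ \Pr_g[\text{the walk ever reaches level }\le k+r].
\]
It therefore suffices to bound the right-hand side, uniformly over leaves $g$, by $\eps$.

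Set $m=k+r$ and let $g=v_n,v_{n-1},\dots,v_m$ be the chain of ancestors, $v_j=\pref_j(g)$. Each $v_j$ with $j<n$ is a cut vertex, so to reach level $\le m$ the walk must visit $v_{n-1},v_{n-2},\dots,v_m$ in this order, and the strong Markov property yields
\[
\Pr_g[\text{reach level }\le m]\ \le\ \prod_{j=m}^{\,n-2}\Pr_{v_{j+1}}[\text{reach }\mathrm{parent}(v_{j+1})].
\]
The heart of the matter is the following uniform estimate: there is $c=c(\mu,\Lambda)\in(0,1)$ such that for every interior tree vertex $w$ (that is, $m\le |w|\le n-1$) one has $\Pr_w[\text{reach }\mathrm{parent}(w)]\le 1-c$. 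Granting this, the product is at most $(1-c)^{\,n-m-1}$, which tends to $0$ as $n\to\infty$, so choosing $n$ with $(1-c)^{\,n-k-r-1}<\eps$ finishes the proof.

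The main obstacle is this uniform escape estimate, and it is exactly where the hypothesis that $(\Lambda,\edge)$ is $\mu$-transient enters. Qualitatively it asserts that from any interior vertex $w$ the walk has a chance bounded away from $0$ of escaping to infinity \emph{downward}, inside $\shd(w)$, and therefore never climbing back through the cut vertex $\mathrm{parent}(w)$. To produce such an escape I would exhibit an explicit route: the shadow $\shd(w)$ always contains a glued copy of $\Lambda$ — the subtree below $w$ reaches leaves of every label type, and those not of type $s^{\pm1}$ carry a copy of $\Lambda$ — and I would lower bound the probability that the walk descends into $\shd(w)$, reaches such a copy without first returning to $\mathrm{parent}(w)$, and then escapes to infinity inside it. The two ingredients are transience of the $\mu$-walk on $\freegroup$ (its drift away from the root makes descending into $\shd(w)$ without returning up uniformly likely — the same mechanism underlying Lemma \ref{lem:free group fixing}) and $\mu$-transience of $\Lambda$ (positive escape probability once inside a copy). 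The uniformity in $w$ and $n$ should come from the self-similar nature of the construction — $\shd(w)$ is again a depth-$(n-|w|)$ instance of the same gluing — together with the vertex-transitivity of $\freegroup$. Finally, that the auxiliary graphs $\mathcal{N}_\xi$ may be recurrent for some $\mu$ is harmless: a recurrent $\mathcal{N}_\xi$-piece merely returns the walk to the tree rather than trapping it, so all escape to infinity is carried by the $\Lambda$-copies, which is precisely what transience of $(\Lambda,\edge)$ provides.
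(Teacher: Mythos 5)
Your overall strategy is the same one the paper uses, and your three probabilistic ingredients are exactly the paper's Step I: a uniform tree-fixing probability from Lemma \ref{lem:free group fixing}, a uniform probability of entering a glued copy of $\Lambda$ from near the leaves (your observation that every interior vertex has a non-$s^{\pm1}$ leaf, hence a copy of $\Lambda$, in its shadow is correct and is used in the paper), and the escape probability $\beta$ inside a copy coming from $\mu$-transience of $(\Lambda,\edge)$. The $n$-tree-like part is indeed immediate. But the skeleton you hang these estimates on has a genuine gap: both your initial reduction to $\Pr_g[\text{ever reach level} \le k+r]$ and your telescoping product over the ancestor chain $v_{n-1},\dots,v_m$ assume the walk \emph{visits} the cut vertices $g$ and $v_j$ as recorded positions, so that the strong Markov property can be applied there. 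This is true only for nearest-neighbor walks. For a general finitely supported $\mu$ with range $r>1$ --- precisely the generality of the proposition, and you even introduce $r$ and the $k+r$ buffer yourself --- a single step moves along a path of length up to $r$, so the walk can cross $g$ or any $v_j$ without ever occupying it; the factorization $\Pr_g[\text{reach level }\le m]\le\prod_j\Pr_{v_{j+1}}[\text{reach }\mathrm{parent}(v_{j+1})]$ is therefore unjustified. The same issue infects your per-vertex estimate: with jumps, avoiding $\mathrm{parent}(w)$ is neither forced when the walk climbs past level $|w|-1$, nor does it imply the walk stays in $\shd(w)$, so ``$\Pr_w[\text{reach }\mathrm{parent}(w)]\le 1-c$'' is not the right uniform quantity to iterate.

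The repair is the device the paper uses in its Step II. Instead of conditioning on exact visits to cut vertices, one works with the level-crossing times $U_{n-jr}=\inf\{t : |\tilde Z_t|<n-jr\}$ for levels spaced $r$ apart, taking $n=k+(\ell+2)r+1$. If the $k$-prefix ever changes, the walk must cross all of these levels, and deterministically at each crossing time the position satisfies $|\tilde Z_{U_{n-jr}}|\ge n-(j+1)r>k+r$, because one step drops the level by at most $r$. One then needs the uniform estimate at \emph{arbitrary} vertices (not just ancestors of the start): for every $|v|>k$, the probability that $\pref_k(\tilde Z_\infty)=\pref_k(v)$ --- an event phrased directly about the prefix at infinity, not about avoiding a parent --- is at least $\eta\delta^{r+1}\beta$, where $\eta$ is the tree-fixing constant, $\delta^{r+1}$ bounds the probability of entering some copy $\Lambda_j$ at its marked vertex $y_j$ before dropping below level $n-r$, and $\beta$ is the escape probability inside $\Lambda$. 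Applying the strong Markov property at the successive crossing times then gives failure probability at most $(1-\eta\delta^{r+1}\beta)^{\ell}<\eps$ for $\ell$ large. So your estimate-gathering is sound and matches the paper, but your proof as written only establishes the proposition for $\mu$ supported on $S$; closing it requires replacing the cut-vertex product with the crossing-time argument above.
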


The proof of this proposition is quite technical.
For the benefit of the reader, before proving the proposition we
first provide a rough sketch, outlining the main idea of the proof.

\subsubsection*{Proof Sketch}
Consider the graph $\Gamma_n(\Lambda,\edge)$.  This is essentially 
a finite tree with copies of $\Lambda$ and copies of $\mathbb{N}$ glued to the (appropriate) leaves.
At any vertex $v$ with $|v|>k$, there is a fixed positive probability $\alpha$ 
of escaping to the leaves without changing the $k$-prefix 
(because $\freegroup$ is $(k,k+1,\alpha)$-fixing).

Once near the leaves, with some fixed probability $\delta$ 
the walk reaches a copy of $\Lambda$ without
going back and changing the $k$-prefix. 
Note that although there are copies of the recurrent $\mathcal{N}_{\gamma}$ (e.g. for symmetric random walks) glued to some leaves, nonetheless, since $k$ is much smaller than $n$, there is at
least one copy of the transient $\Lambda$ in the shadow of $v$.

Finally, once in the copy of $\Lambda$, because of transience, 
the walk has some positive probability $\beta$ to escape to infinity without ever leaving $\Lambda$,
and thus without ever changing the $k$-prefix.
 
All together, with probability at least $\alpha \delta \beta$ the walk will never change the $k$-prefix.

If this event fails, the walk may jump back some distance, but not too much since we assume that $\mu$ 
has finite support.  So there is some $r$ for which the walk (deterministically) 
cannot retreat more than distance $r$,
even conditioned on the above event failing.

Thus, starting at $|v| > k + \ell r$ for large enough $\ell$, there are at least $\ell$ attempts to escape to infinity
without changing the $k$-prefix, each with a conditional probability of success at least $\alpha \delta \beta$.
By taking $\ell$ large enough, we can make the probability of changing the $k$-prefix as small as desired.

We now proceed with the actual proof.
\begin{proof}[Proof of Proposition \ref{prop:trans and fix}]
{\bf Step I.}
Let $(\tilde Z_t)_t$ denote the projected $\mu$-random walk on the Schreier graph $\Lambda$.
Assume that $\edge = (x,y)$, and note that since $\Lambda$ is $\mu$-transient, 
then 
$$ \Pr [ \forall \ t \ , \ \tilde Z_t \neq x \ | \ \tilde Z_0 = y ] + 
 \Pr [ \forall \ t \ , \ \tilde Z_t \neq y \ | \ \tilde Z_0 = x ] > 0 . $$
By possibly changing $\edge$ to $(y,x)$, without loss of generality we may assume that 
$$ \beta:=\Pr [ \forall \ t \ , \ \tilde Z_t \neq x \ | \ \tilde Z_0 = y ] > 0  . $$

Let $\ell >0$ be large enough, to be chosen below.
Set $n = k+ (\ell+2) r + 1$. %%%%%%%%%%%%%%%%%%%%
We now change notation and use 
$(\tilde Z_t)_t$ to denote the projected walk on the Schreier graph $\Gamma_{n}(\Lambda,\edge)$. 
Denote $r = \max \{ |g| \ : \  \mu(g) > 0 \}$, the maximal possible jump of the $\mu$-random walk.
A key fact we will use throughout is that:
in order to change the $k$-prefix, 
the walk must at some point reach a vertex $u$ with $|u| \leq k + r$.

Let $\tau = \inf \{ t \ : \ n-r \leq |\tilde Z_t| \leq n \}$.
If we start from $|\tilde Z_0 | \leq n$ this is a.s.\ finite, since the jumps cannot be larger than $r$.
Now, by Lemma \ref{lem:free group fixing},
$\freegroup$ is $(k,k+1,\eta)$-fixing for all $k$ and some $\eta = \eta(\mu)>0$.
Since $\Gamma_{n}(\Lambda,\edge)$ is $n$-tree-like,
if $v$ is a vertex in $\Gamma_{n}(\Lambda,\edge)$ with $k<  |v| < n$, then 
\begin{align}
\label{eqn:tree fixing}
\Pr [  \pref_k(\tilde Z_\tau) = \pref_k(v)  \ | \ \tilde Z_0 = v ] \geq \eta
\end{align}

For $s \in S$ let $t_s$ be the smallest number such that $\mu^{t_s}(s) > 0$. 
Let $t_S = \max \{ t_s \ : \ s \in S \}$.
Let $\delta = \min \{ \mu^{t_s}(s) \ : \ s \in S \}$.
Let $v,u$ be two adjacent vertices in $\Gamma_{n}(\Lambda,\edge)$,
and assume that the label of $(v,u)$ is $s \in S$.
Note that the definitions above ensure that
$$ \Pr [ \tilde Z_{t_s} = u \ | \  \tilde Z_0=v ] \geq \delta . $$ 
Thus, when $|v| > k+t_S r$, 
with probability at least $\delta$ we can move from $v$ to $u$ in at most $t_S$ steps without 
changing the $k$-prefix.
This holds specifically for $|v| \geq n-r > k+t_S r$ by our choice of $n$ (as long as $\ell \geq t_S$).

Consider the graph $\Gamma_{n}(\Lambda,\edge)$.  Recall that it contains many copies 
of $\Lambda$ (glued to the appropriate leaves).  Let 
$\Lambda_1,\ldots,\Lambda_m$ be the list of these copies, and denote by 
$\edge_j = (x_j,y_j)$ the corresponding copies of $\edge$ in each.
Define $Y = \{ y_1, \ldots, y_m \}$.

Now, define stopping times
$$ U_m = \inf \{ t \ : \  |\tilde Z_t| < m  \}  \qquad \textrm{ and } \qquad 
T =\inf \{ t \ : \ \tilde Z_t \in Y \}  $$
(where $\inf \emptyset = \infty$).
Any vertex $v$ in $\Gamma_n (\Lambda, \edge)$ of depth 
$|v| < n$ must have some copy of $(\Lambda,\edge)$ in its shadow $\shd(v)$.
So there exists some path of length at most $n-|v|+1$ 
from $v$ into some copy of $(\Lambda,\edge)$, ending in some vertex in $Y$.  
If $|v| \geq n-r$ then we can use the strong Markov property at the first time the walk gets
to some vertex $u$ with $|u| < n$.  Since this $u$ must have $|u| \geq n-r$,
we obtain that for any $|v| \geq n-r$,
\begin{align*}
\Pr [ T < U_{n- r} \ | \ \tilde Z_0 = v ] \geq \delta^{r+1} .
\end{align*}
By $\mu$-transience of $\Lambda$, we have that starting from any $y_j \in Y$,
with probability at least $\beta$ the walk never crosses $(y_j,x_j)$, 
and so never leaves $\Lambda_j$ 
(and thus always stays at distance at least $n$ from the root).
We conclude that for any $|v| \geq n-r$,
\begin{align*}
\Pr [  \pref_k(\tilde Z_\infty) & = \pref_k(\tilde Z_0)  \ | \ \tilde Z_0 = v ] 
\geq \Pr [  U_{n-r} = \infty  \ | \ \tilde Z_0 = v ] \\
& \geq 
\Pr [ T < U_{n-r} \ | \ \tilde Z_0 = v ] \cdot \inf_{y_j \in Y} 
\Pr [  \forall t \ , \ \tilde Z_t \in \Lambda_j  \ | \ \tilde Z_0 = y_j ] 
\\
& \geq \delta^{r+1} \beta .
\end{align*}
(where we have used that $k+r < n-r$).
Combining this with \eqref{eqn:tree fixing}, %since $n-r - t_S r > k$, 
using the strong Markov property at time $\tau$,
we obtain that for any $|v| > k$,
\begin{align}
\label{eqn:pref infty}
\Pr [ \pref_k(\tilde Z_\infty) & = \pref_k(v) \ | \ \tilde Z_0 = v ]
%\nonumber  \\
%& \geq 
%\inf_{|u| \geq n-r} \Pr [ U_{n-r} = \infty  \  | \ \tilde Z_0 = u ]
%\cdot \eta 
\geq \eta \delta^{r+1} \beta . 
\end{align}

{\bf Step II.}
Now, define the events 
$$ A_j = \{ \pref_k(\tilde Z_\infty) \neq \pref_k( \tilde Z_{U_{n- j r} } ) \} . $$
When $\tilde Z_0 = v$ for $|v| \geq n = k + (\ell+2) r + 1$ %%%%%%%%%%%%%%%
the event 
$\{ \pref_k(\tilde Z_\infty) \neq \pref_k(\tilde Z_0) \}$ implies that
$$ U_{|v|-r} < U_{|v|-2r} < \cdots < U_{|v|- \ell r} < U_{k+ r}  < \infty . $$
But, at time $U_{|v|-jr}$ we have that 
$$ | \tilde Z_{U_{|v|-jr} } | \geq | \tilde Z_{U_{|v|-jr}-1 } | - r \geq |v|-(j+1)r > k + r . $$
Thus, we have by \eqref{eqn:pref infty}, using the strong Markov property 
at time $U_{|v|-jr}$,
$$ \Pr [ A_{j+1} \ | \ \tilde Z_0 , \ldots, \tilde Z_{U_{|v|-jr} } ] \leq 1 - \eta \delta^{r+1} \beta , $$
implying that 
$$ \Pr [ A_{j+1} \ | \ (A_1)^c \cap \cdots \cap (A_j)^c ] \leq 1- \eta \delta^{r+1} \beta . $$
Thus, as long as $|v| \geq n$ we have that
\begin{align*}
\Pr [ \pref_k(\tilde Z_\infty) & \neq \pref_k (v) \ | \ \tilde Z_0 = v ] \leq 
\Pr [ A_1 \cup \cdots \cup A_{\ell} \ | \ \tilde Z_0 = v ]  
% \\ &
\leq (1-\eta \delta^{r+1} \beta)^{\ell} .
\end{align*}
Choosing $\ell$ such that $(1-\eta \delta^r \beta)^\ell < \eps$, 
we obtain that for any $|v| \geq n$ 
$$ \Pr [ \pref_k(\tilde Z_\infty) \neq \pref_k(v) \ | \ \tilde Z_0 = v ] < \eps . $$
That is, the graph $\Gamma_{n}(\Lambda,\edge)$ is $(k,n , 1-\eps)$-fixing.
\end{proof}

\begin{remark} \label{rem:finite supp}
Proposition \ref{prop:trans and fix} is the only place we require the measure $\mu$
to have finite support.  Indeed, we believe that the proposition should hold with 
weaker assumptions on $\mu$, perhaps even only assuming that $\mu$ is generating and has
finite entropy.  
If this is the case, we could obtain full realization results for the free group for such measures.
\end{remark}

\subsection{Local properties}
\label{scn:local}

Recall from Section \ref{scn:empty-core}
that given a subgroup $K \leq \freegroup$ we define
$\norm{g}$ as the number of cosets of the normalizer $b \in \n \backslash G$ for which 
$g \not\in K^b$.
Recall also that $\core_\emptyset(K) = \{ g \ : \norm{g} < \infty \}$.

In this subsection we will provide conditions on $K$ under which $\core_\emptyset(K)$ contains 
a given subgroup of $\freegroup$.  This will be useful in determining 
properties of $\freegroup / \core_\emptyset(K)$ (specifically, 
whether $\freegroup / \core_\emptyset(K)$
is nilpotent).

First, some notation:
Let $g \in \freegroup$.  Let $g = w_1 \cdots w_{|g|}$  be a reduced word with 
$w_i \in S$ for all $i$.
Let $\Lambda$ be a Schreier graph with corresponding subgroup $K \leq \freegroup$.
Fix a vertex $v$ in $\Lambda$.
Let $v_0(g)=v$ and inductively define $v_{n+1}(g)$ to be the unique vertex of $\Lambda$
such that $(v_n(g) , v_{n+1}(g))$ is labeled by $w_{n+1}$.
In other words, if $v = K \gamma$ then $v_i(g) = K \gamma w_1 \cdots w_i$.
%The \define{radius} of $g$ in $\Lambda$ started at $v$ 
%is defined to be $\rad_v(g) = \rad_{\Lambda,v}(g) = \max \{ | v_j(g) | \ | \ j \leq |g| \}$
%(recall that $|u|$ is the distance of $u$ to the root vertex in $\Lambda$).

Recall that $g \in K$ if and only if for $v=o$ the root of $\Lambda$. Hence,
for any $g\in K$, we have $v_{|g|}(g) = v_0(g)$.
From this we can deduce that when $v = K \gamma$, then 
$$ v_{|g|}(g) = v_0(g) \iff \gamma g \gamma^{-1} \in K \iff g \in K^\gamma . $$

\begin{definition}
Let $\Gamma_1, \ldots, \Gamma_n$ 
be some Schreier graphs with roots $\rho_1 ,\ldots, \rho_n$.  
Let $B_\Lambda(v,r)$ be the ball of radius $r$
around $v$ in the graph $\Lambda$ (and similarly for $\Gamma_j$).
We say that $\Lambda$ (or sometimes $K$) is \define{locally-$(\Gamma_1,\ldots,\Gamma_n)$} if 
for any $r>0$ there exists $R>0$ such that for all vertices $v$ in $\Lambda$ with $|v|>R$
we have that the ball $B_\Lambda(v,r)$ 
is isomorphic (as a labeled graph) to at least 
one of the balls $B_{\Gamma_1}(\rho_1,r) , \ldots, B_{\Gamma_n}(\rho_n,r)$.
\end{definition}

That is, a subgroup $K$ is locally-$(\Gamma_1,\ldots, \Gamma_n)$ if, after
ignoring some finite area, locally we see one of the graphs $\Gamma_1, \ldots, \Gamma_n$.

The main purpose of this subsection is to prove:
\begin{prop}
\label{prop:local}
Let $K \leq \freegroup$.  
Let $G_1 , \ldots, G_n \leq \freegroup$ be subgroups
with corresponding Schreier graphs $\Gamma_1 , \ldots, \Gamma_n$.

If $K$ is locally-$(\Gamma_1, \ldots, \Gamma_n)$ then 
$G_1 \cap \cdots \cap G_n \leq \core_\emptyset(K)$.
\end{prop}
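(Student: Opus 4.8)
The plan is to show directly that every $g \in G_1 \cap \cdots \cap G_n$ satisfies $\norm{g} < \infty$, which is precisely the assertion $g \in \core_\emptyset(K)$. Recall that $\norm{g} = |\mho_g|$ counts the cosets $\theta \in \n \backslash G$ with $g \notin K^\theta$, and that a vertex $v = K\gamma$ of the Schreier graph $\Lambda$ of $K$ represents the coset $\theta = \n\gamma$ and the conjugate $K^\gamma = K^\theta$; moreover, as recorded just before the statement, $g \in K^\gamma$ if and only if the $g$-walk from $v$ returns to $v$, i.e.\ $v_{|g|}(g) = v_0(g)$. In particular, whether a vertex $K\gamma$ is \emph{good} (the $g$-walk returns) depends only on its coset $\theta = \n\gamma$, and $\theta \in \mho_g$ exactly when the vertices in its fibre are bad. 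Since the natural map $K\backslash G \to \n\backslash G$ is onto, $\mho_g$ is the image of the set of bad vertices, so $|\mho_g|$ is at most the number of bad vertices. Hence it suffices to prove that only finitely many vertices of $\Lambda$ are bad.

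First I would invoke the hypothesis that $K$ is locally-$(\Gamma_1,\ldots,\Gamma_n)$ with the choice $r = |g|$: this yields an $R>0$ such that for every vertex $v$ with $|v| > R$ the ball $B_\Lambda(v,|g|)$ is isomorphic, as a rooted labeled graph (center $v$ corresponding to the center $\rho_j$), to one of the balls $B_{\Gamma_j}(\rho_j,|g|)$. The crucial observation is that the entire $g$-walk $v = v_0(g), v_1(g), \ldots, v_{|g|}(g)$ stays inside $B_\Lambda(v,|g|)$, since $v_i(g)$ lies at distance at most $i \le |g|$ from $v$, and each traversed edge joins two vertices of this ball. An isomorphism $\phi$ sending $v$ to $\rho_j$ preserves edge labels and orientations, and in a Schreier graph of $\freegroup$ each generator determines a unique outgoing and a unique incoming edge at every vertex; consequently $\phi$ commutes with the ``apply a generator'' operation, so by induction $\phi(v_i(g)) = (\rho_j)_i(g)$ for all $i$.

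The intersection hypothesis now enters at a single point: since $g \in G_j$, the $g$-walk from the root $\rho_j$ of $\Gamma_j$ returns, that is $(\rho_j)_{|g|}(g) = \rho_j$. Transporting this back through $\phi^{-1}$ gives $v_{|g|}(g) = \phi^{-1}(\rho_j) = v$, so $v$ is good. Thus every vertex with $|v| > R$ is good, and all bad vertices lie in the ball $B_\Lambda(o,R)$, which is finite because the Schreier graph of $\freegroup$ is $4$-regular. Therefore $\mho_g$, being the image of a finite set, is finite, so $\norm{g} < \infty$ and $g \in \core_\emptyset(K)$, completing the argument.

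I expect the main care to lie not in any single estimate but in the bookkeeping linking the two index sets: the norm $\norm{g}$ is defined over cosets $\theta \in \n\backslash G$, whereas the local condition lives on the vertex set $K\backslash G$, and one must check that goodness descends to cosets so that counting bad vertices controls $\norm{g}$. The other delicate point is to ensure the ball isomorphism is genuinely rooted (sends $v$ to $\rho_j$) and that the full length-$|g|$ walk, together with every edge it uses, remains inside the radius-$|g|$ ball, so that the walk can be transported faithfully; once these are secured, the use of $g \in G_j$ to force a return is immediate.
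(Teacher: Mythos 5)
Your proof is correct and follows essentially the same route as the paper's: take $r=|g|$, use the local hypothesis to transport the $g$-walk through the rooted ball isomorphism $B_\Lambda(v,|g|) \cong B_{\Gamma_j}(\rho_j,|g|)$, and conclude that all vertices outside a finite ball satisfy $g \in K^\gamma$, whence $\norm{g}<\infty$. Your explicit bookkeeping passing from bad vertices in $K\backslash \freegroup$ to cosets in $\n \backslash \freegroup$ is a point the paper's proof treats only implicitly, but it is not a difference in approach.
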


\begin{proof}
Let $g \in G_1 \cap \cdots \cap G_n$.
We need to prove that $|| g ||_K < \infty$.

Let $\Lambda$ be the Schreier graph of $K$.
Let $r = |g|$.  Let $R$ be large enough so that for $|v|>R$ in $\Lambda$
the ball $B_\Lambda(v,r)$ is isomorphic to one of $B_{\Gamma_j}(\rho_j,r)$
(where as before, $\rho_j$ is the root vertex of $\Gamma_j$).

Let $|v| > R$ and 
let $j$ be such that $B_{\Lambda}(v,r)$ is isomorphic to $B_{\Gamma_j}(\rho_j,r)$.
Set $\Gamma=\Gamma_j, \rho = \rho_j$.

Consider the paths $\rho = \rho_0(g) , \ldots, \rho_{r}(g)$ in $\Gamma$ and 
$v=v_0(g)  , \ldots, v_{r}(g)$ in $\Lambda$.
Since these each sit in $B_\Gamma(\rho,r)$ and $B_\Lambda(v,r)$ respectively, 
we have that $\rho_r(g) = \rho_0(g)$ if and only if $v_r(g) = v_0(g)$.
Thus, $g \in G_j$ implies that $g \in K^\gamma$ where $v = K \gamma$.

In conclusion, we have shown that if $\gamma$ is such that $| K \gamma| > R$ 
then $g \in K^\gamma$.  
Hence, there are only finitely many $\gamma \in \freegroup$ such that $g \not\in K^\gamma$,
which implies that $|| g ||_K < \infty$.
\end{proof}

\subsection{Full realization}
\label{scn:full real}

In this subsection we prove Theorem \ref{thm:main-free}.

In light of Corollary \ref{cor:sequence-of-subgroups} and
Proposition \ref{prop:growing fixing general},
in order to prove full realization for $\freegroup$,
we need to find a sequence of subgroups $K_n$ with Schreier graphs $\Gamma_n$ such that
the following properties hold:
\begin{itemize}
\item $\Gamma_n$ is $(k_n,k'_n,\alpha_n)$-fixing, with $k_n \to \infty$ and $\alpha_n \to 1$.
\item The subgroups $\nO[K_n]$ are co-nilpotent in $\freegroup$.
\end{itemize}

To show the first property, we will use the gluing construction from Proposition \ref{prop:trans and fix},
by finding a suitable marked pair $(\Lambda,\edge)$ that is $\mu$-transient.

\begin{lemma}
\label{lem:locally co-nilpotent}
Let $N$ be a normal co-nilpotent subgroup in $\freegroup$, and let $\Lambda$ be
its associated Schreier graph. Choose some edge $\edge$ in $\Lambda$ labeled with $s \in S$ and
consider $(\Lambda,\edge)$ as a marked pair.
For any $n>0$ let $K_n \leq \freegroup$ be the subgroup corresponding to the root 
of the Shcreier graph $\Gamma_n(\Lambda,\edge)$.

Then, for any $n>0$, the normal subgroup $\core_\emptyset (K_n)$ is co-nilpotent in $\freegroup$.
\end{lemma}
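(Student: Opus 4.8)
The plan is to sandwich a concrete normal subgroup of $\freegroup$ below $\nO[K_n]$ using Proposition~\ref{prop:local}, and then to check that the quotient by this smaller subgroup is nilpotent. To set up Proposition~\ref{prop:local} I first identify the local structure of $\Gamma_n(\Lambda,\edge)$ far from its root. By construction this graph is the finite tree $(\freegroup)_{\leq n}$ with finitely many copies of $\Lambda$ and of the graphs $\mathcal{N}_\xi$, $\xi\in S$, glued to its leaves; since $n\geq 1$, all four labels $\xi$ occur among the leaves. Each glued copy is attached to the rest of the graph through a single leaf (all leaves being at distance exactly $n$ from the root), and each copy is infinite. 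Hence, for every $r$ there is $R$ (depending on $n$ and $r$) such that any vertex $v$ with $|v|>R$ lies at distance more than $r$ inside one glued copy, so the ball $B_{\Gamma_n}(v,r)$ is contained in that copy and is unaffected by the edge-surgery performed during gluing. Thus every such ball is isomorphic to a ball inside an unmodified copy of either $\Lambda$ or some $\mathcal{N}_\xi$, with no ball straddling two pieces or reaching back into the tree.

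Next I would upgrade these local pictures to balls around the roots of genuine Schreier graphs, which is where the hypotheses on $N$ enter. Since $N$ is normal, re-rooting $\Lambda$ at any vertex returns the Schreier graph of $N^\gamma=N$, so $\Lambda$ is vertex-transitive and a ball around any of its vertices is isomorphic to one around its root. Likewise, far from its endpoint the half-line $\mathcal{N}_a$ (and $\mathcal{N}_{a^{-1}}$) looks like the bi-infinite $a$-line carrying $b$-self-loops, which is exactly the (vertex-transitive, since normal) Schreier graph $\Lambda_a$ of $H_a:=\ker(\freegroup\to\Z,\ a\mapsto 1,\ b\mapsto 0)$; symmetrically $\mathcal{N}_b,\mathcal{N}_{b^{-1}}$ yield the Schreier graph $\Lambda_b$ of $H_b:=\ker(\freegroup\to\Z,\ a\mapsto 0,\ b\mapsto 1)$. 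Combining this with the previous paragraph shows that $K_n$ is locally-$(\Lambda,\Lambda_a,\Lambda_b)$.

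Proposition~\ref{prop:local}, applied with these three reference graphs, then gives $N\cap H_a\cap H_b\leq \nO[K_n]$. The rest is routine algebra: an element lies in $H_a\cap H_b$ iff both its $a$- and $b$-exponent sums vanish, so $H_a\cap H_b=[\freegroup,\freegroup]$ and $N\cap H_a\cap H_b=N\cap[\freegroup,\freegroup]$ is normal in $\freegroup$. The diagonal map realizes $\freegroup/(N\cap[\freegroup,\freegroup])$ as a subgroup of $(\freegroup/N)\times(\freegroup/[\freegroup,\freegroup])=(\freegroup/N)\times\Z^2$; as $\freegroup/N$ is nilpotent by hypothesis and $\Z^2$ is abelian, this product is nilpotent, hence so is its subgroup $\freegroup/(N\cap[\freegroup,\freegroup])$. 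Finally, $N\cap[\freegroup,\freegroup]\leq\nO[K_n]$ with both normal in $\freegroup$, so $\freegroup/\nO[K_n]$ is a quotient of $\freegroup/(N\cap[\freegroup,\freegroup])$, and quotients of nilpotent groups are nilpotent. Thus $\nO[K_n]$ is co-nilpotent.

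The main obstacle is the first, geometric, step: carefully verifying that outside a large enough ball every neighborhood in $\Gamma_n(\Lambda,\edge)$ is a clean copy of a neighborhood in $\Lambda$ or in some $\mathcal{N}_\xi$. This is bookkeeping of the gluing construction — using that there are only finitely many gluing leaves, all at distance exactly $n$ from the root, and that each glued copy is infinite and singly-attached — but it is the step that genuinely exploits the explicit geometry of $\Gamma_n(\Lambda,\edge)$, whereas the algebraic conclusion afterward is immediate.
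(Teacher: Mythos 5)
Your proof is correct and takes essentially the same route as the paper: identify the clean local models of $\Gamma_n(\Lambda,\edge)$ far from the root, apply Proposition~\ref{prop:local} to sandwich $N\cap[\freegroup,\freegroup]$ (equivalently $N\cap\ker\varphi_a\cap\ker\varphi_b$) below $\core_\emptyset(K_n)$, and conclude co-nilpotency. The only cosmetic differences are that the paper uses the four bi-infinite reference graphs $\mathcal{Z}_s$ and finishes via a lower central series term $G_m\leq N\cap[\freegroup,\freegroup]$ (so $\freegroup/\core_\emptyset(K_n)$ is a quotient of the nilpotent $\freegroup/G_m$), whereas you consolidate to two line graphs and embed $\freegroup/(N\cap[\freegroup,\freegroup])$ into $(\freegroup/N)\times\Z^2$ — equivalent arguments, and your explicit verification of the locality condition (including vertex-transitivity of $\Lambda$ from normality of $N$) carefully fills in what the paper declares ``immediate,'' with the harmless caveat that your phrase ``each glued copy is infinite'' is unnecessary (and false if $N$ has finite index) but plays no essential role.
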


\begin{proof}
Let $G_0 = \freegroup,  G_{j+1} = [G_j,\freegroup]$ be the descending central series of $\freegroup$.
Since $N$ is co-nilpotent, there exists $m$ such that $G_m \lhd N$.

By definition of $\Gamma_n(\Lambda,\edge)$, outside a ball of radius $n$ in $\Gamma_n(\Lambda,\edge)$,
we only have glued copies of $\Lambda$ or of $\mathcal{N}_s$ for
$s\in S = \{a,a^{-1},b,b^{-1}\}$ (recall Subsection \ref{sec:gluing-graphs}).

Let $\mathcal{Z}_s$ denote the Schreier graph with vertices in $\Z$, edges $(x+1,x)$ labeled by $s$
and a self loop with label $\xi \not\in \{s,s^{-1} \}$ at each vertex.
Let $\varphi_s : \freegroup \to \Z$ be the homomorphism defined via $s \mapsto -1, s^{-1} \mapsto 1$
and $\xi \mapsto 0$ for $\xi \not\in \{s,s^{-1} \}$.
Then, the subgroup corresponding to $\mathcal{Z}_s$ is $\ker \varphi_s$.
Since $\freegroup / \ker \varphi_s$ is abelian, we have that $G_m \lhd 
[\freegroup , \freegroup] \lhd \ker \varphi_s$.

It is now immediate that $K_n$ is 
locally-$(\Lambda , \mathcal{Z}_a ,  \mathcal{Z}_{a^{-1} }, \mathcal{Z}_b , \mathcal{Z}_{b^{-1} } )$.
By Proposition \ref{prop:local}, this implies that 
$$ G_m \leq N \cap \bigcap_{s \in S} \ker \varphi_s \leq \core_\emptyset (K_n) . $$
Thus, $\core_\emptyset (K_n)$ is co-nilpotent.
\end{proof}

We are now ready to prove our main result, Theorem \ref{thm:main-free}.

\begin{proof}[Proof of Theorem \ref{thm:main-free}]
By Corollary \ref{cor:sequence-of-subgroups},
Proposition \ref{prop:growing fixing general} and Lemma \ref{lem:locally co-nilpotent}
it suffices to find $N \lhd \freegroup$ such that $\freegroup / N$ is nilpotent and 
transient.  
Since any $2$-generated nilpotent group is a quotient of $\freegroup$, it suffices 
to find a $2$-generated nilpotent group, whose Cayley graph is transient,
and then take $N$ to be the corresponding kernel of the canonical projection from $\freegroup$ onto our nilpotent group.  

It is well known that recurrent subgroups must have 
at most quadratic volume growth (this follows for instance from combining the 
Coulhon-Sallof-Coste inequality \cite{CSC}
and the method of evolving sets \cite{evolv_sets}, 
see {\em e.g.}\ \cite[Chapters 5 \& 8]{Gabor}).
%Gromov's Theorem implies that any recurrent group must then be virtually $\Z^2$ or virtually $\Z$ or finite.
%Thus, it suffices to find $K \lhd \freegroup$ that is co-nilpotent and not-virtually-Abelian.
Thus, it suffices to find a $2$-generated nilpotent group that has volume growth larger than quadratic.
%$K \lhd \freegroup$ such that $\freegroup / K$
%has volume growth larger than quadratic, but is nilpotent.
This is not difficult, and there are many possibilities.

For a concrete example, we may consider take $\mathbb{H}_3(\Z)$, the Heisenberg group (over $\Z$).
This is the group whose elements are $3 \times 3$ matrices of the form
$\left[ \begin{smallmatrix} 
1 & a & c \\
0 & 1 & b \\
0 & 0 & 1
\end{smallmatrix} \right]$ with $a,b,c \in \Z$.
It is well known that $\mathbb{H}_3(\Z)$ is two-step nilpotent, with 
the commutator subgroup $[\mathbb{H}_3(\Z),\mathbb{H}_3(\Z)]$ containing matrices of the form 
$\left[\begin{smallmatrix} 
1 & 0 & c \\
0 & 1 & 0 \\
0 & 0 & 1
\end{smallmatrix}\right]$.
It follows that  $[ [\mathbb{H}_3(\Z),\mathbb{H}_3(\Z)],\mathbb{H}_3(\Z)] = \{1\}$ 
and $\mathbb{H}_3(\Z) / [\mathbb{H}_3(\Z),\mathbb{H}_3(\Z)] \cong \Z^2$.
This structure shows that $\mathbb{H}_3(\Z)$ has volume growth at least like a degree $3$ polynomial
(in fact the volume growth is like $r^4$), and thus must be transient.

Finally, $\mathbb{H}_3(\Z)$ is generated as a group by two elements $\left[\begin{smallmatrix} 
1 & 1 & 0 \\
0 & 1 & 0 \\
0 & 0 & 1
\end{smallmatrix}\right]$
and $\left[\begin{smallmatrix} 
1 & 0 & 0 \\
0 & 1 & 1 \\
0 & 0 & 1
\end{smallmatrix}\right]$, so can be seen as a quotient of $\freegroup$,
{\em i.e.}\ $\mathbb{H}_3(\Z) \cong \freegroup / N$ for some $N$.
This $N$ will satisfy our requirements, and this finishes the proof for $\freegroup$.

Let us note that 
for $\freegroup[r]$ with $r>2$ the construction is even simpler: 
Since $\Z^r$ is abelian and transient for any $r>2$, 
we can just take $N$ to be the commutator subgroup of $\freegroup[r]$. 
\end{proof}

\subsection{Proof of Theorem \ref{thm:free group no gap}}

\begin{proof}[Proof of Theorem \ref{thm:free group no gap}]
Let $N$ be a normal co-nilpotent subgroup in $\freegroup$, and let $\Lambda$ be
its associated Schreier graph. Choose some edge $\edge$ in $\Lambda$ labeled with $s \in S$ and
consider $(\Lambda,\edge)$ as a marked pair.
For any $n>0$ let $K_n \leq \freegroup$ be the subgroup corresponding to the root 
of the Shcreier graph $\Gamma_n(\Lambda,\edge)$, where 
$\Gamma_n(\Lambda,\edge)$ is defined in Section \ref{sec:gluing-graphs}.

We have already seen in the proof of Lemma  \ref{lem:locally co-nilpotent}
that $\core_\emptyset(K_n)$ is co-nilpotent in $\freegroup$, so the entropy function
is continuous and the interval of values 
$[0, c]$ is realizable for $c=\rweq{\nI[\s_n]}]$, 
by Corollary \ref{cor:single-subgroup}.

So we only need to prove that $\rweq{\nI[\s_n]} >0$ for some $n$.
(Of course we proved that when $\mu$ has finite support this converges to 
the random walk entropy, but here we are dealing with any finite entropy 
generating probability measure $\mu$, not necessarily with finite support.)

By the well known Kaimanovich-Vershik entropy criterion \cite{kaimanovich1983random}, 
if there exists a non-constant bounded harmonic function on the Schreier graph 
$\Gamma_n(\Lambda,\edge)$,
then the entropy $\rweq{\nI[\s_n]}$ is positive.

If the graph $(\Lambda,\edge)$ is transient, then 
considering the random walk on $\Gamma_n(\Lambda,\edge)$, 
for each glued copy of $(\Lambda,\edge)$ the random walk has 
positive probability to eventually end up in that copy.  
In other words,  there exists $0< \alpha < 1$ such that 
for any $|v| = n$ we have 
$$ \Pr [ \exists \ t_0 \ : \ \forall \ t > t_0  \  \tilde Z_t \in \shd(v) ] \geq \alpha . $$
Thus, for a fixed $|v|=n$, the function 
$$ h(g) = h_v(g) := \Pr_g [ \exists \ t_0 \ : \ \forall \ t > t_0  \  \tilde Z_t \in \shd(v) \ | \  \tilde Z_0 = g ] $$
is a non-constant bounded harmonic function on $G$.

Finding a transient Schreier graph $(\Lambda,\edge)$ with corresponding subgroup $N \lhd \freegroup$
which is co-nilpotent, can be done exactly as in the proof of Theorem  \ref{thm:main-free}.
\end{proof}

\bibliography{bib_iirss}

\end{document}